\theoremstyle{plain}
\newtheorem{theorem}{Theorem}[section]
\newtheorem{corollary}[theorem]{Corollary}
\newtheorem{lemma}[theorem]{Lemma}
\newtheorem{proposition}[theorem]{Proposition}
\theoremstyle{definition}
\newtheorem{example}[theorem]{Example}
\newtheorem{remark}[theorem]{Remark}
\def\ld{\backslash}
\def\ker#1{\mathrm{ker}(#1)}
\def\Ker#1{\mathrm{Ker}(#1)}
\def\aut#1{\mathrm{Aut}(#1)}
\def\Aut#1{\mathrm{Aut}(#1)}
\def\End#1{\mathrm{End}(#1)}
\def\aff#1{\mathrm{Aff}#1}
\def\Aff#1{\mathrm{Aff}#1}
\def\lmlt{\mathrm{LMlt}}
\def\dis{\mathrm{Dis}}
\def\Sym{\mathrm{Sym}}
\def\Con{\mathrm{Con}}
\def\eps{\varepsilon}
\def\setof#1#2{\{#1\, : \,#2\}}
\def\Z{\mathbb Z}
\def\Q{\mathcal Q}
\def\TC{\mathrm{TC}}
\def\O{\mathcal O}
\def\N{\mathrm{Norm}}
\def\con#1{\mathrm{con}_{#1}}
\numberwithin{equation}{section}
\def\cg#1{\equiv_\alpha}
\newcommand*\xbar[1]{%
   \hbox{%
     \vbox{%
       \hrule height 0.5pt 
       \kern0.5ex
       \hbox{%
         \kern-0.1em
         \ensuremath{#1}%
         \kern-0.1em
       }%
     }%
   }%
} 
\title{Commutator theory for racks and quandles}
\author{Marco Bonatto} 
\address[Bonatto]{IMAS-CONICET, University of Buenos Aires, Argentina}
\email[Stanovsk\'y]{marco.bonatto.87@gmail.com}
\author{David Stanovsk\'y}
\address[Stanovsk\'y]{Department of Algebra, Faculty of Mathematics and Physics, Charles University, Prague, Czech Republic}
\email[Stanovsk\'y]{stanovsk@karlin.mff.cuni.cz}
\begin{document}

\thanks{Research partly supported by the GA\v CR grant 18-20123S}

\keywords{Quandles and racks, solvability, nilpotence, commutator theory, left distributive quasigroups.}

\subjclass[2000]{Primary: 20N02, 57M27. Secondary: 08A30, 20N05.}
\date{\today}

\begin{abstract}
We adapt the commutator theory of universal algebra to the particular setting of racks and quandles, exploiting a Galois connection between congruences and
certain normal subgroups of the displacement group. Congruence properties, such as abelianness and centrality, are reflected by the corresponding relative displacement groups, and the global properties, solvability and nilpotence, are reflected by the properties of the whole displacement group. To show the new tool in action, we present three applications: non-existence theorems for quandles (no connected involutory quandles of order $2^k$, no latin quandles of order $\equiv2\pmod4$), a non-colorability theorem (knots with trivial Alexander polynomial are not colorable by solvable quandles; in particular, by finite latin quandles), and a strengthening of Glauberman's results on Bruck loops of odd order.
\end{abstract}

\maketitle

\section{Introduction}\label{sec:intro}

\subsection{Motivation}

The primary motivation for the development of the theory of racks and quandles comes from constructions of knot invariants \cite{CESY,EN,J}, describing set-theoretic solutions to the quantum Yang-Baxter equation \cite{EGS,ESS}, constructions of Hopf algebras \cite{AG}, or the abstract theory of quasigroups and loops \cite{Sta-latin}. In the present paper, we develop the concepts of \emph{abelianness} and \emph{centrality}, and the derived concepts of \emph{solvability} and \emph{nilpotence}, for racks and quandles, by adaptation of the general commutator theory of universal algebra \cite{FM} to the particular setting of racks. We aim at new tools to be used in a deeper study of rack theory and its applications.

The \emph{commutator theory}, as developed in universal algebra, originated in the 1970's works of Smith \cite{Smith-mv}, 
and culminated in the Freese-McKenzie monograph \cite{FM}, expanding the scope from Mal'tsev varieties to congruence modular varieties and beyond. The initial ideas developed into a rather deep theory that proved immensely useful in solving various problems of universal algebra and combinatorics of functions, see \cite{MS} for references. 
We also refer to \cite{SV1,SV2} for a successful adaptation of the commutator theory to quasigroups and loops (``non-associative groups"), which was an inspiration for the present work.

Quandles do not form a congruence modular variety, hence one cannot expect the best behavior of the congruence commutator. Nevertheless, the derived notions of abelianness and centrality, and subsequently solvability and nilpotence, are important structural concepts in any algebraic structure \cite{TCT}. The main idea behind our paper is that, in racks, they are well reflected inside the \emph{displacement group} of rack, thus allowing to use group-theoretical arguments to prove theorems about racks. The main results are stated in Section \ref{s:main_results}, and their proof occupies a major part of the paper. Several simple applications of the new tool are presented in Section 8, and more
involved results will be the subject of subsequent papers \cite{BB,BP,LSS,Nagy}.

The strong interplay between congruences of a rack and normal subgroups of its displacement group has been noticed already at the very dawn of quandle theory by Nagao \cite{Nag}, and later rediscovered and developed in various forms, for example, \cite{BLRY,Joyce-simple}. They all inspired our approach which is based on the Galois connection given by the operators $\dis_\alpha$ and $\con N$, developed in Section \ref{s:3}.

Our results are also relevant in the context of the general theory of quasigroups and loops. Latin quandles are also known as \emph{left distributive quasigroups} and have been studied extensively even before the quandle theory was born, see \cite{Sta-latin} for a survey. 
The concept of solvability was introduced on several independent occasions. Formal definitions were somewhat different, but all shared the property that solvable latin quandles had solvable left multiplication groups. Here we show that the latter property is equivalent to solvability in the sense of universal algebra, and using Stein's theorem \cite{SteA} we conclude that all finite latin quandles are solvable. Principal isotopy translates the results into the setting of loops. Using the Belousov-Onoi correspondence \cite{BO}, we obtain that Bruck loops of odd order are solvable in the sense of universal algebra, thus strengthening Glauberman's theorem \cite{G2} whose English statement was identical, however, using a weaker definition of solvability (see \cite{SV1} for a discussion). 
This is one of the rare examples when a result about loops is derived from the properties of their quasigroup isotopes, turning the usual flow of ideas. 

Our results are also relevant in the context of the theory of \emph{supernilpotence} \cite{AM}.
This is a stronger property than nilpotence, based on Bulatov's commutators of higher arity. A theorem by Kearnes \cite[Theorem 3.14]{Kea} says that a finite algebraic structure with a Mal'tsev term (a quasigroup, or a latin quandle, in particular) is supernilpotent if and only if it is a direct product of nilpotent algebras of prime power size. In this context, our Theorem \ref{t:nilpotent_prime_decomposition} and Corollary \ref{c:bruck} state that, for finite latin quandles and for Bruck loops of odd order, nilpotence is equivalent to supernilpotence.

Some of the properties of the connection between congruences and normal subgroups hold more generally, in any binary algebraic structure with bijective left translations, so called \emph{left quasigroup}. Our motivation for general formulation comes from the quantum Yang-Baxter equation: the non-degenerate set-theoretic solutions can be interpreted as a pair of left quasigroups, and in some cases, as a single left quasigroup. One example is the class of racks, and another one is the class of involutive solutions in Rump's notation \cite{Rump}. Some of our concepts are immediately useful in this context.

\subsection{Main results}\label{s:main_results}

Let $Q$ be a rack. We define the \emph{displacement group} of $Q$ by $\dis(Q)=\langle L_a L_b^{-1}: \, a,b\in Q\rangle$ where $L_u(x)=u*x$ is the left translation by $u$. For a congruence $\alpha$ of $Q$, we define \emph{the displacement group relative to $\alpha$} by $\dis_\alpha=\langle L_a L_b^{-1}: \, a\,\alpha\,b\rangle$. See Sections \ref{s:2} and \ref{s:3} for details. 

Let $\alpha$ be an equivalence on a set $X$ and let a group $G$ act on $X$. We call the action \emph{$\alpha$-semiregular} if for every $g\in G$, if $g(a)=a$ then $g(b)=b$ for every $b\,\alpha\,a$. 

The first main theorem, proved in Section \ref{s:commutators}, characterizes abelian and central congruences by group-theoretic properties of the corresponding relative displacement groups.
The proof actually works for a larger class, properly containing all racks, so called \emph{ltt left quasigroups}. The ltt property is a syntactic condition that requires all terms to take a particular equivalent form, explained in Section \ref{s:ltt}.

\begin{theorem}\label{t:abelian,central}
Let $Q$ be an ltt left quasigroup and $\alpha$ its congruence. Then
\begin{enumerate}
	\item $\alpha$ is abelian if and only if the subgroup $\dis_\alpha$ is abelian and it acts $\alpha$-semiregularly on $Q$;
	\item $\alpha$ is central if and only if the subgroup $\dis_\alpha$ is central in $\dis(Q)$ and $\dis(Q)$ acts $\alpha$-semiregularly on $Q$.
\end{enumerate}
\end{theorem}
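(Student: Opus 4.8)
The strategy is to unwind the term condition that defines abelianness and centrality into statements about how $\dis(Q)$ acts on $Q$, using the ltt normal form. Recall that a congruence $\alpha$ is \emph{abelian} iff $C(\alpha,\alpha;0_Q)$ holds and \emph{central} iff $C(\alpha,1_Q;0_Q)$ holds; concretely, abelianness says that for every term $t(\bar u,\bar v)$ and all $\bar a\mathrel\alpha\bar b$, $\bar c\mathrel\alpha\bar d$,
\[
t(\bar a,\bar c)=t(\bar a,\bar d)\quad\Longrightarrow\quad t(\bar b,\bar c)=t(\bar b,\bar d),
\]
and centrality is the same statement with $\bar c,\bar d$ ranging over arbitrary tuples (the pair $\bar a,\bar b$ may be swapped, making this an equivalence). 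By the ltt normal form (Section~\ref{s:ltt}) every term is equivalent to one of the shape $t=w(\bar u,\bar v)\cdot z$, where $z$ is one of the variables and $w$ is a finite composition of left translations by the variables and their inverses; it therefore suffices to test the term condition on such $t$. Two facts will be used repeatedly: first, $\dis_\alpha\trianglelefteq\lmlt(Q)$, because conjugating a generator $L_aL_b^{-1}$ by $L_c$ gives $L_{c*a}L_{c*b}^{-1}$ and $c*a\mathrel\alpha c*b$; second, if $\bar p$ and $\bar q$ are coordinatewise $\alpha$-related then $w(\ldots,\bar p,\ldots)$ and $w(\ldots,\bar q,\ldots)$ differ by a right factor lying in $\dis_\alpha$ (each elementary replacement $L_{p_i}\mapsto L_{q_i}$ introduces a conjugate of $L_{p_i}^{-1}L_{q_i}\in\dis_\alpha$), whereas for arbitrary $\bar p,\bar q$ the factor lies only in $\dis(Q)$.

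For the forward implications I would exhibit explicit witnessing terms, all of them already in ltt form. Suppose $\alpha$ is abelian. Given $g,h\in\dis_\alpha$, write them as words $W_g,W_h$ in generators $L_xL_y^{-1}$ with $x\mathrel\alpha y$, and apply $C(\alpha,\alpha;0_Q)$ to $t=W_g(\bar v)^{-1}W_h(\bar u)^{-1}W_g(\bar v)W_h(\bar u)\cdot z$, letting $\bar u$ carry the data of $h$ and $\bar v$ the data of $g$ together with $z$, evaluated at some $e\in Q$. Substituting, into each block, either the genuine data or a ``collapsed'' version that makes $W_h$, respectively $W_g$, evaluate to the identity, three of the four evaluations become $e$ and the fourth equals $g^{-1}h^{-1}gh(e)$; choosing the substitutions so that the fourth one sits in the conclusion, the term condition forces $g^{-1}h^{-1}gh(e)=e$ for all $e$, i.e.\ $\dis_\alpha$ is abelian. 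For $\alpha$-semiregularity of $\dis_\alpha$: given $g\in\dis_\alpha$ with $g(e)=e$ and $e\mathrel\alpha e'$, apply $C(\alpha,\alpha;0_Q)$ to $t=W_g(\bar v)\cdot u_1$ with $\bar u=(u_1)$, taking $\bar a=(e)$, $\bar b=(e')$, letting $\bar c$ carry the data of $g$ and $\bar d$ its collapsed version; the hypothesis reads $g(e)=e$, true by assumption, so the conclusion $g(e')=e'$ follows. Centrality runs through the same two constructions, except that the data of $h$, respectively of $g$, now sits in the $1_Q$-block and so need not be $\alpha$-related; the asymmetry of $C(\alpha,1_Q;0_Q)$ prescribes which block is collapsed on which side. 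This yields $[\dis_\alpha,\dis(Q)]=1$, hence $\dis_\alpha\le Z(\dis(Q))$ since $\dis_\alpha\le\dis(Q)$, and $\alpha$-semiregularity of the whole group $\dis(Q)$.

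For the converse, assume $\dis_\alpha$ is abelian and acts $\alpha$-semiregularly, and take $t=w\cdot z$ with $\bar a\mathrel\alpha\bar b$ and $\bar c\mathrel\alpha\bar d$. Using the two facts above, write $w(\bar b,\bar c)=w(\bar a,\bar c)\,\gamma_u$ and $w(\bar a,\bar d)=w(\bar a,\bar c)\,\gamma_v$ with $\gamma_u,\gamma_v\in\dis_\alpha$; since $\dis_\alpha$ is abelian and normal, $\gamma_v$ is literally the same element whether computed in the $\bar a$- or the $\bar b$-context, and $w(\bar b,\bar d)=w(\bar a,\bar c)\,\gamma_u\gamma_v=w(\bar a,\bar c)\,\gamma_v\gamma_u$. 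If the distinguished variable $z$ belongs to the $\bar v$-block, then after cancelling $w(\bar a,\bar c)$ and, on the $\bar b$-side, $\gamma_u$, all four equalities in play reduce to the single condition that the $\bar c$-entry for $z$ equals $\gamma_v$ applied to the $\bar d$-entry for $z$, so the equivalence holds automatically. If $z$ belongs to the $\bar u$-block, the two sides reduce to the statements that $\gamma_v$ fixes the $\bar a$-entry for $z$, respectively the $\bar b$-entry for $z$; these entries are $\alpha$-related, so $\alpha$-semiregularity of $\dis_\alpha$ makes the two statements equivalent. The centrality case is identical with $\gamma_v\in\dis(Q)$: centrality of $\dis_\alpha$ in $\dis(Q)$ supplies the commuting of $\gamma_u$ and $\gamma_v$ and the context-independence of $\gamma_v$, and $\alpha$-semiregularity of $\dis(Q)$ disposes of the subcase $z\in\bar u$.

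The step I expect to be the genuine obstacle is the bookkeeping underlying the converse: proving carefully, for an arbitrary ltt word $w$, that coordinatewise-$\alpha$-related substitutions alter the value by a factor in $\dis_\alpha$, that this factor is insensitive to substitutions in the other block once $\dis_\alpha$ is known to be abelian (respectively central in $\dis(Q)$), and that the corrections coming from the $\bar u$- and $\bar v$-slots combine as claimed regardless of how those slots are interleaved inside $w$. This is precisely where the ltt hypothesis is used in an essential way; once it is packaged as a lemma of the form ``$\bar p\mathrel\alpha\bar q$ coordinatewise implies $w(\bar p)^{-1}w(\bar q)\in\dis_\alpha$'' --- plausibly already available from the machinery of Section~\ref{s:3} --- both directions of the theorem reduce to the short computations sketched above.
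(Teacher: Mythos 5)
Your proof is correct and follows essentially the same route as the paper: the forward implications are obtained by feeding explicit commutator and fixed-point witnessing terms into the term condition (the paper uses a conjugation word where you use a commutator word), and the converse by verifying the term condition on lt-terms, splitting on whether the distinguished variable sits in the rightmost argument (handled by $\alpha$-semiregularity) or occurs inside the translation word (handled by commuting the correction factors, which lie in $\dis_\alpha$, resp.\ $\dis(Q)$, by normality in $\lmlt(Q)$ --- this is the paper's Lemma on words in $\dis_\alpha$ together with its Case 1/Case 2 computation). The only imprecision is that the ltt normal form allows unary terms inside the translations, i.e.\ $L_{s_j(x_{i_j})}$ rather than $L_{x_{i_j}}$, and a unary term $s_{m+1}$ around the rightmost variable; since unary term functions preserve $\alpha$, your argument goes through verbatim.
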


As a special case, we obtain that an ltt left quasigroup $Q$ is abelian if and only if $\dis(Q)$ is abelian and acts semiregularly on $Q$. This partly extends the main result of \cite{JPSZ2}.

The second main theorem, proved in Section \ref{s:nilpotent,solvable}, characterizes solvability and nilpotence of a rack by the corresponding group-theoretic property of the displacement group. 

\begin{theorem}\label{t:nilpotent,solvable}
Let $Q$ be rack. Then
\begin{enumerate}
	\item $Q$ is solvable if and only if $\dis(Q)$ is solvable;
	\item $Q$ is nilpotent if and only if $\dis(Q)$ is nilpotent.
\end{enumerate}
\end{theorem}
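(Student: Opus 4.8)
The plan is to transfer the congruence structure of $Q$ across the Galois connection of Section~\ref{s:3} and compare it, step by step, with the normal-subgroup structure of $\dis(Q)$. I will use freely from Sections~\ref{s:2}--\ref{s:3} that $\dis_\alpha\trianglelefteq\dis(Q)$ for every congruence $\alpha$, that $\dis_\beta/\dis_\alpha\cong\dis_{\beta/\alpha}(Q/\alpha)$ whenever $\alpha\le\beta$ (so in particular $\dis(Q/\alpha)\cong\dis(Q)/\dis_\alpha$), and that the operators satisfy $\dis_{\con N}\le N$ and $\alpha\le\con{\dis_\alpha}$; I also use that every rack is ltt, so Theorem~\ref{t:abelian,central} applies to $Q$ and to all its quotients. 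Both parts then reduce to matching a suitable series of congruences of $Q$ with a suitable series of normal subgroups of $\dis(Q)$.

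First, the forward implications. If $Q$ is solvable, fix a chain $0_Q=\alpha_0\le\alpha_1\le\dots\le\alpha_n=1_Q$ with each factor $\alpha_{i+1}/\alpha_i$ abelian in $Q/\alpha_i$. Theorem~\ref{t:abelian,central}(1), applied in $Q/\alpha_i$, makes $\dis_{\alpha_{i+1}}/\dis_{\alpha_i}\cong\dis_{\alpha_{i+1}/\alpha_i}(Q/\alpha_i)$ abelian; since each $\dis_{\alpha_i}$ is normal in $\dis(Q)$, the chain $1=\dis_{\alpha_0}\le\dots\le\dis_{\alpha_n}=\dis(Q)$ is a normal series with abelian factors, hence $\dis(Q)$ is solvable. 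If $Q$ is nilpotent, run the same argument with a central chain: by Theorem~\ref{t:abelian,central}(2), centrality of $\alpha_{i+1}/\alpha_i$ forces $\dis_{\alpha_{i+1}}/\dis_{\alpha_i}$ to be central in $\dis(Q)/\dis_{\alpha_i}$, i.e.\ $[\dis_{\alpha_{i+1}},\dis(Q)]\le\dis_{\alpha_i}$, so the chain is a central series and $\dis(Q)$ is nilpotent.

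For the converse implications the essential new ingredient is a comparison lemma, to be established in Section~\ref{s:commutators}: $\dis_{[\alpha,\beta]}\subseteq[\dis_\alpha,\dis_\beta]$ for all congruences $\alpha,\beta$ of a rack. Granting this, suppose $\dis(Q)$ is solvable with derived series $\dis(Q)=\dis(Q)^{(0)}\ge\dots\ge\dis(Q)^{(n)}=1$, set $\delta_k:=\con{\dis(Q)^{(k)}}$, so that $\delta_0=1_Q$ and $\dis_{\delta_k}\subseteq\dis(Q)^{(k)}$; then $\dis_{[\delta_k,\delta_k]}\subseteq[\dis_{\delta_k},\dis_{\delta_k}]\subseteq[\dis(Q)^{(k)},\dis(Q)^{(k)}]=\dis(Q)^{(k+1)}$, whence $[\delta_k,\delta_k]\le\con{\dis(Q)^{(k+1)}}=\delta_{k+1}$ by the adjunction. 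This yields a chain $1_Q=\delta_0\ge\dots\ge\delta_n=\con 1$ with abelian factors; since $\dis_{\con 1}=1$, Theorem~\ref{t:abelian,central}(1) shows $\con 1$ is abelian, so appending $\delta_n\ge 0_Q$ completes a chain witnessing solvability of $Q$. For nilpotence, repeat with the lower central series $\gamma_k(\dis(Q))$ and $\delta_k:=\con{\gamma_k(\dis(Q))}$: the comparison lemma together with $[\dis_{\delta_k},\dis(Q)]\subseteq[\gamma_k(\dis(Q)),\dis(Q)]=\gamma_{k+1}(\dis(Q))$ gives $[\delta_k,1_Q]\le\delta_{k+1}$; equivalently, the comparison lemma gives $\dis_{\gamma_k(Q)}\subseteq\gamma_k(\dis(Q))$, so $\gamma_n(Q)\le\con 1$ as soon as $\gamma_n(\dis(Q))=1$.

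I expect the proof of the comparison lemma to be the main obstacle. It should go by unwinding the term-condition definition of $[\alpha,\beta]$ and checking that every generator of $\dis_{[\alpha,\beta]}$ already lies in the group commutator $[\dis_\alpha,\dis_\beta]$; the delicate point is that the failure of $\alpha$-semiregularity recorded in $[\alpha,\beta]$ (compare Theorem~\ref{t:abelian,central}) must be shown to be absorbed into orbits of $[\dis_\alpha,\dis_\beta]$, which is where the rack identity $L_{g(x)}=gL_xg^{-1}$ enters. A second subtlety, present only in the nilpotent case, is that the congruence $\con 1=\{(x,y):L_x=L_y\}$ — whose blocks are ``permutation subracks'' — is always abelian but need not be central, since $\dis(Q)$ may act on it non-semiregularly; hence one cannot simply cap the central chain at $\con 1$, and an extra argument is needed to push the lower central series of $Q$ all the way down to $0_Q$ below $\con 1$ (for instance, by replacing $\con 1$ with the largest $\dis(Q)$-semiregular congruence it contains, which is central, and verifying that $\gamma_n(Q)$ lies below it). Finally, the quotient isomorphism $\dis_\beta/\dis_\alpha\cong\dis_{\beta/\alpha}(Q/\alpha)$, on which the forward direction rests, should be confirmed for arbitrary racks.
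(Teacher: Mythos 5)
Your proposal has two genuine gaps, one in each direction. In the forward direction, the identification $\dis_{\alpha_{i+1}}/\dis_{\alpha_i}\cong\dis_{\alpha_{i+1}/\alpha_i}(Q/\alpha_i)$ is false: what Proposition \ref{p:dis_alpha1}(1) gives is $\pi_{\alpha_i}(\dis_{\alpha_{i+1}})=\dis_{\alpha_{i+1}/\alpha_i}$, and the kernel of $\pi_{\alpha_i}$ on $\dis(Q)$ is $\dis^{\alpha_i}$, which in general properly contains $\dis_{\alpha_i}$. So abelianness of $\alpha_{i+1}/\alpha_i$ only yields $[\dis_{\alpha_{i+1}},\dis_{\alpha_{i+1}}]\leq\dis^{\alpha_i}$, not $\leq\dis_{\alpha_i}$, and your series need not have abelian (resp.\ central) factors. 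This is not a repairable bookkeeping slip: the paper's tightness examples (non-principal latin quandles of order $27$, nilpotent of length $2$ with displacement group nilpotent of length $3$) show that the chain $1\leq\dis_{\alpha_1}\leq\dots\leq\dis_{\alpha_n}$ cannot witness length $n$. The missing ingredient is Proposition \ref{p:dis_alpha2}(1), $[\dis^{\alpha},\lmlt(Q)]\leq\dis_{\alpha}$, which bridges the gap between $\dis^{\alpha}$ and $\dis_{\alpha}$ at the cost of roughly doubling the length (Lemma \ref{solvable then dis solv}).

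In the converse direction, everything rests on the unproved ``comparison lemma'' $\dis_{[\alpha,\beta]}\leq[\dis_{\alpha},\dis_{\beta}]$, which you yourself flag as the main obstacle. It is genuinely doubtful: by Proposition \ref{p:commutators}, $[\alpha,\beta]$ is driven not only by the group commutator $[\dis_\alpha,\dis_\beta]$ but also by failures of $\alpha$-semiregularity, which $[\dis_\alpha,\dis_\beta]$ does not control (this is exactly the phenomenon behind Example \ref{ex:non-commutative commutator} and behind the three-element quandle with two orbits, whose displacement group is abelian although the quandle is not). Your proposed congruences $\con{\dis(Q)^{(k)}}$ inherit this problem, and you also leave open the terminal step for nilpotence, since $\con{1}=\lambda_Q$ need not be central. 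The paper circumvents both difficulties at once: instead of $\con{N}$ it takes the orbit congruence $\O_N$ for $N$ a term of the derived (resp.\ lower central) series, where Lemma \ref{abelian subgroup gives abelian cong} shows that $\O_N$ is abelian (resp.\ central) \emph{including} the semiregularity requirement, because commutativity of $N$ makes every $f$ fixing $a$ also fix every point of $N$-orbit of $a$; the argument is then completed by induction on the length, using $N\leq\dis^{\O_N}$ (Lemma \ref{Dis solv implies solv}). You would need either a proof of your comparison lemma together with a resolution of the $\lambda_Q$ issue, or a switch to the orbit-congruence route.
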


\begin{corollary}\label{Latin are solv}
Finite latin quandles are solvable.
\end{corollary}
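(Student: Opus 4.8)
The plan is to reduce the corollary to Theorem \ref{t:nilpotent,solvable}(1) together with a classical result on latin quandles. By that theorem, a rack $Q$ is solvable if and only if $\dis(Q)$ is solvable, so it suffices to show that for a finite latin quandle $Q$ the displacement group $\dis(Q)$ is a solvable group. Since $Q$ is a latin quandle, it is a left distributive quasigroup, and $\dis(Q)$ is a normal subgroup of the left multiplication group $\lmlt(Q)=\langle L_a : a\in Q\rangle$ of finite index (indeed, the quotient $\lmlt(Q)/\dis(Q)$ is cyclic, generated by the image of any single $L_a$). Thus $\dis(Q)$ is solvable if and only if $\lmlt(Q)$ is solvable, and the whole matter comes down to showing that the left multiplication group of a finite left distributive quasigroup is solvable.

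The key input here is Stein's theorem \cite{SteA} (cited in the introduction), which asserts exactly that: the multiplication group of a finite left distributive quasigroup is solvable. So the second step is simply to invoke \cite{SteA} for $\lmlt(Q)$, then transfer solvability down to the normal subgroup $\dis(Q)$ (subgroups of solvable groups are solvable), and finally apply Theorem \ref{t:nilpotent,solvable}(1) in the direction ``$\dis(Q)$ solvable $\implies Q$ solvable.'' One should double-check the bookkeeping that a finite latin quandle is in particular a rack (it is a quandle, hence a rack) so that Theorem \ref{t:nilpotent,solvable} applies verbatim.

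The only real obstacle is making sure the relationship between $\dis(Q)$ and $\lmlt(Q)$ is set up correctly and that Stein's theorem is stated in a form we can use directly. The paper has presumably established in Section \ref{s:2} that $\dis(Q)\trianglelefteq\lmlt(Q)$ with cyclic quotient for any quandle, so the transfer is routine; if instead we want to avoid citing \cite{SteA} as a black box, an alternative is to note that a finite left distributive quasigroup is, by a Feit--Thompson-type argument on the orders involved, already covered by the structure theory of such quasigroups, but this is more work and the clean route is via Stein. In short, the proof is: latin quandle $\Rightarrow$ left distributive quasigroup $\Rightarrow$ (Stein) $\lmlt(Q)$ solvable $\Rightarrow$ $\dis(Q)$ solvable $\Rightarrow$ (Theorem \ref{t:nilpotent,solvable}) $Q$ solvable. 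The bulk of the intellectual content sits in Theorem \ref{t:nilpotent,solvable} and in \cite{SteA}; the corollary itself is a short deduction.
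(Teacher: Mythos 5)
Your proposal is correct and follows essentially the same route as the paper: the paper's proof simply cites Stein \cite{SteA} for solvability of the displacement group of a finite latin quandle and then applies Theorem \ref{t:nilpotent,solvable}(1). Your extra detour through $\lmlt(Q)$ and its cyclic quotient by $\dis(Q)$ is harmless but unnecessary, since solvability passes to subgroups in either formulation of Stein's result.
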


\begin{proof}
Finite latin quandles have solvable displacement groups by \cite[Theorem 1.4]{SteA}, hence Theorem \ref{t:nilpotent,solvable} applies.
\end{proof}

In Section \ref{s:nilpotent_prime_decomposition}, we prove the prime decomposition theorem for finite nilpotent quandles satisfying certain homogenity assumptions. In particular, it applies to latin quandles.

\begin{theorem}\label{t:nilpotent_prime_decomposition}
Let $Q$ be a finite connected faithful quandle. Then $Q$ is nilpotent if and only if $Q$ is a direct product of connected quandles of prime power size.
\end{theorem}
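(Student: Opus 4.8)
The plan is to leverage Theorem~\ref{t:nilpotent,solvable}(2) to translate the nilpotence of $Q$ into nilpotence of $\dis(Q)$, and then transport the prime decomposition of the nilpotent group $\dis(Q)$ back to $Q$. First I would fix a finite connected faithful quandle $Q$ and assume $Q$ is nilpotent; by Theorem~\ref{t:nilpotent,solvable}, $G:=\dis(Q)$ is a finite nilpotent group, hence $G=P_1\times\cdots\times P_r$ is the direct product of its Sylow $p_i$-subgroups. Faithfulness should guarantee that $Q$ embeds into (a coset space of) $G$, so that $Q$ can be realized on a transitive $G$-set; connectedness means $G$ itself acts transitively on $Q$. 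The central task is to show that the Sylow decomposition of $G$ induces a product decomposition $Q\cong Q_1\times\cdots\times Q_r$ where $Q_i$ is a quandle on which $P_i$ acts and whose displacement group is (a quotient of, hence by faithfulness equal to) $P_i$, so that $|Q_i|$ is a power of $p_i$; each $Q_i$ is then nilpotent again by Theorem~\ref{t:nilpotent,solvable}, and connected because $P_i$ acts transitively on it.

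The key steps, in order: (i) identify $Q$ with the orbit of a point under $G=\dis(Q)$ acting on itself by an appropriate action (using the standard correspondence between connected quandles and certain configurations in their displacement group, together with faithfulness to control the point stabilizer); (ii) observe that since $G=P_1\times\cdots\times P_r$ with the $P_i$ pairwise coprime, the point stabilizer $H=G_x$ decomposes as $H=(H\cap P_1)\times\cdots\times(H\cap P_r)$, because in a nilpotent group every subgroup is the product of its intersections with the Sylow subgroups; (iii) conclude the $G$-set $G/H$ factors as $\prod_i P_i/(H\cap P_i)$, and check that the quandle operation, built from the left-translation structure, respects this product, yielding $Q\cong\prod_i Q_i$ with $Q_i$ supported on $P_i/(H\cap P_i)$; (iv) verify $Q_i$ is a connected quandle with $\dis(Q_i)$ a $p_i$-group, so $|Q_i|$ divides $|P_i|$ and is a prime power, and that $Q_i$ inherits faithfulness and nilpotence. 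For the converse direction, if $Q=\prod Q_i$ with each $Q_i$ connected of prime power order, then each $\dis(Q_i)$ acts transitively on a set of prime power size, and a transitive group of prime power degree that is generated by the displacement elements can be shown to be a $p_i$-group (here one uses that $\dis(Q_i)$ is generated by $L_aL_b^{-1}$ and the orbit-stabilizer count forces it into a Sylow subgroup of the relevant symmetric group after the faithful realization); hence each $\dis(Q_i)$ is nilpotent, $\dis(Q)$ embeds in $\prod\dis(Q_i)$ and is therefore nilpotent, and Theorem~\ref{t:nilpotent,solvable} gives nilpotence of $Q$.

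The main obstacle I anticipate is step (iii)—checking that the quandle structure, not merely the underlying $G$-set, splits along the Sylow decomposition. The $G$-set $G/H$ always factors once $H$ factors, but one must verify that the quandle operation $x*y=L_x(y)$, equivalently the choice of the distinguished element realizing the right translations (or the section $Q\to G$ picking out the $L_x$'s), is compatible with the product; concretely, the element of $G$ representing $L_x$ for $x=(x_1,\dots,x_r)$ must be the product of the elements representing $L_{x_i}$ in $P_i$, and this requires knowing that the canonical generators $L_aL_b^{-1}$ of $\dis(Q)$ distribute correctly across the factors. I expect this to follow from the fact that the map $a\mapsto$ (the action of $L_a$) composed with projection to $P_i$ gives the left-translation structure of $Q_i$, but making this precise—especially handling the passage between $\mathrm{LMlt}(Q)$ and $\dis(Q)$ and the role of faithfulness in pinning down stabilizers—is the delicate point. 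A secondary subtlety is the converse: extracting that a transitive faithful quandle of prime power order has a $p$-group displacement group, for which I would invoke that $\dis(Q_i)$ is then nilpotent by Theorem~\ref{t:nilpotent,solvable} applied to $Q_i$ being nilpotent (a connected quandle of prime power order is nilpotent), so that $\dis(Q_i)$ is nilpotent and transitive of $p_i$-power degree, forcing it to be a $p_i$-group.
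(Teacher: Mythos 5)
Your forward direction is, in substance, the paper's own argument: represent the connected quandle $Q$ as the coset quandle $\mathcal{Q}_{\mathrm{Hom}}(\dis(Q),\dis(Q)_e,\widehat L_e)$, decompose the nilpotent group $\dis(Q)$ as the direct product of its Sylow subgroups, check that $\widehat L_e$ and the point stabilizer decompose compatibly, and read off the product of quandles of prime power size. The delicacy you flag in step (iii) evaporates once the coset quandle formula $aH*bH=af(a^{-1}b)H$ is written down: multiplication, inversion and $f=\widehat L_e$ all act componentwise on $\prod S_p$ (Sylow subgroups being characteristic), so the operation splits automatically. Your step (ii), decomposing the stabilizer via the general fact that every subgroup of a finite nilpotent group is the product of its intersections with the Sylow subgroups, is a legitimate minor variant of the paper's route, which instead uses faithfulness to identify $\dis(Q)_e$ with $\mathrm{Fix}(\widehat L_e)$ and decomposes that fixed-point subgroup.

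The genuine gap is in the converse, where you need: a finite connected quandle of prime power size is nilpotent, equivalently its displacement group is nilpotent. Neither of your two justifications works. The claim that ``the orbit-stabilizer count forces $\dis(Q_i)$ into a Sylow subgroup of the relevant symmetric group'' is false: a transitive permutation group of prime power degree need not be a $p$-group ($S_4$ is transitive of degree $2^2$), and no elementary counting argument is available here. Your fallback is circular: you propose to deduce that $\dis(Q_i)$ is nilpotent from ``$Q_i$ being nilpotent (a connected quandle of prime power order is nilpotent)'', which is exactly the statement to be established. In the paper this is Proposition \ref{p:p-group}: for a connected rack of size $p^k$ the displacement group is a $p$-group, and its proof is genuinely non-elementary --- it applies \cite[Theorem A.2]{EGS} to the conjugacy class $\{L_a: a\in Q\}$ of $\lmlt(Q)$ to conclude that $\lmlt(Q)/O_p(\lmlt(Q))$ is cyclic, whence $\dis(Q)=\lmlt(Q)'\leq O_p(\lmlt(Q))$. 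You must either cite that proposition or supply an argument of comparable depth; once it is in hand, the converse is immediate from Theorem \ref{t:nilpotent,solvable} and the closure of nilpotence under finite direct products (Proposition \ref{p:HSP}), as you correctly outline.
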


In Section \ref{s:7}, we present the construction of abelian and central extensions, inspired by \cite[Section 2.3]{AG} and \cite[Section 7]{FM}, and ask which surjective homomorphisms can be represented by these extensions. We show a positive result for central extensions (Proposition \ref{p:central_ext_rep}) and a negative result for abelian extensions (Example \ref{e:abelian_ext_rep} which stands in contrast to our postive results on abelian extensions of loops \cite[Theorem 4.1]{SV2}).

In the last section, we present three applications. The proofs are simple, yet, with previous methods, the results were either very complicated to prove, or inaccesible. 
First, we show two non-existence results: there are no connected involutory quandles of order $2^k$ (Theorem \ref{t:2^k}), and there are no latin quandles of order $\equiv2\pmod4$ (Theorem \ref{t:stein}). The latter is known as Stein's theorem \cite{SteS} and originally required a rather involved topological argument.
Next, we prove that knots and links with trivial Alexander polynomial are not colorable by any solvable quandle (Theorem \ref{t:coloring}), extending an analogical result for affine quandles \cite{Bae}.
Finally, we explain the Belousov-Onoi correspondence that translates our results for latin quandles into their loop isotopes.

\subsection{The structure of the paper}

In Section \ref{s:2}, we summarize the basic facts and observations about quandles, racks and left quasigroups, necessary to understand the rest of the paper. 
Section \ref{s:3} explains the Galois correpsondence between congruences and subgroups of the displacement group. In the end, we also discuss when the two operators $\dis_\alpha$ and $\con{N}$ give mutually inverse lattice isomorphisms.
In Section \ref{s:4}, we give a brief introduction to the commutator theory and present several basic facts about terms in left quasigroups, including the ltt property.
In Section \ref{s:5}, we adapt the commutator theory to ltt quasigroups and prove Theorem \ref{t:abelian,central}. Then we show that for faithful quandles, one can drop the semiregularity conditions, and, in turn, the commutator has better properties. We also calculate the center of a rack, and prove that medial racks are nilpotent.
In Section \ref{s:6}, we investigate nilpotence and solvability, and prove Theorems \ref{t:nilpotent,solvable} and \ref{t:nilpotent_prime_decomposition}.
Section \ref{s:7} is about abelian and central extensions, and Section \ref{s:8} contains the applications.

\section{Rack and quandle theoretic concepts} \label{s:2}

\subsection{Division in binary algebraic structures}

By an \emph{algebraic structure} we mean a non-empty set equipped with a collection of operations (of arbitrary finite arity). We will mostly consider algebraic structures with two binary operations.

Let $*$ be a binary operation on $Q$. For $a\in Q$, let
\begin{displaymath}
L_a:Q\to Q,\quad b\mapsto a\ast b;\qquad   R_a:Q\to Q,\quad a\mapsto b\ast a
\end{displaymath}
be the \emph{left translation} by $a$ and the \emph{right translation} by $a$, respectively.
If all left translations are bijective, we can define the left division operation by
\[   a\backslash b = L_a^{-1}(b).\]
The resulting algebraic structure $Q=(Q,*,\ld)$ will be called a \emph{left quasigroup}. Left quasigroups can be axiomatized by the identities $x\backslash(x*y)=y=x*(x\backslash y)$.
A left quasigroup is called \emph{involutory} if $L_a^2=1$ for every $a$, i.e., if the identity $x*(x*y)=y$ holds, or equivalently, if $*=\backslash$. 

If all left and right translations are bijective, we use the term \emph{quasigroup}, or we use the adjective \emph{latin}. The right division operation is defined analogically.

Many universal algebraic concepts, such as subalgebras, congruences and their properties (in particular, the centralizing relation $C(\alpha,\beta;\delta)$ that defines the commutator), are sensitive to the choice of operations. In our paper, left quasigroups, including racks and quandles, will always be considered as structures $(Q,*,\ld)$, including left division (and excluding right division in the latin case). In particular, substructures and quotients of left quasigroups are always left quasigroups.
 
For latin quandles, there is a collision with the standard setting of quasigroup theory, where both division operations are considered. We avoid this collision by stating all results only for finite quasigroups where the choice of operations is irrelevant, since both divisions can be defined by a multiplicative term: indeed, if $n$ is the least common multiple of orders of all left translations, then $a\ld b=L_a^{n-1}(b)=a*(a*(\ldots*(a*b)))$, and similarly for right division.

For a left quasigroup $Q$, we define two important subgroups of the symmetric group over the set $Q$: the \emph{left multiplication group} and the \emph{displacement group}
\[ \lmlt(Q)=\langle L_a: \ a\in Q\rangle,\qquad \dis(Q)=\langle L_a L_b^{-1}: \ a,b\in Q\rangle. \]
The group $\lmlt(Q)$ acts naturally on the set $Q$. Whenever we say that (a subgroup of) $\lmlt(Q)$ acts in some way, we implicitly mean the natural action on $Q$.

A left quasigroup $Q$ is called \emph{connected} if $\lmlt(Q)$ acts transitively on $Q$. Quasigroups are always connected, since $L_{b/a}(a)=b$ for every $a,b$.

\subsection{Racks and quandles}

A \emph{rack} is a left quasigroup in which all left translations are automorphisms. This can be expressed as an identity,
\begin{equation}
x*(y*z)=(x*y)*(x*z)\label{LD},\end{equation} called \emph{left self-distributivity}. 
An idempotent rack (i.e., where $x*x=x$ holds) is called a \emph{quandle}.
We refer to \cite[Sections 1-8]{J} or \cite[Section 2]{HSV} for a collection of basic properties of racks and quandles to be used in the present paper. 
In particular, we will use without further reference that, in quandles, the actions of $\lmlt(Q)$ and $\dis(Q)$ have the same orbits. 

A binary algebraic structure satisfying the identity $(x*y)*(u*v)=(x*u)*(y*v)$ is called \emph{medial}.
A rack is medial if and only if its displacement group is abelian \cite[Proposition 2.4]{HSV}. A comprehensive study of medial quandles can be found in \cite{JPSZ1}.

Let $(Q,*)$ be a binary algebraic structure. For every $f\in\aut{Q}$ and $a\in Q$, we have 
\begin{equation}\label{L_f(a)}
L_{f(a)}=fL_af^{-1}.
\end{equation}
In particular, if $Q$ is a rack, then $L_{a*b}=L_a L_b L_a^{-1}$ for every $a,b\in Q$.

We will need the following constructions of quandles.

\begin{example}
Let $G$ be a group and $C\subseteq G$ closed under conjugation. For $a,b\in C$, let $a*b = aba^{-1}$ and $a\ld b=a^{-1}ba$. Then $(C,*,\ld)$ is a quandle, called the \emph{conjugation quandle} on $C$.
\end{example}

\begin{example}
Let $G$ be a group, $f\in\aut{G}$ and $H\le \mathrm{Fix}(f)=\setof{a\in G}{f(a)=a}$. Let $G/H$ be the set of left cosets, $G/H=\setof{aH}{a\in G}$, and define
\[  aH * bH=af(a^{-1}b)H. \]
It is easy to calculate that there is a left division operation $\ld$ such that $\mathcal{Q}_{\mathrm{Hom}}(G,H,f)=(G/H,\ast,\ld)$ is a quandle, called \emph{coset quandle}.
A coset quandle of the form $\mathcal{Q}_{\mathrm{Hom}}(G,1,f)$ is called \emph{principal}.
If, in addition, $G$ is an abelian group, then $\mathcal{Q}_{\mathrm{Hom}}(G,1,f)$ is called \emph{affine}, and it is also denoted by $\Aff(G,f)$.
\end{example}

A general quandle is called \emph{principal} (resp. \emph{affine}), if it is isomorphic to a principal (resp. affine) coset quandle.

\begin{example}
A \emph{permutation rack} is a rack whose $*$ operation does not depend on the left argument, i.e. $a*b=\sigma(b)$ where $\sigma$ is a permutation of the underlying set. In particular, by a \emph{projection quandle} we mean a permutation quandle with the operation $a*b=b$. (The adjective \emph{trivial} is reserved for one-element structures.)
\end{example}

All connected quandles with $\leq 47$ elements were enumerated \cite{HSV,RIG} and stored in the RIG library, a part of the RIG package for GAP. We often pick examples from the library, and our claims are easy to verify in GAP.
To put the subject of our study into the RIG context: there are 791 connected quandles of order $\leq47$, of which 492 are abelian, 49 nilpotent non-abelian, and 185 solvable non-nilpotent. Among the 65 non-solvable quandles, 23 are simple non-abelian, and the remaining 42 have an abelian congruence with a simple non-abelian factor.

\subsection{Congruences and homomorphisms}

Let $\alpha$ be an equivalence on a set $A$. We will use the notation $a\,\alpha\,b$ instead of $(a,b)\in\alpha$. 
The blocks will be denoted by $[a]_\alpha=\setof{b\in A}{a\,\alpha \, b}$, and we let $A/\alpha=\setof{[a]_\alpha}{a\in A}$. 
We drop the index $\alpha$ if it is clear to which equivalence we are referring to.

To study quotients (or factors) of left quasigroups, we borrow the concept of a \emph{congruence} from universal algebra \cite[Chapter 1]{Bergman}. A congruence of an algebraic structure $A$ is an equivalence $\alpha$ on $A$ compatible with all operations of $A$. For left quasigroups, this means that, for every $a,b,c,d$, 
\[ a\, \alpha \, b \text{ and }c \, \alpha \, d\ \Rightarrow\ (a*c) \, \alpha \, (b*d)\text{ and }(a\ld c)\, \alpha \, (b\ld d).\] 
Note that an equivalence $\alpha$ is compatible with a binary operation $\circ$ if and only if, for every $a,b,c$, 
\[ a\, \alpha \, b \ \Rightarrow\ (a\circ c) \, \alpha \, (b\circ c)\text{ and }(c\circ a) \, \alpha \, (c\circ b).\] 
Congruences of an algebraic structure $A$ form a complete lattice, denoted by $\Con(A)$, with the largest element $1_A=A\times A$ and the smallest element $0_A=\{(a,a):a\in A\}$. The lattice operations will be denoted by $\wedge$ and $\vee$. Namely, $\alpha\wedge \beta$ is the intersection of $\alpha$ and $\beta$, and $\alpha\vee\beta$ is the smallest congruence containing the union of $\alpha$ and $\beta$.

If $\alpha$ is a congruence of $A$, the quotient $A/\alpha$ is well defined. It is easy to see that 
\[ \Con(A/\alpha)=\{\beta/\alpha:\ \alpha\leq\beta\in\Con(A)\}\] where $[a]_\alpha \, \beta/\alpha\, [b]_\alpha$ if and only if $a\,\beta\, b$.

Let $Q$ be a left quasigroup and $\alpha$ its congruence. We will frequently use the following two observations. For every $f\in\lmlt(Q)$, if $a\, \alpha \, b$ then $f(a)\,\alpha\,f(b)$.
If $a$ is an idempotent element, then the block $[a]_\alpha$ is a subalgebra of $Q$ (indeed, if $b,c\in[a]$, then $(b*c)\,\alpha\,(a*a)=a$ and $(b\ld c)\,\alpha\,(a\ld a)=a$).

\medskip
Let $Q$ and $R$ be left quasigroups. A mapping $f:Q\to R$ is called a {\it homomorphism}, if $f(a*b)=f(a)*f(b)$ for every $a,b\in Q$. Then also $f(a\ld b)=f(a)\ld f(b)$ for every $a,b\in Q$: we have $f(b)=f(a*(a\ld b))=f(a)*f(a\ld b)$, and divide by $f(a)$ from the left.
Every homomorphism $f:Q\to R$ carries a congruence of $Q$, called the \emph{kernel}:
\[\ker f=\setof{(a,b)}{f(a)=f(b)}.\] 
By the first isomorphism theorem, $Q/\ker f\simeq \mathrm{Im}(f)$, hence quotients and homomorphic images are essentially the same thing.

Let $Q$ be a left quasigroup and $\alpha$ its congruence. It is straightforward to check that the mapping
\begin{equation}\label{pi_alpha}
\pi_{\alpha}:\lmlt (Q)\longrightarrow \lmlt(Q/\alpha),\quad L_{a_1}^{k_1}\ldots L_{a_n}^{k_n} \mapsto L_{[a_1]}^{k_1}\ldots L_{[a_n]}^{k_n}
\end{equation}
is a well defined surjective homomorphism of groups \cite{AG}.
The restriction of $\pi_\alpha$ to $\dis (Q)$ gives a surjective homomorphism $\dis (Q)\to\dis(Q/\alpha)$, and its kernel will be denoted by $\dis^\alpha$. It has the following characterization.

\begin{lemma}\label{l:kernel}
Let $Q$ be a left quasigroup and $\alpha$ its congruence. Then
\[ \dis^{\alpha}=\setof{h\in \dis(Q)}{h(a)\,\alpha\,a \text{ for every } a\in Q}. \]
\end{lemma}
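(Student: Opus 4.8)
The plan is to prove the two inclusions of sets separately, using the definition of the kernel of $\pi_\alpha$ together with the fact (established in \eqref{pi_alpha}) that $\pi_\alpha$ sends each $L_a$ to $L_{[a]_\alpha}$ on $Q/\alpha$. For the inclusion $\subseteq$: suppose $h\in\dis^\alpha$, i.e.\ $h\in\dis(Q)$ and $\pi_\alpha(h)=1$ in $\dis(Q/\alpha)\le\lmlt(Q/\alpha)$. Writing $h$ as a word $L_{a_1}^{k_1}\cdots L_{a_n}^{k_n}$ in the left translations, $\pi_\alpha(h)=1$ means $L_{[a_1]}^{k_1}\cdots L_{[a_n]}^{k_n}$ is the identity permutation of $Q/\alpha$. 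Applying this identity to the block $[a]_\alpha$ for an arbitrary $a\in Q$ gives $[h(a)]_\alpha=[a]_\alpha$, i.e.\ $h(a)\,\alpha\,a$; here one uses that $\pi_\alpha(f)([x]_\alpha)=[f(x)]_\alpha$ for every $f\in\lmlt(Q)$, which follows directly from the definition of $\pi_\alpha$ and the definition of the quotient operation on $Q/\alpha$.

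For the reverse inclusion $\supseteq$: suppose $h\in\dis(Q)$ satisfies $h(a)\,\alpha\,a$ for all $a\in Q$. Then for every $a$, $\pi_\alpha(h)([a]_\alpha)=[h(a)]_\alpha=[a]_\alpha$, so $\pi_\alpha(h)$ fixes every block of $\alpha$, i.e.\ $\pi_\alpha(h)$ is the identity map on $Q/\alpha$. Hence $h\in\ker(\pi_\alpha|_{\dis(Q)})=\dis^\alpha$. Note that $h\in\dis(Q)$ is assumed on both sides, so the set on the right is automatically a subset of $\dis(Q)$, matching the left side.

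Both directions reduce to the single elementary fact that $\pi_\alpha(f)$ acts on $Q/\alpha$ "block-wise" as $[x]_\alpha\mapsto[f(x)]_\alpha$, which is immediate from how $\pi_\alpha$ and $Q/\alpha$ were defined. I do not anticipate a genuine obstacle; the only point requiring a word of care is that $h$ need not act as the identity on $Q$ itself (it only acts trivially on blocks), and conversely that "$\pi_\alpha(h)$ fixes each block setwise" is the same as "$\pi_\alpha(h)$ is the identity on $Q/\alpha$" — but since the elements of $Q/\alpha$ \emph{are} the blocks, this is a tautology rather than a difficulty. One could alternatively phrase the whole argument in one line by observing that for $h\in\dis(Q)$, the conditions $\pi_\alpha(h)=1$ and $\forall a\ h(a)\,\alpha\,a$ are literally restatements of each other via the description $\pi_\alpha(h)([a]_\alpha)=[h(a)]_\alpha$.
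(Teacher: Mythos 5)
Your proof is correct and is essentially the paper's own argument, which compresses both inclusions into the single observation that $\pi_\alpha(h)([a]_\alpha)=[h(a)]_\alpha$, so $\pi_\alpha(h)=1$ iff $[h(a)]_\alpha=[a]_\alpha$ for all $a$. Your closing one-line reformulation is literally the proof given in the paper.
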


\begin{proof} 
Since $[h(a)] =\pi_\alpha(h)([a]) = [a]$, then $\pi_\alpha(h)=1$ if and only if $[h(a)] = [a]$ for every $a\in Q$.
 \end{proof}


Observe that if $Q$ is a connected left quasigroup, then every factor is also connected (apply the mapping $\pi_\alpha$).
The converse is false, e.g., for any direct product of a connected and disconnected rack.

A congruence where all blocks have the same size is called \emph{uniform}.

\begin{proposition}\label{p:uniform}
Let $Q$ be a left quasigroup and $\alpha$ its congruence such that $Q/\alpha$ is connected. Then $\alpha$ is uniform.
Moreover, if $Q$ is a quandle, the blocks of $\alpha$ are pairwise isomorphic subquandles of $Q$.
\end{proposition}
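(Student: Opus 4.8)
The plan is to exploit connectivity of the quotient to move blocks onto each other via left multiplications, and then observe that these maps, being bijections of $Q$, restrict to bijections between blocks. First I would fix blocks $[a]_\alpha$ and $[b]_\alpha$ of $\alpha$. Since $Q/\alpha$ is connected, $\lmlt(Q/\alpha)$ acts transitively, so there is some $g=L_{[c_1]}^{k_1}\cdots L_{[c_n]}^{k_n}\in\lmlt(Q/\alpha)$ with $g([a]_\alpha)=[b]_\alpha$. Using the surjective homomorphism $\pi_\alpha$ from \eqref{pi_alpha}, lift $g$ to $f=L_{c_1}^{k_1}\cdots L_{c_n}^{k_n}\in\lmlt(Q)$, so that $\pi_\alpha(f)=g$. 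The key compatibility is that $\pi_\alpha(f)([x]_\alpha)=[f(x)]_\alpha$ for every $x$, which is immediate from the definition of $\pi_\alpha$ together with the fact that every element of $\lmlt(Q)$ preserves $\alpha$ (noted in the text). Hence for every $x\in[a]_\alpha$ we get $[f(x)]_\alpha=\pi_\alpha(f)([a]_\alpha)=g([a]_\alpha)=[b]_\alpha$, i.e.\ $f([a]_\alpha)\subseteq[b]_\alpha$. Applying the same argument to $g^{-1}$ (lifted to $f^{-1}$, which also lies in $\lmlt(Q)$ and also satisfies $\pi_\alpha(f^{-1})=g^{-1}$) gives $f^{-1}([b]_\alpha)\subseteq[a]_\alpha$. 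Since $f$ is a bijection of $Q$, these two inclusions force $f([a]_\alpha)=[b]_\alpha$, so $f$ restricts to a bijection $[a]_\alpha\to[b]_\alpha$, and therefore $|[a]_\alpha|=|[b]_\alpha|$. As $a,b$ were arbitrary, all blocks have the same size and $\alpha$ is uniform.

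For the ``moreover'' part, assume $Q$ is a quandle. Then $Q$ is idempotent, so by the observation in the text each block $[a]_\alpha$ is a subalgebra (subquandle) of $Q$. Keep the notation above, with $f\in\lmlt(Q)$ restricting to the bijection $[a]_\alpha\to[b]_\alpha$. Because $Q$ is a quandle, every left translation $L_c$ is an automorphism of $Q$, hence $\lmlt(Q)\le\aut{Q}$, so in particular $f$ is an automorphism of $Q$. An automorphism that maps the subalgebra $[a]_\alpha$ onto the subalgebra $[b]_\alpha$ bijectively is an isomorphism of the induced subquandles, so $[a]_\alpha\cong[b]_\alpha$. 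Thus the blocks are pairwise isomorphic subquandles.

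I do not expect a genuine obstacle here; the argument is essentially a transport-of-structure via $\pi_\alpha$. The only point requiring a little care is the identity $\pi_\alpha(f)([x]_\alpha)=[f(x)]_\alpha$ and, relatedly, that a preimage $f$ of $g$ under $\pi_\alpha$ can be chosen as an actual composite of left translations (so that $f\in\lmlt(Q)$ and its natural action on $Q$ is compatible with the action of $g$ on $Q/\alpha$); both follow directly from the definition of $\pi_\alpha$ in \eqref{pi_alpha}. One should also make sure to invoke that elements of $\lmlt(Q)$ preserve $\alpha$ (stated in the text) so that $f([a]_\alpha)$ is contained in a single block to begin with, which is what makes the inclusion $f([a]_\alpha)\subseteq[b]_\alpha$ meaningful before upgrading it to an equality by bijectivity.
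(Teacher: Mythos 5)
Your argument is correct and follows essentially the same route as the paper: use connectedness of $Q/\alpha$ to find an element of $\lmlt(Q)$ whose image under $\pi_\alpha$ carries $[a]_\alpha$ to $[b]_\alpha$, check that it maps the block into the block and that its inverse does the reverse, and for the quandle case note that blocks are subquandles and the restriction of an automorphism is an isomorphism. The extra care you take with lifting along $\pi_\alpha$ and with the two inclusions is just a more explicit rendering of the paper's one-line verification.
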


\begin{proof}
Since $Q/\alpha$ is connected, for every $[a],[b]\in Q/\alpha$ there is $h\in\lmlt(Q)$ such that $[b]=\pi_\alpha(h)([a])$. Then $h|_{[a]}$ is a bijection $[a]\to[b]$: it maps $[a]$ into $[b]$, because $c\,\alpha\,a$ implies $h(c)\,\alpha\,h(a)=b$, and $h^{-1}|_{[b]}$ is its inverse mapping.

If $Q$ is a quandle then every congruence block is a subquandle, and thus $h|_{[a]}$ is an isomorphism, since $h\in\lmlt(Q)\leq \aut{Q}$. 
\end{proof}

\subsection{Orbit decomposition and Cayley kernel}

In racks, two particular congruences play a very important role: the \emph{orbit decomposition}, and the \emph{Cayley kernel}.

Let $Q$ be a left quasigroup, and $N$ a normal subgroup of $\lmlt(Q)$. We denote by $\O_N$ the transitivity relation of the action of $N$ on $Q$.
In particular, for $N=\lmlt(Q)$, we obtain the \emph{orbit decomposition} of $Q$, denoted shortly $\O_Q$.

\begin{lemma}\cite[Theorem 6.1]{BLRY}
Let $Q$ be a rack and $N\unlhd\lmlt(Q)$. Then $\O_N$ is a congruence of $Q$.
\end{lemma}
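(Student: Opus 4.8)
The plan is to show that $\O_N$ is an equivalence relation that is compatible with both $*$ and $\ld$. Being the transitivity relation of a group action, $\O_N$ is automatically an equivalence, so the only work is compatibility. By the remark in the excerpt, it suffices to check that $a\,\O_N\,b$ implies $(a*c)\,\O_N\,(b*c)$, $(c*a)\,\O_N\,(c*b)$, and the analogous statements for $\ld$. The two key facts I would use are: first, that in a rack each left translation $L_c$ is an automorphism, so $L_c$ maps $N$-orbits to $N$-orbits precisely when $L_c$ normalizes $N$ inside $\lmlt(Q)$; and second, the conjugation formula \eqref{L_f(a)}, which gives $L_{f(a)} = f L_a f^{-1}$ for any automorphism $f$, in particular $L_{a*b} = L_a L_b L_a^{-1}$.

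First I would handle compatibility in the second coordinate, i.e. $a\,\O_N\,b \Rightarrow (c*a)\,\O_N\,(c*b)$. Suppose $b = h(a)$ for some $h\in N$. Then, since $L_c\in\lmlt(Q)$ and $N\unlhd\lmlt(Q)$, the element $L_c h L_c^{-1}$ lies in $N$, and $L_c h L_c^{-1}(c*a) = L_c h(a) = L_c(b) = c*b$; hence $(c*a)\,\O_N\,(c*b)$. The same computation with $h^{-1}$ and $L_c^{-1}$ in place of $L_c$ (again using normality of $N$) handles left division: from $b = h(a)$ we get $c\ld b = L_c^{-1}h L_c (c\ld a)$ with $L_c^{-1} h L_c\in N$, so $(c\ld a)\,\O_N\,(c\ld b)$.

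Next I would handle compatibility in the first coordinate, $a\,\O_N\,b \Rightarrow (a*c)\,\O_N\,(b*c)$, which is where the left-self-distributivity is essential. Write $b = h(a)$ with $h\in N$. The point is to relate $L_b$ and $L_a$: since left translations are automorphisms, $L_b = L_{h(a)}$. If $h$ were a single $L_d$ this would be $L_d L_a L_d^{-1}$ by \eqref{L_f(a)}, and more generally for $h = L_{d_1}^{\pm1}\cdots L_{d_k}^{\pm1}\in\lmlt(Q)$ we still get $L_{h(a)} = h L_a h^{-1}$ because conjugation by any automorphism is compatible with $L$. Therefore $b*c = L_b(c) = h L_a h^{-1}(c) = h\big(a * h^{-1}(c)\big)$. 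Now $h^{-1}(c)\,\O_N\,c$ (as $h^{-1}\in N$), and by the second-coordinate compatibility already established, $(a*h^{-1}(c))\,\O_N\,(a*c)$; applying $h\in N$, which preserves $\O_N$-blocks, gives $h(a*h^{-1}(c))\,\O_N\,h(a*c)$, while $h(a*c) = h L_a(c)$ and $h L_a(c)\,\O_N\,L_a(c) = a*c$ again since $h\in N$. Chaining these, $b*c\,\O_N\,a*c$. The analogous argument for $\ld$ in the first coordinate goes through by the same manipulation applied to $L_a^{-1}$ and $L_b^{-1}$, using $L_b^{-1} = h L_a^{-1} h^{-1}$.

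The main obstacle, and the step to be most careful about, is the first-coordinate case: it requires simultaneously the fact that $L_{h(a)} = h L_a h^{-1}$ for arbitrary $h\in\lmlt(Q)$ (not just generators), the normality of $N$, and the second-coordinate compatibility as a lemma already in hand — so the order of the steps matters. Everything else is a routine bookkeeping of orbit membership. I would state the second-coordinate compatibility first, then feed it into the first-coordinate argument, and finally note that $\ld$ is handled symmetrically.
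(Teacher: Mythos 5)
Your proof is correct and follows essentially the same route as the paper: normality of $N$ handles compatibility in the second coordinate, and the conjugation formula $L_{h(a)}=hL_ah^{-1}$ together with normality handles the first coordinate. The only cosmetic difference is that the paper exhibits a single element of $N$ (namely $hL_ah^{-1}L_a^{-1}$) carrying $a*c$ to $b*c$, whereas you reach the same conclusion by chaining three $\O_N$-relations; both arguments are sound.
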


\begin{proof}
Clearly $\mathcal{O}_N$ is an equivalence relation on $Q$. Let $b \, \mathcal{O}_N \, c$, i.e. $c=f(b)$ for some $f\in N$. 
Since $N$ is normal in $\lmlt(Q)$,
\begin{align*} 
a*c & =L_a(c) = L_af(b) = L_afL_a^{-1}(a*b),\\
a\backslash c&=L_a^{-1}(c) = L_a^{-1}f(b) = L_a^{-1} fL_a (a\backslash b),
\end{align*}
and thus $(a*c)\, \mathcal{O}_N \,(a*b)$ and $(a\ld c)\, \mathcal{O}_N \,(a\ld b)$. On the other side,
\begin{align*}
 c*a&= L_c(a) = L_{f(b)}L_b^{-1}(b*a) = fL_bf^{-1}L_b^{-1}(b*a),\\
c\backslash a & =L_c^{-1} (a) = L_{f(b)}^{-1}L_b(b\backslash a) = fL_b^{-1}f^{-1}L_b(b\backslash a),
\end{align*}
and thus $(c*a)\, \mathcal{O}_N \,(b*a)$ and $(c\ld a)\, \mathcal{O}_N \,(b\ld a)$.
\end{proof}

Various properties of the $\O_N$ congruences were proved by Even and Gran in \cite{EG}: for example, that they permute with any other congruence. 

Let $Q$ be a left quasigroup. The \emph{Cayley representation} is the mapping $L_Q:Q\to\Sym(Q)$, $a\mapsto L_a$. 
For racks, $L_Q$ is a quandle homomorphism (with respect to the conjugation operation on $\Sym(Q)$), but, unlike for groups, $L_Q$ is not necessarily one-to-one. The kernel of $L_Q$, \[ \lambda_Q=\setof{(a,b)}{L_a=L_b},\] will be called the \emph{Cayley kernel} $Q$. A rack with trivial Cayley kernel is called \emph{faithful}. Note that every faithful rack is a quandle (in racks, $L_{a*a}=L_a$ for every $a$), isomorphic to a conjugation quandle (the image of the Cayley representation).

\section{Congruences and subgroups of the displacement group}\label{s:3}

\subsection{Displacement groups relative to congruences}\label{s:dis_alpha}

Let $Q$ be a left quasigroup and $\alpha$ its congruence. We define the \emph{displacement group relative to $\alpha$}, denoted by $\dis_\alpha$, as the smallest normal subgroup of $\lmlt(Q)$ containing all $L_a L_b^{-1}$ such that $a\,\alpha\,b$. That is,
\[ \dis_\alpha=\langle fL_a L_b^{-1}f^{-1}: \, a\,\alpha\,b,\,f\in\lmlt(Q)\rangle\leq\lmlt(Q).\]
The generating set is closed with respect to conjugation by any automorphism of $Q$. In particular, if $Q$ is a rack, then 
\[ \dis_\alpha=\langle L_a L_b^{-1}: \, a\,\alpha\,b\rangle.\]
The elements of the relative displacement group can be described as follows.

\begin{lemma}\label{l:words in Dis}
Let $Q$ be a left quasigroup and $\alpha$ its congruence. Then
\[ \dis_\alpha =\setof{L_{a_n}^{k_n}\ldots L_{a_1}^{k_1}L_{b_1}^{-k_1}\ldots L_{b_n}^{-k_n}} { k_i\in\mathbb{Z}, \ a_i \, \alpha \, b_i \text{ for all } i=1,\dots,n }. \]
\end{lemma}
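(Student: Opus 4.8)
The plan is to show that the right-hand side, call it $S$, equals $\dis_\alpha$, by a double inclusion, treating $\dis_\alpha$ as the smallest \emph{normal} subgroup of $\lmlt(Q)$ containing all $L_aL_b^{-1}$ with $a\,\alpha\,b$.

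First I would check that $S \subseteq \dis_\alpha$. Each generator of $\dis_\alpha$ of the form $L_aL_b^{-1}$ with $a\,\alpha\,b$ clearly lies in $S$ (take $n=1$, $k_1=1$). Conversely, to see that a typical element $w = L_{a_n}^{k_n}\cdots L_{a_1}^{k_1}L_{b_1}^{-k_1}\cdots L_{b_n}^{-k_n}$ of $S$ lies in $\dis_\alpha$, I would peel off the innermost pair: writing $w = L_{a_n}^{k_n}\cdots L_{a_2}^{k_2}\bigl(L_{a_1}^{k_1}L_{b_1}^{-k_1}\bigr)L_{b_2}^{-k_2}\cdots L_{b_n}^{-k_n}$, the factor $L_{a_1}^{k_1}L_{b_1}^{-k_1}$ is a product of things of the form $L_{c}L_{b_1}^{-1}$ with $c\,\alpha\,b_1$ (telescoping a power, using that $\alpha$ is an equivalence so $a_1\,\alpha\,b_1$ gives intermediate elements related to $b_1$), hence lies in $\dis_\alpha$, and conjugating it by the element $L_{a_n}^{k_n}\cdots L_{a_2}^{k_2}$ of $\lmlt(Q)$ keeps it in $\dis_\alpha$ by normality; then induct on $n$ on what remains, namely $L_{a_n}^{k_n}\cdots L_{a_2}^{k_2}L_{b_2}^{-k_2}\cdots L_{b_n}^{-k_n}$. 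Strictly I should first reduce to the case $k_i = \pm 1$ by splitting each power $L_{a_i}^{k_i}$ into $|k_i|$ copies of $L_{a_i}^{\pm1}$ and correspondingly for $b_i$, which does not change membership in $S$.

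The real content is the reverse inclusion $\dis_\alpha \subseteq S$, for which it suffices to show that $S$ is a normal subgroup of $\lmlt(Q)$ containing the generators $L_aL_b^{-1}$ ($a\,\alpha\,b$); the latter is immediate. That $S$ is closed under inverses is clear from the palindromic shape of its elements. Closure under products is the point where I expect the main obstacle: given $w = L_{\bar a}^{\bar k}\,L_{\bar b}^{-\bar k}$ and $w' = L_{\bar c}^{\bar m}\,L_{\bar d}^{-\bar m}$ in the stated form (using multi-index shorthand for the nested products), the concatenation $ww'$ has a ``mismatch'' in the middle where $L_{\bar b}^{-\bar k}$ meets $L_{\bar c}^{\bar m}$, and I need to massage it back into palindromic form. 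The trick is to conjugate: $ww' = L_{\bar a}^{\bar k}\bigl(L_{\bar b}^{-\bar k}\,L_{\bar c}^{\bar m}\,L_{\bar b}^{\bar k}\bigr)\,L_{\bar b}^{-\bar k}\cdots$, and more systematically to observe that for any $g\in\lmlt(Q)$ and any $a\,\alpha\,b$ one has $g L_a L_b^{-1} g^{-1} = L_{g(a)}L_{g(b)}^{-1}$ when $g$ normalizes appropriately — but since we are in a general left quasigroup this identity need not hold, so instead I would argue purely formally: insert $L_{\bar b}^{\bar k} L_{\bar b}^{-\bar k}$ to rewrite $ww'$ as a product of a conjugate of $w'$ by $L_{\bar a}^{\bar k}$ and then handle it via normality once normality of $S$ is established, making the argument slightly circular unless organized carefully. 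The clean way out is to prove normality of $S$ first and directly: for a single generator $L_c^{1}L_d^{-1}$ of $\lmlt(Q)$ with $c,d$ arbitrary (not necessarily $\alpha$-related) and a word $w\in S$, show $L_c^{\pm1} w L_c^{\mp1} \in S$ by sliding $L_c^{\pm1}$ through the nested product, each time using the identity $L_c^{\pm1} L_{a}^{k} = (L_c^{\pm1} L_a^{k} L_c^{\mp1}) L_c^{\pm1}$ and recognizing $L_c^{\pm1} L_a^{k} L_c^{\mp1}$ together with the matching term on the $b$-side as producing a new palindromic word with the $\alpha$-related pairs shifted but still $\alpha$-related (this is where compatibility of $\alpha$ enters: $a\,\alpha\,b$ is not preserved under $\lmlt$ unless — and here it IS, since $f\in\lmlt(Q)$ preserves $\alpha$ as noted in the excerpt). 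Once $S \trianglelefteq \lmlt(Q)$, closure under products follows by the conjugation identity, and $S$ is exactly the smallest normal subgroup of $\lmlt(Q)$ containing the generators, i.e. $S = \dis_\alpha$. In the rack case the statement then specializes by dropping the normality-closure step since $L_aL_b^{-1}$ already generate a normal subgroup.

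The only delicate bookkeeping is handling negative exponents and the nesting order consistently; I would set this up once with a careful multi-index notation and then the inductions are routine. The step I expect to fight with is verifying that the ad hoc rewriting of $ww'$ into palindromic form genuinely lands in $S$ rather than in some larger set — resolved, as above, by proving normality of $S$ first and deriving multiplicative closure from it.
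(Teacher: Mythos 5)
Your double-inclusion architecture is the same as the paper's: prove that the right-hand side $S$ is a normal subgroup of $\lmlt(Q)$ containing the generators $L_aL_b^{-1}$ with $a\,\alpha\,b$ (hence $\dis_\alpha\subseteq S$), and prove $S\subseteq\dis_\alpha$ by induction, peeling matched layers off the palindrome and invoking normality of $\dis_\alpha$ (the paper peels one factor of the outermost exponent at a time, you peel the innermost pair after telescoping $L_{a_1}^{k_1}L_{b_1}^{-k_1}$ into conjugates of generators; both variants work). The step that fails as you describe it is your proof that $S$ is normal. ``Sliding'' $L_c^{\pm1}$ through the word using $L_c^{\pm1}L_a^k=(L_c^{\pm1}L_a^kL_c^{\mp1})L_c^{\pm1}$ does not yield ``a new palindromic word with the $\alpha$-related pairs shifted'': in a general left quasigroup $L_cL_aL_c^{-1}$ is not a left translation (the identity $L_{f(a)}=fL_af^{-1}$ requires $f\in\aut{Q}$, i.e., essentially the rack case), so after one slide the expression is no longer a word in left translations of the prescribed shape, and the fact that $\lmlt(Q)$ preserves $\alpha$ does not repair this. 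You flag exactly this obstruction when discussing $gL_aL_b^{-1}g^{-1}=L_{g(a)}L_{g(b)}^{-1}$, but then rely on the same mechanism anyway.

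The fix is far simpler and is what the paper does: for $w\in S$ written as a nested palindrome, the conjugate $L_c^{\varepsilon}wL_c^{-\varepsilon}$ is \emph{already} an element of $S$, namely the same palindrome wrapped in one additional outer layer whose matched pair is $(c,c)$, and $c\,\alpha\,c$ by reflexivity --- no rewriting of the interior is needed. With normality available, your product argument $ww'=u_1\,(v_1w'v_1^{-1})\,v_1$ goes through (conjugate $w'$ into $S$, then nest the result inside the $\alpha$-symmetric outer pair $(u_1,v_1)$); the paper avoids even this by the purely formal nesting $fg=g_1\bigl(g_1^{-1}(f_1f_2)g_1\bigr)g_2$, which exhibits $fg$ as a triple-nested palindrome with matched pairs $(g_1,g_2)$, $(g_1^{-1},g_1)$ and $(f_1,f_2)$, each $\alpha$-symmetric. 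With that one repair your proof is correct and essentially coincides with the paper's.
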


\begin{proof}
Let $N$ denote the set on the right hand side of the expression.
Temporarily, we will say that two mappings $u,v\in\lmlt(Q)$ are \emph{$\alpha$-symmetric}, if $u=L_{a_n}^{k_n}\ldots L_{a_1}^{k_1}$ and $v=L_{b_1}^{-k_1}\ldots L_{b_n}^{-k_n}$ for some $k_i\in\mathbb{Z}$ and $a_i\,\alpha\,b_i$. So, $N$ consists of all mappings of the form $uv$ where $u,v$ are $\alpha$-symmetric.

First, we prove that $N$ is a normal subgroup of $\lmlt(Q)$. Let $f=f_1f_2$ and $g=g_1g_2$ be elements of $N$ where both $f_1,f_2$ and $g_1,g_2$ are $\alpha$-symmetric. Then the inverse $f^{-1}=f_2^{-1}f_1^{-1}$ belongs to $N$, since $f_1^{-1},f_2^{-1}$ are also $\alpha$-symmetric; the composition $fg=g_1g_1^{-1}f_1f_2g_1g_2$ belongs to $N$, since all three pairs $g_1,g_2$ and $g_1^{-1},g_1$ and $f_1,f_2$ are $\alpha$-symmetric; and the conjugate $L_a^{\pm1}fL_a^{\mp1}$ belongs to $N$ for an obvious reason.
Since $N$ contains the generators of $\dis_\alpha$, we have that $\dis_\alpha\subseteq N$.

For the other inclusion, we proceed by induction on the length of the expression, i.e., on $n=\sum_{i=1}^n |k_i|$
where $f=L_{a_n}^{k_n}\ldots L_{a_1}^{k_1}L_{b_1}^{-k_1}\ldots L_{b_n}^{-k_n}\in N$, $k_i\neq0$. 
For $n=0$, we have $f=1$ and the statement is trivial. In the induction step, let 
\[ g=L_{a_{n}}^{k_{n}-e}L_{a_{n-1}}^{k_{n-1}}\ldots L_{a_1}^{k_1}L_{b_1}^{-k_1}\ldots L_{b_{n-1}}^{-k_{n-1}}L_{b_{n}}^{-k_{n}+e}\in N,\] 
where $e=1$ if $k_n>0$, and $e=-1$ otherwise. It has a shorter length, and therefore belongs to $\dis_\alpha$. Now, since $\dis_\alpha$ is a normal subgroup,
\[ f=L_{a_n}^{e}gL_{b_n}^{-e} =
\underbrace{L_{a_n}^{e}gL_{a_n}^{-e}}_{\in\dis_\alpha}\underbrace{L_{a_n}^{e}L_{b_n}^{-e}}_{\in\dis_\alpha}\in\dis_\alpha,\]
and the proof is finished.
\end{proof}

Observe that  $\dis_\alpha\leq\dis^\alpha$: if $a\,\alpha\,b$, then $L_{[a]}L_{[b]}^{-1}$ is the identity mapping on $Q/\alpha$, and using the definition from \eqref{pi_alpha},
\[ \pi_\alpha(fL_a L_b^{-1}f^{-1})=\pi_\alpha(f)L_{[a]}L_{[b]}^{-1}\pi_\alpha(f)^{-1}=1_{Q/\alpha}.\]
 It is often the case that $\dis_\alpha\neq\dis^\alpha$. 
Obviously, this happens whenever $\alpha\leq\lambda_Q$ (hence $\dis_\alpha=1$) and $\dis(Q)\not\simeq \dis(Q/\alpha)$. 
There are also examples which cannot be explained by the Cayley kernel, e.g., in non-principal latin quandles of size $27$, as can be checked directly in the RIG library. On the positive side, $\dis_\alpha=\dis^\alpha$ in any finite principal latin quandle, see Example \ref{ex:latin_cdsg}.

\begin{proposition}\label{p:dis_alpha1}
Let $Q$ be a left quasigroup and $\alpha,\beta$ its congruences. Then
\begin{enumerate}
\item[(1)] if $\alpha\leq \beta$, then $\pi_\alpha(\dis_\beta)=\dis_{\beta/\alpha}$ and $\pi_\alpha(\dis^\beta)=\dis^{\beta/\alpha}$.
	\item[(2)] $\dis^{\alpha\wedge \beta} = \dis^{\alpha}\cap \dis^{\beta}$  and $\dis_{\alpha\vee \beta}  =  \dis_{\alpha}\dis_{\beta}$,
	\item[(3)] if $\lambda_Q$ is a congruence, then $\dis_\alpha=\dis_{\alpha\vee\lambda_Q}$,
\end{enumerate}
\end{proposition}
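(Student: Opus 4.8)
The plan is to prove the three items in order, obtaining (3) as an immediate consequence of (2). Throughout, I rely on two facts established earlier: that $\pi_\alpha$ is a surjective group homomorphism $\lmlt(Q)\to\lmlt(Q/\alpha)$ whose restriction to $\dis(Q)$ surjects onto $\dis(Q/\alpha)$, and the description of the kernels $\dis^\gamma$ in Lemma~\ref{l:kernel}. \textbf{Item (1).} For $\dis_\beta$, recall it is the normal closure in $\lmlt(Q)$ of $\{L_aL_b^{-1}:a\,\beta\,b\}$, i.e.\ it is generated by all $fL_aL_b^{-1}f^{-1}$ with $a\,\beta\,b$, $f\in\lmlt(Q)$. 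Applying the surjection $\pi_\alpha$ to this generating set produces exactly the elements $gL_{[a]}L_{[b]}^{-1}g^{-1}$ with $[a]\,(\beta/\alpha)\,[b]$ and $g$ ranging over all of $\lmlt(Q/\alpha)$ (surjectivity of $\pi_\alpha$ is what supplies every such $g$), and by definition these generate $\dis_{\beta/\alpha}$; since the image of a normal subgroup under a surjection is normal, $\pi_\alpha(\dis_\beta)=\dis_{\beta/\alpha}$. For $\dis^\beta$, I would use Lemma~\ref{l:kernel}: $h\in\dis^\beta$ means $h\in\dis(Q)$ with $h(a)\,\beta\,a$ for all $a$, and since $\pi_\alpha(h)([a])=[h(a)]$ while $h(a)\,\beta\,a$ is equivalent to $[h(a)]\,(\beta/\alpha)\,[a]$, this forces $\pi_\alpha(h)\in\dis^{\beta/\alpha}$; conversely any element of $\dis^{\beta/\alpha}$ lifts along $\dis(Q)\to\dis(Q/\alpha)$ to some $h\in\dis(Q)$, and the same equivalence then gives $h\in\dis^\beta$.

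\textbf{Item (2).} The identity $\dis^{\alpha\wedge\beta}=\dis^\alpha\cap\dis^\beta$ is immediate from Lemma~\ref{l:kernel}, since $h(a)\,(\alpha\wedge\beta)\,a$ for all $a$ says precisely that $h(a)\,\alpha\,a$ and $h(a)\,\beta\,a$ for all $a$. For $\dis_{\alpha\vee\beta}=\dis_\alpha\dis_\beta$: the product of the two normal subgroups $\dis_\alpha,\dis_\beta\unlhd\lmlt(Q)$ is again a (normal) subgroup, and $\alpha,\beta\le\alpha\vee\beta$ yields $\dis_\alpha\dis_\beta\le\dis_{\alpha\vee\beta}$. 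For the reverse inclusion I would invoke that $\alpha\vee\beta$ coincides with the transitive closure of $\alpha\cup\beta$ (the transitive closure of a union of congruences is compatible with $*$ and $\ld$, hence is a congruence, hence equals the join). So, given $a\,(\alpha\vee\beta)\,b$, pick a chain $a=c_0,c_1,\dots,c_n=b$ with each consecutive pair in $\alpha$ or in $\beta$, and write $L_aL_b^{-1}=(L_{c_0}L_{c_1}^{-1})(L_{c_1}L_{c_2}^{-1})\cdots(L_{c_{n-1}}L_{c_n}^{-1})$, a product whose factors each lie in $\dis_\alpha$ or $\dis_\beta$, hence in the subgroup $\dis_\alpha\dis_\beta$; normality of $\dis_\alpha\dis_\beta$ then absorbs every conjugate $fL_aL_b^{-1}f^{-1}$, and these conjugates generate $\dis_{\alpha\vee\beta}$.

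\textbf{Item (3).} When $\lambda_Q$ is a congruence we apply (2) with $\beta=\lambda_Q$: each generator $fL_aL_b^{-1}f^{-1}$ of $\dis_{\lambda_Q}$ is trivial because $a\,\lambda_Q\,b$ means $L_a=L_b$, so $\dis_{\lambda_Q}=1$ and $\dis_{\alpha\vee\lambda_Q}=\dis_\alpha\dis_{\lambda_Q}=\dis_\alpha$. The only step that requires genuine care is the reverse inclusion $\dis_{\alpha\vee\beta}\le\dis_\alpha\dis_\beta$, which rests on the combinatorial description of the join as the transitive closure of $\alpha\cup\beta$; everything else is a routine application of Lemma~\ref{l:kernel} together with the surjectivity and normality-preservation properties of $\pi_\alpha$.
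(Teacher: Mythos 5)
Your proposal is correct and follows essentially the same route as the paper: item (1) by pushing the generating set of $\dis_\beta$ through the surjection $\pi_\alpha$ and applying Lemma~\ref{l:kernel} for $\dis^\beta$, item (2) by the kernel description for the meet and a telescoping product $L_{c_0}L_{c_1}^{-1}\cdots L_{c_{n-1}}L_{c_n}^{-1}$ along a witnessing chain for the join, and item (3) as the specialization $\beta=\lambda_Q$ with $\dis_{\lambda_Q}=1$. The only cosmetic difference is that you justify more explicitly that $\alpha\vee\beta$ is the transitive closure of $\alpha\cup\beta$ and that $\dis_\alpha\dis_\beta$ is a normal subgroup, both of which the paper uses implicitly.
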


\begin{proof}
(1) Using Lemma \ref{l:kernel}, we have 
\begin{align*}
\dis_{\beta/\alpha}&=\langle fL_{[a]_\alpha}L_{[b]_\alpha}^{-1}f^{-1}: \, a\, \beta \, b,\, f\in\lmlt(Q/\alpha)\rangle=\pi_\alpha(\dis_\beta),\\
\dis^{\beta/\alpha}&=\setof{\pi_\alpha(h)\in \dis(Q/\alpha)}{h(a)\, \beta \, a}=\pi_\alpha(\dis^\beta).
\end{align*}

(2) For intersection, using Lemma \ref{l:kernel},
\begin{align*}
\dis^{\alpha\wedge\beta}&=\setof{h\in \dis(Q)}{h(a) \,(\alpha\wedge\beta)\, a\, , \forall a\in Q}\\
&=\setof{h\in \dis(Q)}{h(a) \, \alpha \, a\,\text{ and }\, h(a)\,\beta \, a , \forall a\in Q}=\dis^\alpha\cap\dis^\beta.
\end{align*}
For join, it is easy to see that both $\dis_\alpha, \dis_\beta\leq \dis_{\alpha\vee\beta}$, and thus $\dis_\alpha\dis_\beta\leq \dis_{\alpha\vee\beta}$. 
For the other inclusion, let $a\,(\alpha\vee\beta)\,b$, and take the witnesses $a=a_1,\dots,a_n$ and $b_1,\dots,b_n=b$ such that $a_i\,\alpha\,b_i$ and $b_i\,\beta\,a_{i+1}$, for every $i$. Then
\[ L_aL_b^{-1}=\underbrace{L_{a_1}L_{b_1}^{-1}}_{\in\dis_\alpha}\underbrace{L_{b_1}L_{a_2}^{-1}}_{\in\dis_\beta}\underbrace{L_{a_2}L_{b_2}^{-1}}_{\in\dis_\alpha}\cdots\underbrace{L_{a_n}L_{b_n}^{-1}}_{\in\dis_\beta}\in\dis_\alpha\dis_\beta, \]
and thus every generator $fL_aL_b^{-1}f^{-1}$ of $\dis_{\alpha\vee\beta}$ belongs to $\dis_\alpha\dis_\beta$.

(3) This is an immediate consequence of (2) for $\beta=\lambda_Q$, since $\dis_{\lambda_Q}=1$.
%
\end{proof}

\begin{proposition}\label{p:dis_alpha2}
Let $Q$ be a rack and $\alpha$ its congruence. Then
\begin{enumerate}
	\item[(1)] $[\dis^{\alpha},\lmlt(Q)]\leq\dis_{\alpha}$,
	\item[(2)] $\dis^\alpha=\dis^{\mathcal{O}_{\dis^\alpha}}$ and $\mathcal{O}_N=\mathcal{O}_{\dis^{\mathcal{O}_N} }$,
	\item[(3)] $\mathcal O_{\dis^\alpha}\leq\alpha$,
	\item[(4)] if $Q$ is a quandle and $\dis_\alpha=\dis(Q)$, then $\O_Q\leq\alpha$.
\end{enumerate}
\end{proposition}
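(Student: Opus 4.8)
All four items unwind quickly from the description $\dis^\alpha=\setof{h\in\dis(Q)}{h(a)\,\alpha\,a\text{ for all }a}$ in Lemma~\ref{l:kernel} together with the conjugation formula~\eqref{L_f(a)}; only item (1) carries any real content. The plan is first to record two preliminary facts. (i) $\dis^\alpha$ is normal in $\lmlt(Q)$: for $h\in\dis^\alpha$ and $g\in\lmlt(Q)$ one has $h(g^{-1}(a))\,\alpha\,g^{-1}(a)$ for every $a$, and applying $g$ gives $ghg^{-1}(a)\,\alpha\,a$; since $\dis(Q)\unlhd\lmlt(Q)$ also $ghg^{-1}\in\dis(Q)$, so $ghg^{-1}\in\dis^\alpha$. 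In particular $\O_{\dis^\alpha}$ is a congruence, so the statements of (2)--(3) make sense. (ii) $\dis^{(-)}$ is monotone, immediately from Lemma~\ref{l:kernel}. I will also use freely that for a rack $\dis_\alpha=\langle L_aL_b^{-1}:a\,\alpha\,b\rangle$, that $\dis_\alpha$ is by definition a normal subgroup of $\lmlt(Q)$, and that $\dis_\alpha\le\dis^\alpha\le\dis(Q)$ (the first inclusion is the observation preceding Proposition~\ref{p:dis_alpha1}).

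For (1) the idea is to pass to $G=\lmlt(Q)/\dis_\alpha$ and show that the image of $\dis^\alpha$ is central. Let $h\in\dis^\alpha$ and let $L_c$ be a generator of $\lmlt(Q)$. Since $h\in\lmlt(Q)\le\aut{Q}$, formula~\eqref{L_f(a)} gives $hL_ch^{-1}=L_{h(c)}$, hence $hL_ch^{-1}L_c^{-1}=L_{h(c)}L_c^{-1}$; and $h(c)\,\alpha\,c$ by Lemma~\ref{l:kernel}, so $L_{h(c)}L_c^{-1}\in\dis_\alpha$. Thus the image of $h$ in $G$ commutes with every generator, hence with all of $G$; as this holds for all $h\in\dis^\alpha$, the image of $\dis^\alpha$ is central in $G$, which says precisely $[\dis^\alpha,\lmlt(Q)]\le\dis_\alpha$.

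Item (3) is then one line: if $a\,\O_{\dis^\alpha}\,b$ then $b=h(a)$ for some $h\in\dis^\alpha$, so $b\,\alpha\,a$ by Lemma~\ref{l:kernel}. For the first equality in (2): every $h\in\dis^\alpha$ moves each point inside its own $\dis^\alpha$-orbit, so $\dis^\alpha\subseteq\dis^{\O_{\dis^\alpha}}$; conversely $\O_{\dis^\alpha}\le\alpha$ by (3) and $\dis^{(-)}$ is monotone, so $\dis^{\O_{\dis^\alpha}}\subseteq\dis^\alpha$. For the second equality I would read $N$ as a normal subgroup of $\lmlt(Q)$ contained in $\dis(Q)$ (the setting in which $\O_N$ enters the Galois connection; for an arbitrary normal $N$, e.g. $N=\lmlt(Q)$ of a non-trivial permutation rack, the identity fails): then each $g\in N$ lies in $\dis(Q)$ and satisfies $g(a)\,\O_N\,a$, so $N\subseteq\dis^{\O_N}$ and $\O_N\le\O_{\dis^{\O_N}}$, while the reverse inclusion is (3) applied to the congruence $\O_N$.

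Finally, for (4): if $\dis_\alpha=\dis(Q)$ then $\dis(Q)=\dis_\alpha\le\dis^\alpha\le\dis(Q)$ forces $\dis^\alpha=\dis(Q)$, hence $\O_{\dis^\alpha}=\O_{\dis(Q)}$; since $Q$ is a quandle, $\dis(Q)$ and $\lmlt(Q)$ have the same orbits, so $\O_{\dis(Q)}=\O_Q$, and (3) now yields $\O_Q\le\alpha$. The only place requiring genuine care is the hypothesis on $N$ in the second half of (2); everything else is a direct unwinding of definitions, with the use of~\eqref{L_f(a)} in (1) being the single spot where an idea is needed.
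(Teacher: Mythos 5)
Your proof is correct and follows essentially the same route as the paper's: (1) via the commutator identity $[h,L_c]=L_{h(c)}L_c^{-1}\in\dis_\alpha$ coming from \eqref{L_f(a)} and Lemma \ref{l:kernel}, (3) directly from Lemma \ref{l:kernel}, (2) by combining (3) with monotonicity of $\dis^{(-)}$ and the containment $N\le\dis^{\O_N}$, and (4) by reducing to the previous items. Your caveat that the second identity in (2) needs $N\le\dis(Q)$ is a fair reading of the statement (the paper's own proof also tacitly uses $N\le\dis^{\O_N}$, which only holds under that hypothesis), and your alternative ending for (4) — using that $\O_{\dis(Q)}=\O_Q$ in quandles rather than that $Q/\alpha$ is a projection quandle — is an equivalent one-line variant.
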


\begin{proof}
(1) Consider $f\in\dis^{\alpha}\leq\aut{Q}$ and $a\in Q$. Then $f(a)\,\alpha\,a$, and thus, using \eqref{L_f(a)}, $[f,L_a]=L_aL^{-1}_{f(a)}\in\dis_\alpha$.

(2) Let $\beta=\mathcal{O}_{\dis^\alpha}$. Then $\beta\leq\alpha$, and so $\dis^\beta\leq \dis^\alpha$. In the other direction, for $h\in \dis^\alpha$ we have $h(a) \, \beta \, a$ for every $a\in Q$, and thus $h\in \dis^\beta$. \\
According to Lemma \ref{l:kernel}, $N\leq \dis^{\mathcal{O}_N}$ and the orbits of $\dis^{\mathcal{O}_N}$ are contained in the orbit of $N$. Therefore $N$ and $\dis^{\mathcal{O}_N}$ have the same orbits, i.e. $\mathcal{O}_N=\mathcal{O}_{\dis^{\mathcal{O}_N}}$.

(3) If $b=h(a)$ for some $h\in\dis^\alpha$, then $b=h(a)\,\alpha\,a$ by Lemma \ref{l:kernel}.

(4) If $\dis_\alpha=\dis(Q)$, then also $\dis^\alpha=\dis(Q)$ and thus $\dis(Q/\alpha)=1$. So $Q/\alpha$ is a projection quandle and thus $\O_Q\leq \alpha$.
\end{proof}

The converse of (4) fails, for example, for any 2-reductive medial quandle $Q$ which is not a projection quandle: there we have $\mathcal{O}_Q\leq\lambda_Q$ and thus $\dis_{\O_Q}=1$ (see \cite{JPSZ1} for details). In condition (4), the assumption of idempotence is necessary: for example, permutation racks have trivial displacement groups, but $\mathcal O_Q$ can be non-trivial.

\subsection{Congruences determined by subgroups}

Let $Q$ be a left quasigroup. We will denote $\N(Q)$ the lattice of all subgroups of $\dis(Q)$ that are normal in $\lmlt(Q)$ (this is a sublattice of the normal subgroups of $\dis(Q)$). For $N\in\N(Q)$, we define a relation
\[ \con N=\{(a,b):L_aL_b^{-1}\in N\},\]
called the \emph{equivalence determined by $N$}.

\begin{lemma}
Let $Q$ be a rack and $N\in\N(Q)$. Then $\con N$ is a congruence of $Q$.
\end{lemma}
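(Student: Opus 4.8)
The plan is to verify the two defining properties of a congruence of a left quasigroup: that $\con N$ is an equivalence relation, and that it is compatible with both $*$ and $\ld$. The first part is essentially group-theoretic: reflexivity follows since $L_aL_a^{-1}=1\in N$; symmetry since $N$ is a subgroup and $(L_aL_b^{-1})^{-1}=L_bL_a^{-1}$; transitivity since $L_aL_c^{-1}=(L_aL_b^{-1})(L_bL_c^{-1})\in N$ when both factors lie in $N$. So the substance of the proof is compatibility.

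For compatibility it suffices (as recalled in the excerpt) to check that $a\,\con N\,b$ implies $(a*c)\,\con N\,(b*c)$, $(c*a)\,\con N\,(c*b)$, and likewise for $\ld$. First I would handle left multiplication by a fixed element $c$: I want $L_{c*a}L_{c*b}^{-1}\in N$. Since $Q$ is a rack, $L_{c*a}=L_cL_aL_c^{-1}$ and $L_{c*b}=L_cL_bL_c^{-1}$ by \eqref{L_f(a)}, so $L_{c*a}L_{c*b}^{-1}=L_c(L_aL_b^{-1})L_c^{-1}$; this lies in $N$ because $N$ is normal in $\lmlt(Q)$ and $L_c\in\lmlt(Q)$. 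The same computation with $L_c$ replaced by $L_c^{-1}$ gives $L_{c\ld a}L_{c\ld b}^{-1}=L_c^{-1}(L_aL_b^{-1})L_c\in N$, so compatibility with $\ld$ on the left side is free.

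The remaining case — compatibility in the right argument, i.e. $(a*c)\,\con N\,(b*c)$ — is the one requiring a genuine idea, and I expect it to be the main (though still short) obstacle. Here I would compute $L_{a*c}L_{b*c}^{-1}$ using $L_{a*c}=L_aL_cL_a^{-1}$ and $L_{b*c}=L_bL_cL_b^{-1}$, obtaining
\[
L_{a*c}L_{b*c}^{-1}=L_aL_cL_a^{-1}\,L_bL_c^{-1}L_b^{-1}.
\]
The trick is to insert the hypothesis $h:=L_aL_b^{-1}\in N$ by rewriting this as a product of $\lmlt(Q)$-conjugates of $h$: write $L_aL_b^{-1}=h$, and then massage the word so that each appearance of an asymmetry between $a$ and $b$ becomes a conjugate of $h^{\pm1}$, e.g. using $L_a^{-1}L_b=L_a^{-1}h^{-1}L_a\cdot$(something), and $L_cL_a^{-1}L_bL_c^{-1}=L_c(L_a^{-1}h^{-1}L_a)L_c^{-1}$, etc. Since $N$ is normal in $\lmlt(Q)$, every such conjugate lies in $N$, and hence so does the product. (Alternatively, one can invoke the fact that $L_aL_b^{-1}\in N$ and $N\unlhd\lmlt(Q)$ imply $L_b^{-1}L_a=L_b^{-1}(L_aL_b^{-1})L_b\in N$, so $a$ and $b$ are "interchangeable modulo $N$" in any $\lmlt$-word, which makes the bookkeeping cleaner.) Finally, the $\ld$-on-the-right case is handled by the symmetric computation with $L_c$ replaced by $L_c^{-1}$. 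This completes all four compatibility checks and hence the proof.
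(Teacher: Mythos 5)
Your proposal is correct and follows essentially the same route as the paper: compatibility in the left argument via $L_{c*a}L_{c*b}^{-1}=L_c(L_aL_b^{-1})L_c^{-1}$ and normality of $N$ in $\lmlt(Q)$, and compatibility in the right argument by decomposing $L_{a*c}L_{b*c}^{-1}=L_aL_cL_a^{-1}L_bL_c^{-1}L_b^{-1}$ into a product of $\lmlt(Q)$-conjugates of $h^{\pm1}$ with $h=L_aL_b^{-1}$. For completeness, your sketched ``massaging'' closes up as $L_{a*c}L_{b*c}^{-1}=(gh^{-1}g^{-1})\,h$ with $g=L_aL_cL_a^{-1}$, which is the same kind of factorization the paper writes in the form $(L_aL_b^{-1})\bigl(L_bL_c(L_a^{-1}L_b)L_c^{-1}L_b^{-1}\bigr)$.
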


\begin{proof}
Assume $a\, \con N \, b$, i.e., $L_aL_b^{-1}\in N$, and let $c\in Q$. Since $N$ is normal in $\lmlt(Q)$,
\[ L_{c*a}L_{c*b}^{-1}=L_cL_aL_c^{-1}L_cL_b^{-1}L_c^{-1}=L_cL_aL_b^{-1}L_c^{-1} \in N,\]
hence $(c*a)\, \con N \, (c*b)$, and similarly, $(c\ld a)\, \con N \, (c\ld b)$. On the other hand,
\[ L_{a*c}L_{b*c}^{-1} = (L_a L_c L_a^{-1})( L_b L_c^{-1} L_b^{-1}) = \underbrace{(L_a L_b^{-1})}_{\in N} \underbrace{(L_b L_c (L_a^{-1} L_b) L_c^{-1} L_b^{-1})}_{\in N}\in N, \]
hence $(a*c)\, \con N \, (b*c)$, and similarly, $(a\ld c)\, \con N \, (b\ld c)$.
\end{proof}

\begin{proposition}\label{p:con_N}
Let $Q$ be a rack and and $N\in\N(Q)$. Then
\begin{enumerate}
	\item $\dis_{\con N}\leq N\leq\dis^{\con N}$,
	\item $\con{N}=1_Q$ if and only if $N=\dis(Q)$,
	\item if $N=\bigcap_{i\in I} N_i$, $N_i\in\N(Q)$, then $\con{N}=\bigwedge_{i\in I} \con{N_i}$,
	\item $\O_N\leq \con N$.
\end{enumerate}
\end{proposition}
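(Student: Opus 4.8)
The plan is to prove the four parts essentially in the order listed, as each is a short computation once the relevant definitions are unwound; the Galois-type duality between the operators $\dis_{-}$ and $\con{-}$ (together with Lemmas~\ref{l:kernel} and~\ref{l:words in Dis}) does all the work. For part~(1), I would argue both inclusions separately. For $\dis_{\con N}\le N$: every generator of $\dis_{\con N}$ has the form $L_aL_b^{-1}$ with $a\,\con N\,b$, which by definition of $\con N$ means exactly $L_aL_b^{-1}\in N$; since $N\in\N(Q)$ is a genuine subgroup, it contains all products of such generators, hence $\dis_{\con N}\le N$. For $N\le\dis^{\con N}$: take $h\in N$; I must show $h(a)\,\con N\,a$ for every $a\in Q$, i.e. $L_{h(a)}L_a^{-1}\in N$. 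Using the rack identity~\eqref{L_f(a)} (valid because $N\le\dis(Q)\le\lmlt(Q)\le\aut Q$ for racks, so $h$ is an automorphism), $L_{h(a)}=hL_ah^{-1}$, whence $L_{h(a)}L_a^{-1}=hL_ah^{-1}L_a^{-1}=[h,L_a]\in N$ because $N$ is normal in $\lmlt(Q)$. So $h\in\dis^{\con N}$ by the characterization in Lemma~\ref{l:kernel}. (Strictly, Lemma~\ref{l:kernel} says $\dis^{\con N}=\{h\in\dis(Q):h(a)\,\con N\,a\ \forall a\}$, and $h\in N\le\dis(Q)$, so this is exactly what we checked.)

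For part~(2), the forward direction: if $\con N=1_Q$ then $L_aL_b^{-1}\in N$ for all $a,b\in Q$, and these elements generate $\dis(Q)$ by definition, so $N=\dis(Q)$. Conversely, if $N=\dis(Q)$ then $L_aL_b^{-1}\in N$ for all $a,b$, so $a\,\con N\,b$ for all $a,b$, i.e. $\con N=1_Q$. For part~(3), I would show $\con{\bigcap_i N_i}=\bigwedge_i\con{N_i}$ by a direct element chase: $(a,b)\in\con{\bigcap N_i}$ iff $L_aL_b^{-1}\in\bigcap_i N_i$ iff $L_aL_b^{-1}\in N_i$ for all $i$ iff $(a,b)\in\con{N_i}$ for all $i$ iff $(a,b)\in\bigcap_i\con{N_i}=\bigwedge_i\con{N_i}$ (the meet in the congruence lattice being intersection). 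One should note in passing that $\bigcap_i N_i$ again lies in $\N(Q)$, so $\con{\bigcap N_i}$ is defined.

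For part~(4), I want $\O_N\le\con N$: if $a\,\O_N\,b$ then $b=f(a)$ for some $f\in N$, and exactly as in the proof of part~(1) the rack automorphism identity gives $L_bL_a^{-1}=L_{f(a)}L_a^{-1}=[f,L_a]\in N$ by normality of $N$ in $\lmlt(Q)$, so $a\,\con N\,b$. I do not anticipate a serious obstacle; the one point deserving care is keeping straight \emph{which} normality is being invoked at each step — $N$ is normal in $\lmlt(Q)$, not merely in $\dis(Q)$, and that is precisely what makes the commutator $[f,L_a]$ and the conjugates $[h,L_a]$ land back in $N$ — and making sure that the automorphism identity~\eqref{L_f(a)} is only applied to elements of $\lmlt(Q)$, which are automorphisms since $Q$ is a rack. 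Part~(4) is in fact a consequence of part~(1) combined with Proposition~\ref{p:dis_alpha2}(3) applied in a suitable guise, but the direct argument above is cleaner and self-contained.
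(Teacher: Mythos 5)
Your proposal is correct and follows essentially the same route as the paper: part (1) by checking generators for one inclusion and using $L_{h(a)}L_a^{-1}=hL_ah^{-1}L_a^{-1}\in N$ (normality in $\lmlt(Q)$) plus Lemma~\ref{l:kernel} for the other, with (2)--(4) as direct consequences or element chases. The only cosmetic differences are that you prove (2) and (4) by unwinding definitions where the paper cites part (1), which is equivalent.
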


\begin{proof}
(1) For the first inequality, note that $\dis_{\con N}$ is generated by all pairs $L_aL_b^{-1}$ which belong to $N$.
For the second inequality, let $h\in N$. For every $a\in Q$, we have 
\begin{displaymath}
L_{h(a)} L_a^{-1}= h L_a h^{-1} L_a^{-1}\in N,
\end{displaymath}
because $N$ is normal in $\lmlt(Q)$, and thus $h(a) \, \con N \, a$. Using Lemma \ref{l:kernel}, $h\in\dis^{\con N}$.

(2) Clearly $\con{\dis(Q)}=1_Q$. If $\con{N}=1_Q$, then $\dis(Q)=\dis_{1_Q}=\dis_{\con N}\leq N$ by (1).

(3) Clearly $L_a L_b^{-1}\in N$ if and only if $L_aL_b^{-1}\in N_i$ for every $i\in I$.

(4) If follows from item (1) using Lemma \ref{l:kernel}.
\end{proof}

As we shall see soon, the $\dis$ and $\con{}$ operators form a monotone Galois connection between $\Con(Q)$, the congruence lattice of a rack $Q$, and the lattice $\N(Q)$.
Note that $\lambda_Q\leq\con N$ for every $N$, and if $\alpha\leq\lambda_Q$ then $\dis_\alpha=1$. Therefore, the smaller the Cayley kernel is, the finer properties of the connection one can expect.

\subsection{A Galois connection}

Recall that a \emph{monotone Galois connection} is a pair of monotone functions between two ordered sets, $F:X\to Y$, $G:Y\to X$, such that $F(x)\leq y$ if and only if $x\leq G(y)$. Then, $GF$ is a closure operator  on $X$, $FG$ is a kernel operator on $Y$, and $FGF=F$ and $GFG=G$. (See \cite[Section 2.5]{Bergman} for details.)

\begin{proposition}\label{galois_connection}
Let $Q$ be a rack. Then $\alpha\mapsto\dis_\alpha$ and $N\mapsto\con N$ is a monotone Galois connection between $\Con(Q)$ and $\N(Q)$.
\end{proposition}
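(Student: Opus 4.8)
The plan is to verify the defining adjunction property of a monotone Galois connection: both maps are monotone, and $\dis_\alpha \leq N$ holds in $\N(Q)$ if and only if $\alpha \leq \con N$ holds in $\Con(Q)$. Monotonicity of $\alpha \mapsto \dis_\alpha$ is immediate from the definition, since enlarging $\alpha$ enlarges the generating set $\{L_aL_b^{-1} : a\,\alpha\,b\}$ of the normal subgroup $\dis_\alpha$. Monotonicity of $N \mapsto \con N$ is equally immediate: if $N \leq M$ and $L_aL_b^{-1}\in N$, then $L_aL_b^{-1}\in M$, so $\con N \leq \con M$. (Both $\con N$ and $\con M$ are genuine congruences by the lemma preceding Proposition \ref{p:con_N}, and $\dis_\alpha \in \N(Q)$ by construction, so the maps really do land in the claimed lattices.)

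For the adjunction itself, first suppose $\dis_\alpha \leq N$. If $a\,\alpha\,b$, then $L_aL_b^{-1}$ is one of the generators of $\dis_\alpha$, hence lies in $\dis_\alpha \leq N$, so $a\,\con N\,b$; thus $\alpha \leq \con N$. Conversely, suppose $\alpha \leq \con N$. Then for every pair $a\,\alpha\,b$ we have $a\,\con N\,b$, i.e.\ $L_aL_b^{-1}\in N$. Since $N$ is normal in $\lmlt(Q)$ and $\aut{Q}\cap \lmlt(Q)$-conjugation fixes $N$, all the conjugates $fL_aL_b^{-1}f^{-1}$ with $f\in\lmlt(Q)$ also lie in $N$; these are exactly the generators of $\dis_\alpha$ as defined in Section \ref{s:dis_alpha}, so $\dis_\alpha \leq N$. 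This establishes the equivalence $\dis_\alpha \leq N \iff \alpha \leq \con N$, which together with monotonicity is precisely the definition of a monotone Galois connection.

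I do not expect any serious obstacle here; the statement is essentially a bookkeeping consequence of the definitions of $\dis_\alpha$ and $\con N$, and the only point requiring a word of care is that the generators of $\dis_\alpha$ include the $\lmlt(Q)$-conjugates, which is harmless because $N$ is assumed normal in $\lmlt(Q)$ (this is where the choice of $\N(Q)$ as the codomain lattice — normal subgroups of $\dis(Q)$ that are also normal in $\lmlt(Q)$ — is used). One could optionally append the standard formal consequences recorded in the paragraph before the proposition: $\con{\dis_\alpha}$ is a closure operator on $\Con(Q)$ with $\alpha \leq \con{\dis_\alpha}$, the composite $\dis_{\con N}$ is a kernel operator on $\N(Q)$ with $\dis_{\con N}\leq N$ (compare Proposition \ref{p:con_N}(1)), and the triangle identities $\dis_{\con{\dis_\alpha}}=\dis_\alpha$ and $\con{\dis_{\con N}}=\con N$ hold automatically.
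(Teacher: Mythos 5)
Your proof is correct and follows essentially the same route as the paper: check monotonicity and verify the adjunction $\dis_\alpha\leq N\iff\alpha\leq\con N$ directly from the definitions. The only cosmetic difference is that you handle the conjugate generators $fL_aL_b^{-1}f^{-1}$ via normality of $N$ in $\lmlt(Q)$, whereas the paper can skip this because for racks the generating set $\{L_aL_b^{-1}:a\,\alpha\,b\}$ is already closed under such conjugation, so $\dis_\alpha=\langle L_aL_b^{-1}:a\,\alpha\,b\rangle$; both arguments are valid.
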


\begin{proof}
Both mappings are indeed monotone. We prove that $\dis_\alpha\leq N$ if and only if $\alpha\leq \con N$.

$(\Rightarrow)$ 
If $a\,\alpha\,b$, then $L_aL_b^{-1}\in\dis_\alpha\subseteq N$, and thus $a\,\con N\, b$.

$(\Leftarrow)$ 
We need to show that $L_a L_b ^{-1} \in N$ whenever $a\, \alpha \, b$. But $a\,\alpha\,b$ implies $a\,\con N\, b$, and thus $L_aL_b^{-1}\in N$ by definition.
\end{proof}

In particular, the closure property says that $\alpha\leq\con {\dis_{\alpha}}$, and the kernel property says that $\dis_{\con N}\leq N$.
 
For a given rack $Q$, the connection of Proposition \ref{galois_connection} is rarely bijective. The Cayley kernel is one obvious reason: the $\con{}$ operator cannot reach congruences below $\lambda_Q$, and the $\dis$ operator maps all congruences below $\lambda_Q$ to the trivial subgroup. Other reasons must exist, too, since, for example, neither operator is 1-1 or onto in the non-principal latin quandles of order $27$ mentioned earlier.


The connection of Proposition \ref{galois_connection} can recognize certain properties of factors. For example, faithfulness.

\begin{proposition}\label{p:con_dis}
Let $Q$ be a rack and $\alpha$ its congruence. Then $Q/\alpha$ is faithful if and only if $\alpha=\con {\dis^{\alpha}}$.
\end{proposition}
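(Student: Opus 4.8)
The plan is to unwind both directions using the characterization of $\dis^\alpha$ as a kernel (Lemma~\ref{l:kernel}) together with the transport properties of $\pi_\alpha$ from Proposition~\ref{p:dis_alpha1}. Recall that $Q/\alpha$ is faithful means $\lambda_{Q/\alpha}=0_{Q/\alpha}$, i.e.\ for $[a]_\alpha,[b]_\alpha\in Q/\alpha$ we have $L_{[a]_\alpha}=L_{[b]_\alpha}$ only when $[a]_\alpha=[b]_\alpha$. The Galois connection already gives $\alpha\le\con{\dis^\alpha}$ formally? — careful: the closure property is $\alpha\le\con{\dis_\alpha}$, not $\con{\dis^\alpha}$, and in general $\dis_\alpha\le\dis^\alpha$ only, so $\con{\dis_\alpha}\le\con{\dis^\alpha}$ and hence $\alpha\le\con{\dis^\alpha}$ holds \emph{unconditionally}. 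Thus the content of the proposition is the reverse inclusion $\con{\dis^\alpha}\le\alpha$, and we must show this is equivalent to faithfulness of $Q/\alpha$.

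For the direction assuming $Q/\alpha$ faithful, I would take $(a,b)\in\con{\dis^\alpha}$, i.e.\ $L_aL_b^{-1}\in\dis^\alpha=\ker(\pi_\alpha|_{\dis(Q)})$. Applying $\pi_\alpha$ gives $L_{[a]_\alpha}L_{[b]_\alpha}^{-1}=1$ in $\lmlt(Q/\alpha)$, that is $L_{[a]_\alpha}=L_{[b]_\alpha}$; faithfulness of $Q/\alpha$ then forces $[a]_\alpha=[b]_\alpha$, i.e.\ $a\,\alpha\,b$. Combined with the unconditional inclusion above, $\con{\dis^\alpha}=\alpha$.

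For the converse, assume $\alpha=\con{\dis^\alpha}$ and show $Q/\alpha$ is faithful. Suppose $L_{[a]_\alpha}=L_{[b]_\alpha}$ for some $a,b\in Q$, i.e.\ $([a]_\alpha,[b]_\alpha)\in\lambda_{Q/\alpha}$. Then $\pi_\alpha(L_aL_b^{-1})=L_{[a]_\alpha}L_{[b]_\alpha}^{-1}=1$, so $L_aL_b^{-1}\in\ker\pi_\alpha\cap\dis(Q)=\dis^\alpha$, hence $(a,b)\in\con{\dis^\alpha}=\alpha$, so $[a]_\alpha=[b]_\alpha$. Since $L_Q$ of a rack factors through $Q/\alpha$, this shows $\lambda_{Q/\alpha}$ is trivial, i.e.\ $Q/\alpha$ is faithful.

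I do not expect a serious obstacle here: both directions are essentially the same computation with $\pi_\alpha$, read in opposite orders. The one point to state carefully is that the ``easy'' inclusion is $\alpha\le\con{\dis^\alpha}$ (not an equality coming directly from the Galois connection, which only addresses $\dis_\alpha$), and this follows either from monotonicity of $\con{}$ applied to $\dis_\alpha\le\dis^\alpha$ together with the closure property $\alpha\le\con{\dis_\alpha}$, or directly from Lemma~\ref{l:kernel}: if $a\,\alpha\,b$ then $L_aL_b^{-1}\in\dis_\alpha\le\dis^\alpha$. The rest is bookkeeping with the surjection $\pi_\alpha$ and the definition of $\lambda_{Q/\alpha}$.
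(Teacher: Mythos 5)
Your proof is correct and follows essentially the same route as the paper: the paper condenses both directions into the single identity $\con{\dis^{\alpha}}/\alpha=\lambda_{Q/\alpha}$, obtained from exactly the equivalence $L_aL_b^{-1}\in\dis^{\alpha}\Leftrightarrow L_{[a]}=L_{[b]}$ that you use, and you correctly supply the (implicitly needed) inclusion $\alpha\le\con{\dis^{\alpha}}$. No gaps.
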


\begin{proof}
We prove that $\con{\dis^{\alpha}}/\alpha=\lambda_{Q/\alpha}$. Indeed, for $[a],[b]\in Q/\alpha$, 
\begin{displaymath}
[a]\, (\con{\dis^{\alpha}}/\alpha) \, [b] \ \Longleftrightarrow \ a\, \con{\dis^{\alpha} } \, b \ \Longleftrightarrow \ L_a L_b^{-1}\in \dis^{\alpha}   \ \Longleftrightarrow \ L_{[a]}=L_{[b]}.
\end{displaymath}
Therefore, $Q/\alpha$ is faithful if and only if $\con{\dis^\alpha }/\alpha=0_{Q/\alpha}$, that is, if and only if $\alpha=\con {\dis^{\alpha}}$.
\end{proof}

Proposition \ref{p:con_dis} implies that if $Q/\alpha$ is faithful and $\dis_\alpha\neq \dis^\alpha$, then $\alpha=\con{\dis_\alpha}=\con{\dis^\alpha}$, so the $\con{}$ operator cannot be injective.

\begin{proposition}
Let $Q$ be a quandle such that every factor of $Q$ is faithful. Then the $\dis$ operator is injective and the $\con{}$ operator is surjective.
\end{proposition}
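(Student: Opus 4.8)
The statement to prove is: if $Q$ is a quandle all of whose factors are faithful, then the $\dis$ operator $\alpha\mapsto\dis_\alpha$ is injective on $\Con(Q)$ and the $\con{}$ operator $N\mapsto\con N$ is surjective onto $\Con(Q)$. The key tool is Proposition \ref{p:con_dis}, together with the fact that in a Galois connection, the ``closure'' composite $\alpha\mapsto\con{\dis_\alpha}$ is a closure operator, and a closure operator is the identity precisely on the closed elements; the image of $\con{}$ equals the set of closed congruences, and injectivity of $\dis$ on that set follows from the general theory. Concretely, I would first show that every congruence of $Q$ is closed, i.e. $\alpha=\con{\dis_\alpha}$ for all $\alpha\in\Con(Q)$.

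\textbf{Step 1: every congruence is closed.} Fix $\alpha\in\Con(Q)$. By hypothesis $Q/\alpha$ is faithful, so Proposition \ref{p:con_dis} gives $\alpha=\con{\dis^\alpha}$. I must upgrade this to $\alpha=\con{\dis_\alpha}$, i.e. I must show $\con{\dis^\alpha}=\con{\dis_\alpha}$. By Proposition \ref{galois_connection}/its corollary, $\dis_\alpha\le\dis^\alpha$, hence $\con{\dis_\alpha}\le\con{\dis^\alpha}=\alpha$; and the closure property gives $\alpha\le\con{\dis_\alpha}$. Therefore $\alpha=\con{\dis_\alpha}$, as desired. (Note this argument in fact shows $\dis_\alpha$ and $\dis^\alpha$ determine the same congruence here, even when they differ as groups.)

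\textbf{Step 2: $\con{}$ is surjective.} Given any $\alpha\in\Con(Q)$, set $N=\dis_\alpha\in\N(Q)$. By Step 1, $\con N=\con{\dis_\alpha}=\alpha$. So every congruence lies in the image of $\con{}$.

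\textbf{Step 3: $\dis$ is injective.} Suppose $\dis_\alpha=\dis_\beta$ for congruences $\alpha,\beta$. Applying $\con{}$ and using Step 1, $\alpha=\con{\dis_\alpha}=\con{\dis_\beta}=\beta$.

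\textbf{Main obstacle.} The only non-formal point is Step 1, specifically the replacement of $\dis^\alpha$ by $\dis_\alpha$ inside the $\con{}$ operator. Proposition \ref{p:con_dis} is stated for $\dis^\alpha$, and in general $\dis_\alpha\subsetneq\dis^\alpha$, so one might worry the two give different equivalences. The resolution is the sandwich $\dis_\alpha\le\dis^\alpha$ combined with the closure inequality $\alpha\le\con{\dis_\alpha}$ from the Galois connection: these two squeeze $\con{\dis_\alpha}$ between $\alpha$ and $\con{\dis^\alpha}=\alpha$. Everything else is a formal consequence of the properties of monotone Galois connections recalled before Proposition \ref{galois_connection} (that $\con{}\circ\dis$ is a closure operator whose fixed points form the image of $\con{}$ and on which $\dis$ is injective); I would phrase Steps 2 and 3 directly as above to keep the argument self-contained rather than invoking that abstract machinery.
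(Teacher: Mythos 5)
Your proof is correct and follows essentially the same route as the paper: the paper's argument is precisely the sandwich $\alpha\leq\con{\dis_\alpha}\leq\con{\dis^\alpha}=\alpha$ (your Step 1), from which it concludes that $\con{}$ is a left inverse of $\dis$, hence $\dis$ is injective and $\con{}$ is surjective (your Steps 2 and 3). No gaps.
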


\begin{proof}
By Proposition \ref{p:con_dis}, we have $\alpha\leq\con{\dis_\alpha}\leq\con{\dis^\alpha}=\alpha$, and thus $\con{\dis_\alpha}=\alpha$. This means that the $\con{}$ operator is left inverse to the $\dis$ operator. Hence $\dis$ must be injective and $\con{}$ must be surjective.
\end{proof}

\begin{remark}
Let $Q$ be a rack. Then $\alpha \mapsto  \dis^\alpha$ and $N\mapsto \mathcal{O}_N $ is also a monotone Galois connection between $\Con(Q)$ and $\N(Q)$. Indeed, from the characterization of $\dis^\alpha$ in Lemma \ref{l:kernel} it is easy to check that $N\leq \dis^\alpha$ if and only if $\mathcal{O}_N\leq \alpha$. At the moment, we have no use for this connection, and therefore focus on the one from Proposition \ref{galois_connection}. 
\end{remark}

\subsection{Quandles with congruences determined by subgroups}

A rack is said to have \emph{congruences determined by subgroups} (shortly, \emph{CDSg}), if the connection from Proposition \ref{galois_connection} is a lattice isomorphism $\Con(Q)\simeq\N(Q)$. 

\begin{example}
All simple quandles of size $>2$ have CDSg, since both lattices have just two elements, as proved in \cite[Lemma 2]{Joyce-simple}.
\end{example}

\begin{example}\label{ex:latin_cdsg}
Finite affine latin quandles have CDSg. They are polynomially equivalent to modules, and therefore, congruences correspond to submodules, which are exactly the elements of $\N(Q)$ (see \cite[Section 2.3]{Sta-latin} for an explanation of polynomial equivalence in the context of quasigroups).
A similar argument works more generally, for all principal latin quandles, which are polynomially equivalent to algebraic structures of the form $(\dis(Q),\cdot,^{-1},1,\widehat{L_e})$ where $\widehat{L_e}$ denotes the inner automorphism given by $L_e$.
\end{example}

In the rest of the section, we characterize racks with CDSg and show that they are actually connected quandles.

\begin{lemma}\label{star_under_H}
Let $Q$ be a rack and $\alpha$ its congruence. If $Q$ has CDSg, then $Q/\alpha$ has CDSg.
\end{lemma}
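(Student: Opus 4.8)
The plan is to transport the Galois connection of $Q$ down along the quotient map $\pi_\alpha$. Recall $\Con(Q/\alpha)=\{\beta/\alpha:\alpha\le\beta\in\Con(Q)\}$, and that by Proposition~\ref{p:dis_alpha1}(1) we have $\dis_{\beta/\alpha}=\pi_\alpha(\dis_\beta)$ whenever $\alpha\le\beta$. On the group side, $\pi_\alpha$ restricts to a surjection $\dis(Q)\to\dis(Q/\alpha)$ with kernel $\dis^\alpha$, and $\pi_\alpha:\lmlt(Q)\to\lmlt(Q/\alpha)$ is onto, so the correspondence theorem identifies $\N(Q/\alpha)$, as a lattice, with the up-set $\{N\in\N(Q):\dis^\alpha\le N\}$ of $\N(Q)$, via $N\mapsto\pi_\alpha(N)$. (Note $\dis^\alpha\in\N(Q)$, since $[\dis^\alpha,\lmlt(Q)]\le\dis_\alpha\le\dis^\alpha$ by Proposition~\ref{p:dis_alpha2}(1).) Under these identifications a short computation shows that the operators of $Q/\alpha$ are restrictions of those of $Q$: the preimage of $\dis_{\beta/\alpha}$ is $\dis_\beta\dis^\alpha$, and $\con{\pi_\alpha(N)}=\con N/\alpha$ for $N\supseteq\dis^\alpha$. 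So it remains to prove that the resulting maps between $\Con(Q/\alpha)$ and $\N(Q/\alpha)$ are mutually inverse bijections.

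Now invoke the hypothesis. Since $Q$ has CDSg, $\dis$ and $\con{}$ are mutually inverse lattice isomorphisms $\Con(Q)\leftrightarrow\N(Q)$; put $\gamma:=\con{\dis^\alpha}$, so $\dis_\gamma=\dis^\alpha$ and $\alpha\le\gamma$ (apply $\con{}$ to $\dis_\alpha\le\dis^\alpha$). Then the interval $[\dis^\alpha,\dis(Q)]$ of $\N(Q)$ corresponds under $\con{}$ to $[\gamma,1_Q]$, and using $\dis_\beta\dis^\alpha=\dis_\beta\dis_\gamma=\dis_{\beta\vee\gamma}$ (Proposition~\ref{p:dis_alpha1}(2)) the $\dis$-operator of $Q/\alpha$ becomes, after all identifications, the map
\[ [\alpha,1_Q]\longrightarrow[\gamma,1_Q],\qquad \beta\longmapsto\beta\vee\gamma. \]
This map is always onto, and it is one-to-one exactly when $\gamma=\alpha$ (otherwise $\alpha$ and $\gamma$ have the same image). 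By Proposition~\ref{p:con_dis} the equality $\gamma=\alpha$ says precisely that $Q/\alpha$ is faithful; equivalently, by the remark following that proposition together with the injectivity of $\con{}$, that $\dis_\alpha=\dis^\alpha$. Thus the lemma reduces to the statement: \emph{every factor of a rack with CDSg is faithful}, i.e.\ $\dis_\alpha=\dis^\alpha$ for every congruence $\alpha$.

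This last point is the real content, and where I expect the obstacle to be. A harmless first step: $Q$ itself is faithful, since $\dis_{0_Q}=1=\dis_{\lambda_Q}$ forces $0_Q=\lambda_Q$ by injectivity of $\dis$, so $Q$ is a quandle. For a general $\alpha$ I would argue by contradiction: if $\gamma>\alpha$, pick $a\,\gamma\,b$ with $(a,b)\notin\alpha$; then $L_aL_b^{-1}\in\dis^\alpha=\dis_\gamma$, and since $[\dis^\alpha,\lmlt(Q)]\le\dis_\alpha$ (Proposition~\ref{p:dis_alpha2}(1)) the identity $L_{f(x)}=fL_xf^{-1}$ shows that $\pi_\alpha(L_aL_b^{-1})$ commutes with every $L_{[x]}$, hence lies in $Z(\lmlt(Q/\alpha))\cap\dis(Q/\alpha)$. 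The goal is to combine this with the interplay between $\dis^{\beta}$ and $\O_N$ recorded in Proposition~\ref{p:dis_alpha2}(2),(3) and the injectivity of $\con{}$ to conclude $\pi_\alpha(L_aL_b^{-1})=1$, i.e.\ that this element of $\dis^\alpha$ is already forced into $\dis_\alpha$, whence $a\,\alpha\,b$ — a contradiction. The hard part will be closing exactly this loop: showing that $Q/\alpha$ cannot carry a nontrivial element of $\dis(Q/\alpha)$ that is central in $\lmlt(Q/\alpha)$, which is precisely the place where one must use that \emph{all} congruences of $Q$ are determined by subgroups, not merely $\alpha$ and the congruences above it.
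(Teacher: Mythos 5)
Your transport of the Galois connection along $\pi_\alpha$ is carried out correctly, and it follows the same route as the paper's own proof, which consists of exactly this diagram: $\Con(Q/\alpha)\simeq[\alpha,1_Q]$, $\N(Q/\alpha)\simeq[\dis^\alpha,\dis(Q)]$, and the restrictions of $\dis$ and $\con{}$. Your version is in fact more explicit than the paper's about where the content sits: after the bookkeeping, everything reduces to the single identity $\dis_\alpha=\dis^\alpha$, equivalently to the faithfulness of $Q/\alpha$. Up to that point the proposal is sound.

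The last paragraph, however, does not close this reduction, and the strategy it sketches cannot work. You pick $a\,\gamma\,b$ with $(a,b)\notin\alpha$, where $\gamma=\con{\dis^\alpha}$; by definition this means $L_aL_b^{-1}\in\dis^\alpha$, and $\dis^\alpha$ \emph{is} the kernel of $\pi_\alpha$ restricted to $\dis(Q)$. Hence $\pi_\alpha(L_aL_b^{-1})=1$ holds automatically from the hypothesis: the element whose centrality in $\lmlt(Q/\alpha)$ you propose to exploit is already the identity, so ``concluding $\pi_\alpha(L_aL_b^{-1})=1$'' yields no contradiction. Moreover, $\pi_\alpha(L_aL_b^{-1})=1$ only says $L_{[a]}=L_{[b]}$, i.e.\ $[a]\,\lambda_{Q/\alpha}\,[b]$; it does not give $[a]=[b]$, and the inference ``whence $a\,\alpha\,b$'' conflates membership in $\dis^\alpha$ (which is the hypothesis) with membership in $\dis_\alpha$ (which, via $\con{\dis_\alpha}=\alpha$, is what would be needed). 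The statement you propose as the remaining obstacle --- that $\dis(Q/\alpha)$ contains no nontrivial element central in $\lmlt(Q/\alpha)$ --- is neither equivalent to faithfulness of $Q/\alpha$ nor true in general. So the key claim that every factor of a CDSg rack is faithful, which you correctly identify as the real content of the lemma, remains unproved, and the argument has a genuine gap.
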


\begin{proof}
We know that $\Con(Q/\alpha)\simeq [\alpha,1_Q]$ and $\N(Q/\alpha)\simeq [\dis^\alpha,\dis(Q)]$. The mapping $\dis$ restricted to this interval is an isomorphism as well (and restricted $\con{}$ is its inverse), and the following diagram is commutative:
\begin{center}
$\xymatrixcolsep{8pc}\xymatrix{
[\alpha,1_Q]  \ar[r]^{\dis}\ar[d] & [\dis^\alpha,\dis(Q)] \ar[d] \\
\Con(Q/\alpha) \ar[r]^{\dis}& \N(Q/\alpha)
}
$
\end{center} 
(the vertical arrows are the canonical isomorphisms).
\end{proof} 

\begin{proposition}\label{p:CDSg}
Let $Q$ be a rack. Then $Q$ has CDSg if and only if the following two conditions hold:
\begin{enumerate}
\item $\dis_\alpha=\dis^\alpha$ for every $\alpha\in\Con(Q)$,
\item every factor of $Q$ is faithful. 
\end{enumerate}
\end{proposition}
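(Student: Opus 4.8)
The statement is an ``if and only if'', so the plan splits into two directions. For the forward direction, assume $Q$ has CDSg. Condition (1) is nearly immediate: by the kernel property of the Galois connection we always have $\dis_{\con N}=N$ when $\con{}$ is onto, and the closure property gives $\con{\dis_\alpha}=\alpha$ when $\dis$ is injective; combining these with Proposition \ref{p:con_dis} (which says $Q/\alpha$ faithful iff $\alpha=\con{\dis^\alpha}$) should force $\dis_\alpha=\dis^\alpha$. More concretely, since $\dis_\alpha\le\dis^\alpha$ always, and both lie in $\N(Q)$, applying the bijective $\con{}$ operator it suffices to show $\con{\dis_\alpha}=\con{\dis^\alpha}$; the left side equals $\alpha$ by the isomorphism, and the right side equals $\alpha$ provided $Q/\alpha$ is faithful — so I first need condition (2). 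For (2): fix $\alpha$; by Lemma \ref{star_under_H}, $Q/\alpha$ has CDSg, so it is enough to prove that a rack with CDSg is faithful. If $Q$ has CDSg then $\con{}$ is injective, but $\con{\dis_{\lambda_Q}}=\con 1 = \lambda_Q$ (using $\dis_{\lambda_Q}=1$) while also $\con{1}=\dots$ — the point is that $\lambda_Q$ and $0_Q$ both get sent by $\dis$ to the trivial subgroup (since $\dis_\alpha=1$ for every $\alpha\le\lambda_Q$), so injectivity of $\dis$ forces $\lambda_Q=0_Q$, i.e. $Q$ is faithful. This handles the forward direction.

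For the converse, assume (1) and (2). I want to show the Galois connection is a lattice isomorphism, i.e. $\con{\dis_\alpha}=\alpha$ for all $\alpha\in\Con(Q)$ and $\dis_{\con N}=N$ for all $N\in\N(Q)$. The first identity follows immediately from the chain $\alpha\le\con{\dis_\alpha}\le\con{\dis^\alpha}=\alpha$, where the last equality is Proposition \ref{p:con_dis} applied using faithfulness of $Q/\alpha$, and the middle inequality uses (1). For the second identity, the inclusion $\dis_{\con N}\le N$ is the kernel property (Proposition \ref{p:con_N}(1)); for $N\le\dis_{\con N}$, note $N\le\dis^{\con N}$ by Proposition \ref{p:con_N}(1), and then apply (1) to the congruence $\alpha=\con N$ to get $\dis^{\con N}=\dis_{\con N}$. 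Thus both composites are the identity, so $\dis$ and $\con{}$ are mutually inverse bijections; being monotone bijections between lattices whose inverses are also monotone, they are lattice isomorphisms. That $Q$ is then a connected quandle (mentioned in the surrounding text) would follow from separate considerations, but the proposition as stated only asserts the CDSg characterization, so I do not need it here.

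The one genuine subtlety — and the step I expect to be the main obstacle — is establishing condition (2), i.e. that CDSg forces \emph{every} factor to be faithful, not just $Q$ itself. The argument via Lemma \ref{star_under_H} is the right move: it reduces the claim for $Q/\alpha$ to the claim ``CDSg $\Rightarrow$ faithful'' applied to the quotient, which is the base case I sketched above using injectivity of $\dis$ on the interval $[0_Q,\lambda_Q]$. I should be careful that $\lambda_Q$ really is a congruence here (it is, for racks, as noted in Section \ref{s:2}), so that the $\dis$ operator is defined on it and the collapse argument is legitimate; and I should double-check that Lemma \ref{star_under_H}'s hypothesis is exactly ``$Q$ has CDSg'', which it is. Apart from that, everything is bookkeeping with the Galois-connection identities already proved in Propositions \ref{p:con_N}, \ref{galois_connection}, and \ref{p:con_dis}.
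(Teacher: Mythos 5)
Your proposal is correct and takes essentially the same route as the paper: the forward direction collapses $\lambda_Q$ to $0_Q$ via injectivity of $\dis$, passes to factors via Lemma~\ref{star_under_H}, and deduces $\dis_\alpha=\dis^\alpha$ from $\con{\dis_\alpha}=\alpha=\con{\dis^\alpha}$; the converse uses the sandwich $\dis_{\con N}\leq N\leq\dis^{\con N}$ together with (1) and Proposition~\ref{p:con_dis}. The only difference is presentational — you make explicit the appeal to Lemma~\ref{star_under_H} that the paper leaves implicit in ``hence also every factor of $Q$ is faithful.''
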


\begin{proof}
$(\Rightarrow)$ Since $\dis_{0_Q}=\dis_{\lambda_Q}=1$, necessarily $\lambda_Q=0_Q$, and $Q$ is faithful. Hence also every factor of $Q$ is faithful, and since $\alpha=\con {\dis^\alpha}=\con {\dis_\alpha}$, we have $\dis_\alpha=\dis^\alpha$ for every congruence $\alpha$.

$(\Leftarrow)$ Let $N\in\N(Q)$. Then $\dis_{\con{N}}\leq N \leq \dis^{\con{N}}$, and condition (1) implies that $N=\dis_{\con{N}}$. By Proposition \ref{p:con_dis}, $\con{\dis_\alpha}=\alpha$ for every $\alpha\in\Con(Q)$.
\end{proof}

\begin{proposition}
Let $Q$ be a rack with CDSg. Then $\alpha=\mathcal{O}_{\dis_\alpha}$ for every $\alpha\in\Con(Q)$. In particular, $Q$ is a connected quandle.
\end{proposition}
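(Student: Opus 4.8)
The plan is to extract everything from Proposition~\ref{p:CDSg}, which says that a rack $Q$ with CDSg satisfies $\dis_\alpha=\dis^\alpha$ for every congruence $\alpha$ and that every factor of $Q$ is faithful, combined with the structural identities for the orbit congruences $\O_N$ recorded in Proposition~\ref{p:dis_alpha2}. Note first that $\dis_\alpha\in\N(Q)$, so $\O_{\dis_\alpha}$ is a congruence of $Q$ and the $\dis$ operator may legitimately be applied to it. The inclusion $\O_{\dis_\alpha}\le\alpha$ is then immediate: using $\dis_\alpha=\dis^\alpha$, it is exactly Proposition~\ref{p:dis_alpha2}(3).

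For the reverse inclusion I would use that, under CDSg, the map $\alpha\mapsto\dis_\alpha$ is a lattice isomorphism (with inverse $\con{}$), hence in particular injective. By Proposition~\ref{p:dis_alpha2}(2) we have $\dis^\alpha=\dis^{\O_{\dis^\alpha}}$; applying the identity $\dis_\beta=\dis^\beta$ with $\beta=\alpha$ and with $\beta=\O_{\dis^\alpha}$ turns this into $\dis_\alpha=\dis_{\O_{\dis^\alpha}}$, and injectivity of $\dis$ gives $\alpha=\O_{\dis^\alpha}=\O_{\dis_\alpha}$. (One could instead note $\O_{\dis_\alpha}\le\con{\dis_\alpha}=\alpha$ via Propositions~\ref{p:con_N}(4) and~\ref{galois_connection} and then argue the other way, but routing through $\dis^\alpha$ is shortest.) This establishes $\alpha=\O_{\dis_\alpha}$ for all $\alpha\in\Con(Q)$.

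For the final assertion, observe that CDSg forces $Q$ to be faithful: as in the proof of Proposition~\ref{p:CDSg}, $\dis_{0_Q}=\dis_{\lambda_Q}=1$ forces $\lambda_Q=0_Q$; since every faithful rack is a quandle, $Q$ is a quandle. Taking $\alpha=1_Q$, we have $\dis_{1_Q}=\dis(Q)$, so the first part yields $1_Q=\O_{\dis(Q)}$, i.e. $\dis(Q)$ acts transitively on $Q$; as the actions of $\dis(Q)$ and $\lmlt(Q)$ share the same orbits in any quandle, $\lmlt(Q)$ is transitive and $Q$ is connected.

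I expect no genuine obstacle here beyond careful bookkeeping. The points to keep straight are that $\O_{\dis_\alpha}$ is a congruence, that CDSg supplies simultaneously the identification $\dis_\alpha=\dis^\alpha$ and the injectivity of $\dis$, and that the coincidence of $\dis$- and $\lmlt$-orbits --- needed only for the last sentence --- requires $Q$ to be already known to be a quandle, which is why faithfulness must be invoked first.
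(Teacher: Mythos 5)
Your proof is correct and follows essentially the same route as the paper: both hinge on combining Proposition~\ref{p:dis_alpha2}(2) with the CDSg identity $\dis_\gamma=\dis^\gamma$ to get $\dis_\alpha=\dis_{\O_{\dis_\alpha}}$, and both handle the last claim by specializing to $\alpha=1_Q$ after noting that faithfulness forces $Q$ to be a quandle. The only (immaterial) difference is at the final step of the equality: you invoke injectivity of $\dis$ directly from the lattice-isomorphism definition of CDSg, whereas the paper writes $\beta=\O_{\dis_\alpha}$, deduces $\dis_{\alpha/\beta}=1$, hence $\alpha/\beta\leq\lambda_{Q/\beta}$, and concludes from faithfulness of the factor $Q/\beta$.
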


\begin{proof}
In racks, $L_a=L_{a*a}$ for every $a$, hence non-idempotent racks are never faithful, contradicting Proposition \ref{p:CDSg}(2).
Let $\beta=\mathcal{O}_{\dis_\alpha}$. By Proposition \ref{p:CDSg}(1), $\dis_\alpha=\dis^\alpha$ and $\dis_\beta=\dis^\beta$, hence Proposition \ref{p:dis_alpha2}(2) says that $\dis_\alpha=\dis_\beta$, and thus $\dis_{\alpha/\beta}=1$ and $\alpha/\beta\leq \lambda_{Q/\beta}$. But $Q/\beta$ is faithful, hence $\alpha=\beta$. In particular, for $\alpha=1_Q$ we obtain that $Q$ is connected.
\end{proof}

\section{Universal algebraic concepts}\label{s:4}

\subsection{Left translation terms}\label{s:ltt}

A term is a well-defined expression using variables and operation symbols in a given language; we will write $t(x_1,\dots,x_n)$ for a term using a subset of variables $x_1,\dots,x_n$. Given a term $t(x_1,\dots,x_n)$ and an algebraic structure $A=(A,\ldots)$, the associated \emph{term function} $t^A:A^n\to A$ evaluates the term $t$ in $A$. (See \cite[Section 4.3]{Bergman} for formal definitions).

From now on, we will use the language $\{\ast,\backslash\}$ of left quasigroups. Terms will be considered as labeled rooted binary trees, with inner nodes labeled by operations and leaves by variables. 

\emph{Left translation terms} (shortly, \emph{lt-terms}) are the terms in the language of left quasigroups of the form
\begin{equation}\label{ltt}
t(x_1,\ldots,x_n)=s_1(x_{i_1})\circ_1(s_2(x_{i_2})\circ_2(\ldots(s_{i_m}(x_{i_{m-1}})\circ_{m-1}s_{m}(x_{i_{m}})))),
\end{equation}
where $\circ_j\in\{\ast,\backslash\}$ and $s_i$ is a unary term for $i_j\in\{1,\dots,n\}$. Somewhat less formally, we can write
\[ t(x_1,\ldots,x_n)=L_{s_1(x_{i_1})}^{\eps_1}\cdots L_{s_{m-1}(x_{i_{m-1}})}^{\eps_{m-1}}(s_m(x_{i_{m}})) \]
where $\eps_j=1$ if $\circ_j=*$ and $\eps_j=-1$ if $\circ_j=\ld$
(the expression makes a formal sense in the free left quasigroup over the alphabet $x_1,\dots,x_n$).

Note that the left multiplication group of a left quasigroup consists of all term functions resulting from lt-terms for which all the unary terms are trivial (i.e. $s_j(x_{i_j})=x_{i_j}$). Then, somehow, lt terms are obtained combining unary terms and the elements of the left multiplication group.

A left quasigroup, in which every term function results from some lt-term, will be called \emph{ltt} left quasigroup.
The following fact is well known and crucial for our adaptation of the general commutator theory to racks.

\begin{proposition}
Every rack is an ltt left quasigroup.
\end{proposition}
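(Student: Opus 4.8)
The plan is to show that every term in the language $\{*,\backslash\}$ can be rewritten, using the left quasigroup axioms and left self-distributivity, into the shape \eqref{ltt}. The natural tool is structural induction on the term $t$, viewed as a labeled rooted binary tree. The base case is trivial: a single variable $x_i$ is already an lt-term (with $m=1$ and $s_1$ the trivial unary term). For the induction step, we have $t = t_1 \circ t_2$ with $\circ \in \{*,\backslash\}$ and $t_1, t_2$ already in lt-form by the induction hypothesis.

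The heart of the argument is a normalization lemma: if $u$ is an lt-term and $v$ is an lt-term, then $u * v$ and $u \backslash v$ can be brought back into lt-form. Writing $u = L_{s_1(x_{i_1})}^{\eps_1} \cdots L_{s_{k-1}(x_{i_{k-1}})}^{\eps_{k-1}}(s_k(x_{i_k}))$, the obstacle is that $u$ is not itself of the form $L_a^\eps$ applied to something — it is a composite of left translations applied to a ``tail'' $s_k(x_{i_k})$, which need not be a single variable. So I cannot directly prepend $L_u^{\pm 1}$ to $v$ and be done. The key move is to use \eqref{L_f(a)}, which in a rack gives $L_{a*b} = L_a L_b L_a^{-1}$ and, after rearranging, $L_a L_b = L_{a*b} L_a$ and $L_a L_b^{-1} = L_{a\backslash b}^{-1} \cdot$ (something), i.e. one can ``pull'' a left translation past another at the cost of self-distributing. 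Iterating, any word $L_{c_1}^{\delta_1} \cdots L_{c_r}^{\delta_r}$ in which the $c_j$ are themselves lt-terms can be rewritten so that each ``conjugating prefix'' is absorbed into the symbol being translated: concretely, $L_{s(x)}^{\eps}$, where $s$ is a unary lt-term, can always be re-expressed, because a unary lt-term in a rack is itself built from left translations by variables applied to a variable, and $L_{L_a^\delta(b)} = L_a^\delta L_b L_a^{-\delta}$ by \eqref{L_f(a)}. Thus every subword collapses to a product of honest left translations $L_{x_j}^{\pm 1}$ by variables, applied to a single variable.

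Carrying this out, I would first prove an auxiliary claim: in a rack, every unary term $s(x)$ satisfies $L_{s(x)}^{\eps} = w$ for a word $w$ in the $L_{x_j}^{\pm 1}$ — but since a unary term uses only one variable $x$, in fact $s(x) = L_x^{n}(x) = x*(x*(\cdots))$ or $x\backslash(x\backslash\cdots)$ for some $n \in \Z$ (this is where idempotence of quandles would simplify things, but for general racks one still gets $s(x)$ equal to an iterate of $L_x^{\pm1}$ applied to $x$, hence $L_{s(x)} = L_x^n L_x L_x^{-n} = L_x$, in fact $L_{s(x)}=L_x$ — a pleasant collapse). Then the induction step reduces to showing $L_a^\eps L_b^{\eps'} \cdots$ can be flattened, which is exactly the computation $L_a L_b = L_{a*b}L_a$, $L_a L_b^{-1} = L_{a \backslash b}^{-1} L_a$ (reading \eqref{L_f(a)} appropriately) applied repeatedly to push all the translations to act by variables. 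I expect the main obstacle to be bookkeeping: tracking how the indices $i_j$ and unary terms $s_j$ transform under repeated applications of \eqref{L_f(a)}, and making sure the rewriting terminates (it does, because each application of self-distributivity strictly decreases some well-founded complexity measure of the conjugating prefixes, even though it may increase the total size of the term). Once the flattening lemma is established, the proposition follows immediately from the base case and the induction step. I would remark that this is folklore — essentially the statement that racks are ``term-equivalent'' to a setting where all terms are words in left translations — and cite \cite{J} or \cite{HSV} for the underlying rack identities.
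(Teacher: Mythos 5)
Your argument is correct and is essentially the paper's proof in different dress: both rest on the conjugation identity $L_{f(a)}=fL_af^{-1}$, which the paper packages as four explicit rewriting identities applied top-down to the uppermost non-leaf left subterm, and which you apply bottom-up via structural induction (your observation that $L_u = wL_{s_k(x_{i_k})}w^{-1}$ when $u=w(s_k(x_{i_k}))$ is in lt-form, with $w$ a product of left translations and hence an automorphism, is exactly what makes the induction step close). One small slip that does not affect the argument: the correct rearrangement is $L_aL_b^{-1}=L_{a*b}^{-1}L_a$, not $L_{a\backslash b}^{-1}$ times something.
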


\begin{proof}
Using equation \eqref{L_f(a)}, it is easy to see that the following identites hold in every rack:
\begin{eqnarray*}
(x\ast y)\ast z=x\ast (y\ast (x\ld z), &\quad & (x\ld y)\ast z=x\ld (y\ast (x\ast z), \\
(x\ast y)\ld z=x\ast (y\ld (x\ld z), &\quad & (x\ld y)\ld z=x\ld (y\ld (x\ld z),
\end{eqnarray*}
Using these identities, every term can be transformed to an lt-term, by repeatedly expanding the uppermost left subterm which is not a leaf (see Figure \ref{fig:ltt} for an example). 
\end{proof}

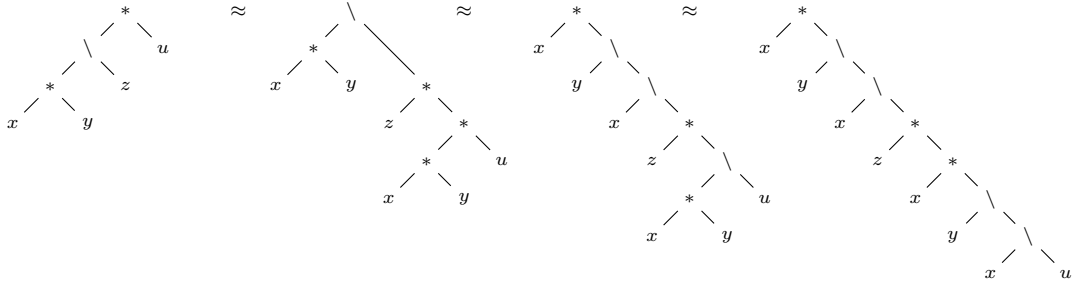
\begin{figure}\label{fig:ltt}

\tiny{
\begin{tikzpicture}[scale =0.5]
\node(a0) at (0,0) {$\ast$};  \node(m) at (3,0) {$\approx$}; 
\node(a1) at (-1,-1) {$\backslash$}; \node(a2) at (1,-1) {$u$}; 
\node(b1) at (-2,-2) {$\ast$}; \node(b2) at (0,-2) {$z$}; 
\node(c1) at (-3,-3) {$x$}; \node(c2) at (-1,-3) {$y$}; 

\node(aa0) at (6,0) {$\backslash$}; 
\node(aa1) at (5,-1) {$\ast$};
\node(bb1) at (4,-2) {$x$}; \node(bb2) at (6,-2) {$y$};
 \node(cc1) at (8,-2) {$\ast$};
\node(d1) at (7,-3) {$z$};  \node(d2) at (9,-3) {$\ast$};
\node(e1) at (8,-4) {$\ast$};  \node(e2) at (10,-4) {$u$};
\node(f1) at (7,-5) {$x$};  \node(f2) at (9,-5) {$y$};

\draw(aa0) -- (aa1);
\draw(aa1) -- (bb1);
\draw(aa0) -- (cc1);
\draw(cc1) -- (d1);

\draw(e1) -- (f1);

\draw(e1) -- (f2);

\draw(cc1) -- (d2);
\draw(d2) -- (e1);
\draw(d2) -- (e2);

\draw(aa1) -- (bb2);
\draw(a0) -- (a1);
\draw(a0) -- (a2);
\draw(a1) -- (b1);
\draw(a1) -- (b2);
\draw(b1) -- (c1);
\draw(b1) -- (c2);

\node(a0) at (12,0) {$\ast$}; \node at (9,0) {$\approx$};
\node(b0) at (11,-1) {$x$}; \node(b1) at (13,-1) {$\backslash$};
\node(c0) at (12,-2) {$y$}; \node(c1) at (14,-2) {$\backslash$};
\node(dd0) at (13,-3) {$x$}; \node(dd1) at (15,-3) {$\ast$};
\node(ee0) at (14,-4) {$z$}; \node(ee1) at (16,-4) {$\backslash$};
\node(ff0) at (15,-5) {$\ast$}; \node(ff1) at (17,-5) {$u$};
\node(gg0) at (14,-6) {$x$}; \node(gg1) at (16,-6) {$y$};

\draw(a0) -- (b0);
\draw(a0) -- (b1);
\draw(b1) -- (c0);
\draw(b1) -- (c1);
\draw(c1) -- (dd0);
\draw(c1) -- (dd1);
\draw(dd1) -- (ee0);
\draw(dd1) -- (ee1);
\draw(ee1) -- (ff0);
\draw(ee1) -- (ff1);
\draw(ff0) -- (gg0);
\draw(ff0) -- (gg1);

\node(f) at (18,0) {$\ast$}; \node at (15,0) {$\approx$};
\node(b1) at (17,-1) {$x$}; \node(b2) at (19,-1) {$\backslash$};
\node(b3) at (18,-2) {$y$}; \node(b4) at (20,-2) {$\backslash$};
\node(b5) at (19,-3) {$x$}; \node(b6) at (21,-3) {$\ast$};
\node(b7) at (20,-4) {$z$}; \node(b8) at (22,-4) {$\ast$};
\node(b9) at (21,-5) {$x$}; \node(b10) at (23,-5) {$\backslash$};
\node(b11) at (22,-6) {$y$}; \node(b12) at (24,-6) {$\backslash$};
\node(b13) at (23,-7) {$x$}; \node(b14) at (25,-7) {$u$};

\draw(f) -- (b1);
\draw(f) -- (b2);
\draw(b2) -- (b3);
\draw(b2) -- (b4);
\draw(b4) -- (b5);
\draw(b4) -- (b6);
\draw(b6) -- (b7);
\draw(b6) -- (b8);
\draw(b8) -- (b9);
\draw(b8) -- (b10);
\draw(b10) -- (b11);
\draw(b10) -- (b12);
\draw(b12) -- (b13);
\draw(b12) -- (b14);

\end{tikzpicture} }
\caption{Transforming the term $((x*y)\ld z)*u$ into a left translation form.}
\end{figure}

The proof suggests that there are many more examples of ltt left quasigroups: any quadruple of identities will work, as long as it ``flattens" the term. For instance, A different class of ltt left quasigroups can be defined by modifying self-distributivity as follows.

\begin{example}
Let $\mathcal C$ be the class of left quasigroups satisfying the identities
\[ (x\ast y)\ast z = x\backslash (y\backslash (x\ast z)) \quad\text{ and }\quad (x\backslash y) \ast z = x\ast (y \backslash (x\backslash z)),\]
resulting from the corresponding rack identities by switching $*$ and $\ld$ on the right-hand side. The ltt property can be proved similarly as for racks.

For involutory left quasigroups, the identities for $\mathcal C$ are equivalent to self-distributivity. However, in $\mathcal C$, $L_{a*a}=L_a^{-1}$, while in racks, $L_{a*a}=L_a$. Hence, the racks in $\mathcal C$ are precisely the involutory racks.
An exhaustive computer search reveals that the smallest member of $\mathcal C$ which is not a rack has 4 elements and it is unique up to isomorphism. Further numbers of members of $\mathcal C$ which are not a rack are 1 of order 5, 8 of order 6, 20 of order 7 and 125 of order 8. The multiplication tables of the two smallest examples are below.

\begin{center}
\begin{tabular}{|cccc| }
\hline
2 & 4 &1 & 3 \\ 
3 & 1 & 4 &2 \\
3 & 1 & 4 &2 \\
2 & 4 &1 &3  \\
\hline
\end{tabular}\qquad\qquad \begin{tabular}{|ccccc|}
\hline
3 & 4 &1 & 5 & 2 \\ 
3 & 2 & 1 &4 & 5 \\
3 & 5 & 1 &2 & 4 \\
3 & 2 &1 &4 & 5  \\
3 & 2 &1 &4 & 5  \\
\hline
\end{tabular}
\end{center}
\end{example}
 
\subsection{The commutator theory}\label{sec:commutator}

Let $A$ be an algebraic structure. The commutator is a binary operation on the lattice $\Con(A)$, defined using the concept of centralization, explained below.
For further study, we refer to \cite{FM,MS}.

Let $\alpha$, $\beta$, $\delta$ be congruences of $A$. We say that \emph{$\alpha$ centralizes $\beta$ over $\delta$}, and write $C(\alpha,\beta;\delta)$, if for every $(n+1)$-ary term operation $t$, every pair $a\,\alpha\,b$ and every $u_1\,\beta\,v_1$, $\dots$, $u_n\,\beta\,v_n$ we have
\[  t^A(a,u_1,\dots,u_n)\,\delta \, t^A(a,v_1,\dots,v_n)\quad\text{implies}\quad t^A(b,u_1,\dots,u_n) \, \delta \, t^A(b,v_1,\dots,v_n). \tag{TC}\]
The implication (TC) is referred to as the \emph{term condition} for $t$, or shortly $\TC(t,\alpha,\beta,\delta)$. 
It is easy to show that $C(\alpha,\beta;\delta)$ holds if and only if $TC(t,\alpha,\beta,\delta)$ is satisfied for every term $t$ in which the first variable occurs only once: indeed, we can use (TC) several times to replace every occurrence one-by-one (see \cite[Lemma 4.1]{SV1} for a formal proof). 
We will also need the following observations:
\begin{itemize}
	\item[(C1)] if $C(\alpha,\beta;\delta_i)$ for every $i\in I$, then $C(\alpha,\beta;\bigwedge\delta_i)$,
	\item[(C2)] $C(\alpha,\beta;\alpha\wedge\beta)$,
	\item[(C3)] if $\theta\leq\alpha\wedge\beta\wedge\delta$, then $C(\alpha,\beta;\delta)$ in $A$ if and only if $C(\alpha/\theta,\beta/\theta;\delta/\theta)$ in $A/\theta$.
\end{itemize}

Now, the \emph{commutator} of $\alpha$, $\beta$, denoted by $[\alpha,\beta]$, is the smallest congruence $\delta$ such that $C(\alpha,\beta;\delta)$ (the definition makes sense thanks to (C1)). From (C2) follows that $[\alpha,\beta]\leq\alpha\wedge\beta$.
Finally, a congruence $\alpha$ is called 
\begin{itemize}
	\item \emph{abelian} if $C(\alpha,\alpha;0_A)$, i.e., if $[\alpha,\alpha]=0_A$,
	\item \emph{central} if $C(\alpha,1_A;0_A)$, i.e., if $[\alpha,1_A]=0_A$.
\end{itemize}
Subsequently, the \emph{center} of $A$, denoted by $\zeta_A$, is the largest congruence of $A$ such that $C(\zeta_A,1_A;0_A)$. Hence, $[\alpha,1_A]$ is the smallest congruence $\delta$ such that $\alpha/\delta\leq\zeta_{A/\delta}$.
Similarly, $[\alpha,\alpha]$ is the smallest congruence $\delta$ such that $\alpha/\delta$ is an abelian congruence of $A/\delta$. 

The following lemma resembles the second isomorphism theorem for groups, and will be used later in induction arguments.

\begin{lemma}\label{l:2rd_iso_thm}
Let $A$ be an algebraic structure, and $\theta\leq\alpha\leq\beta$ its congruences. 
Then $\beta/\alpha$ is central (resp. abelian) in $A/\alpha$ if and only if $(\beta/\theta)\big/(\alpha/\theta)$ is central (resp. abelian) in $(A/\theta)\big/(\alpha/\theta)$.
\end{lemma}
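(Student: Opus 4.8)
The statement is essentially the "second isomorphism theorem" for the (abelian/central) commutator, relating congruences of $A/\alpha$ to congruences of $(A/\theta)/(\alpha/\theta)$. The natural strategy is to observe that these two quotient algebras are, in fact, canonically isomorphic, and that the isomorphism carries the relevant congruences to one another; then invoke the fact that centrality and abelianness of a congruence are preserved by isomorphisms of algebras (they are defined purely in terms of term operations and congruences). So the proof is a bookkeeping exercise once the right isomorphism is identified, and property (C3) from the excerpt is the technical tool that makes it clean.

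\begin{proof}
By the correspondence theorem for congruences (as recalled in Section~\ref{s:2}), the congruences of $A/\theta$ lying above $\alpha/\theta$ are exactly those of the form $\gamma/\theta$ with $\theta\leq\gamma\in\Con(A)$ and $\gamma\geq\alpha$, and the canonical map
\[ A/\alpha \longrightarrow (A/\theta)\big/(\alpha/\theta),\qquad [a]_\alpha\longmapsto \big[[a]_\theta\big]_{\alpha/\theta} \]
is an isomorphism of algebraic structures, under which $\beta/\alpha$ corresponds to $(\beta/\theta)\big/(\alpha/\theta)$ and $0_{A/\alpha}$ corresponds to $0_{(A/\theta)/(\alpha/\theta)}$. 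Since the centralizing relation $C(-,-;-)$, and hence the properties of being central and abelian, are defined entirely in terms of term operations and congruences, they are invariant under algebra isomorphisms; applying this invariance to the displayed isomorphism yields the claim.

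Alternatively, one can argue directly with property (C3). Apply (C3) twice, both times with the congruence $\theta$ in the role of the ``$\theta$'' of (C3). First, working in $A$ with congruences $\beta,1_A,\alpha$ and noting $\theta\leq\alpha\leq\beta\wedge 1_A$, (C3) gives that $C(\beta,1_A;\alpha)$ holds in $A$ if and only if $C(\beta/\theta,1_{A/\theta};\alpha/\theta)$ holds in $A/\theta$. Now translate both sides into quotient language using (C3) once more, this time collapsing by $\alpha$ on the left and by $\alpha/\theta$ on the right: $C(\beta,1_A;\alpha)$ in $A$ says exactly that $\beta/\alpha$ is central in $A/\alpha$, and $C(\beta/\theta,1_{A/\theta};\alpha/\theta)$ in $A/\theta$ says exactly that $(\beta/\theta)\big/(\alpha/\theta)$ is central in $(A/\theta)\big/(\alpha/\theta)$. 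This proves the equivalence for centrality; the abelian case is identical, replacing $1_A$ by $\beta$ (and $1_{A/\theta}$ by $\beta/\theta$) throughout.
\end{proof}

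The only point requiring a little care is making sure the two applications of (C3) are legitimate, i.e.\ that the hypothesis $\theta\leq\alpha\wedge\beta\wedge\delta$ of (C3) is met in each case; this follows immediately from the assumption $\theta\leq\alpha\leq\beta$, since then $\theta\leq\alpha\leq\alpha$ and $\theta\leq\alpha\leq\beta$ while $1_A$ (resp.\ $\beta$) trivially dominates $\theta$. I expect the main conceptual obstacle to be purely notational: keeping the double-quotient brackets straight and being explicit that ``central/abelian in a quotient $A/\alpha$'' unwinds to a centralizing relation in $A$ of the specific shape $C(\beta,1_A;\alpha)$ (resp.\ $C(\beta,\beta;\alpha)$), which is precisely what makes (C3) directly applicable.
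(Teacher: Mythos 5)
Your proposal is correct, and your second argument (the chain of applications of (C3) passing from $A/\alpha$ to $A$, then to $A/\theta$, then to $(A/\theta)\big/(\alpha/\theta)$) is exactly the paper's proof, which consists of precisely that three-step chain of equivalences with the abelian case handled by replacing $1_A$ by $\beta$. Your first argument via the canonical isomorphism $A/\alpha\simeq (A/\theta)\big/(\alpha/\theta)$ and invariance of the centralizing relation under isomorphism is an equally valid alternative, though the paper does not take that route.
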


\begin{proof}
In case of centrality, using (C3) repetitively, we obtain
\begin{align*}
C(\beta/\alpha,1_{A/\alpha};0_{A/\alpha})\text{ in } A/\alpha 
\ &\Leftrightarrow\ 
C(\beta,1_A;\alpha)\text{ in } A  \\
&\Leftrightarrow\ 
C(\beta/\theta,1_{A/\theta};\alpha/\theta)\text{ in } A/\theta \\
&\Leftrightarrow\ 
C((\beta/\theta)\big/(\alpha/\theta),1_{(A/\theta)\big/(\alpha/\theta)};0_{(A/\theta)\big/(\alpha/\theta)}))\text{ in } (A/\theta)\big/(\alpha/\theta).
\end{align*}
A similar argument works for abelianness, too.
\end{proof}

An algebraic structure $A$ is called \emph{abelian} if $\zeta_A=1_A$, or, equivalently, if the congruence $1_A$ is abelian.
It is called \emph{nilpotent} (resp. \emph{solvable}) if and only if there is a chain of congruences 
\begin{displaymath}
    0_A=\alpha_0\leq\alpha_1\leq\ldots\leq\alpha_n=1_A
\end{displaymath}
such that $\alpha_{i+1}/\alpha_{i}$ is a central (resp. abelian) congruence of $A/\alpha_{i}$, for all $i\in\{0,1,\dots,n-1\}$.
The length of the smallest such series is called the \emph{length} of nilpotence (resp. solvability).

Similarly to group theory, one can define the series
\begin{displaymath}
    \gamma_{0}=1_A,\qquad \gamma_{i+1}=[\gamma_{i},1_A],
\end{displaymath}
and
\begin{displaymath}
    \gamma^{0}=1_A,\qquad \gamma^{i+1}=[\gamma^{i},\gamma^{i}],
\end{displaymath}
and prove that an algebraic structure $A$ is nilpotent (resp. solvable) of length $n$ if and only if $\gamma_{n}=0_A$ (resp. $\gamma^n=0_A$). Note that both definitions use a special type of commutators: nilpotence uses commutators $[\alpha,1_A]$, solvability uses commutators $[\alpha,\alpha]$. 

In groups, the commutator and the corresponding notions of abelianness and centrality coincide with the classical terminology. 
In loops, the situation is more complicated \cite{SV1}. In a wider setting, the commutator behaves well in all congruence-modular varieties \cite{FM}; for example, it is commutative (note that its definition is asymmetric with respect to $\alpha,\beta$). For racks, the commutator in general lacks many desired properties, such as commutativity (Proposition \ref{p:comm_comm} and Example \ref{ex:non-commutative commutator}), nevertheless, the notions of abelianness and centrality seem to have a very good meaning.

We also point out that there is no general principle providing a natural generating set for the congruence commutator, such as the element-wise commutators in groups. Analogies are known in several special cases, including loops \cite{SV1} and quasigroups \cite{Bel}.

\section{The commutator in racks}\label{s:5}

\subsection{From universal algebra to racks}\label{s:commutators}

The crucial fact is that in racks, or more generally, in ltt left quasigroups, the centralizing relation for congruences is related to the properties of the corresponding relative displacement groups.

\begin{lemma}\label{l:C(ab0)}
Let $Q$ be a left quasigroup, $\alpha,\beta$ its congruences, and consider the following conditions:
\begin{enumerate}
	\item $C(\alpha,\beta;0_Q)$;
	\item $[\dis_{\alpha},\dis_{\beta}]=1$ and $\dis_\beta$ acts $\alpha$-semiregularly on $Q$.
\end{enumerate}
Then (1) implies (2), and if $Q$ is an ltt left quasigroup then (2) implies (1).
\end{lemma}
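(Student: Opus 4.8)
The statement is an equivalence (under the ltt hypothesis) between a term-condition centralization and a pair of group-theoretic conditions on the relative displacement groups, so I would prove the two implications separately and treat them quite differently.

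\emph{From (1) to (2).} Assume $C(\alpha,\beta;0_Q)$. The key is to feed carefully chosen lt-terms into the term condition. For the semiregularity of $\dis_\beta$, suppose $h\in\dis_\beta$ fixes some point $a$, say $a\,\alpha\,a'$; by Lemma \ref{l:words in Dis} we may write $h=L_{c_n}^{k_n}\cdots L_{c_1}^{k_1}L_{d_1}^{-k_1}\cdots L_{d_n}^{-k_n}$ with $c_i\,\beta\,d_i$. This $h$ is the value of a single lt-term $t(x_0,u_1,v_1,\dots)=L_{u_n}^{k_n}\cdots L_{u_1}^{k_1}L_{v_1}^{-k_1}\cdots L_{v_n}^{-k_n}(x_0)$ in which the first variable $x_0$ occurs once; plugging $x_0=a$ versus $x_0=a'$, with the $u_i\mapsto c_i$, $v_i\mapsto d_i$ (so $u_i\,\beta\,v_i$) against $u_i\mapsto v_i\mapsto c_i$ (so that $t$ evaluates to the identity on the "$v$-side"), the hypothesis $t(a,\dots)=h(a)=a=t(a,\text{id-side})$ yields $t(a',\dots)=t(a',\text{id-side})$, i.e.\ $h(a')=a'$. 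This is exactly $\alpha$-semiregularity. For $[\dis_\alpha,\dis_\beta]=1$: take $g\in\dis_\alpha$ and $h\in\dis_\beta$, write $g$ as an $\alpha$-word and consider the lt-term expressing $g$ applied to a point, where now the \emph{first} variable is one of the generators $L_a L_b^{-1}$ with $a\,\alpha\,b$; comparing the $\beta$-perturbation given by $h$ (via its $\beta$-word) on both sides of the term condition, and using faithfulness of the action is not available in general, so instead one compares the action of $g h$ and $h g$ on an arbitrary point. The term condition forces $gh(x)=hg(x)$ for all $x$, hence $[g,h]=1$. This is the standard commutator-theory computation; I would organize it exactly as in \cite[proof of Lemma 4.2 or similar]{SV1}.

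\emph{From (2) to (1), using ltt.} This is where the ltt hypothesis is essential and where I expect the main work. By the reduction recalled after (TC), it suffices to verify $\TC(t,\alpha,\beta,0_Q)$ for lt-terms $t$ in which the first variable occurs once. Write such a $t$ in the form $t(x_1,\dots,x_n)=L_{s_1(x_{i_1})}^{\eps_1}\cdots L_{s_{m-1}(x_{i_{m-1}})}^{\eps_{m-1}}(s_m(x_{i_m}))$. The single occurrence of the distinguished variable $x$ sits either inside one of the left-translation slots (inside some $s_j$) or inside the innermost argument $s_m$. Now compare the two evaluations: changing $x$ from $a$ to $b$ with $a\,\alpha\,b$ changes each affected $L_{s_j(\cdot)}$ by a left translation whose "difference" $L_{s_j(\dots a\dots)}L_{s_j(\dots b\dots)}^{-1}$ lies in $\dis_\alpha$ (using that $s_j(\dots a \dots)\,\alpha\,s_j(\dots b\dots)$ and, for racks/ltt, the description of $\dis_\alpha$), and changes the inner argument $s_m$ within an $\alpha$-block. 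Meanwhile, the $\beta$-perturbation $u_k\mapsto v_k$ moves the whole evaluation by an element of $\dis_\beta$ acting on a $\beta$-perturbed point. The hypothesis $t(a,u)=t(a,v)$ says a certain $\dis_\beta$-element fixes a point; $\alpha$-semiregularity upgrades this to fixing the $\alpha$-related point that appears in $t(b,\cdot)$; and $[\dis_\alpha,\dis_\beta]=1$ lets us commute the $\dis_\alpha$-valued "$a\to b$ correction" past the $\dis_\beta$-valued "$u\to v$ correction", so the equality $t(a,u)=t(a,v)$ transports to $t(b,u)=t(b,v)$. The bookkeeping of how the two corrections interleave inside the lt-word is the delicate part; I would isolate it as a sublemma computing $t^Q(b,\bar u)$ in terms of $t^Q(a,\bar u)$ modulo an explicit element of $\dis_\alpha$, and similarly in the $\beta$-coordinate modulo $\dis_\beta$, then combine.

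\textbf{Main obstacle.} The hard part is the $(2)\Rightarrow(1)$ direction: correctly tracking, for an arbitrary lt-term with one distinguished occurrence, exactly which conjugates of which $L_aL_b^{-1}$'s appear when one switches $a\leftrightarrow b$ versus $u_k\leftrightarrow v_k$, and verifying that commutativity of $\dis_\alpha$ with $\dis_\beta$ together with $\alpha$-semiregularity is precisely enough to slide one past the other and land at the right point. The ltt normal form is what makes this tracking finite and uniform; without it the distinguished variable could occur in the left argument of a deeply nested product in a way not captured by left-translation words. I expect the cleanest writeup to first handle the case where the distinguished variable is the innermost argument, then the case where it occurs inside a single translation slot, and finally note the general single-occurrence case follows since an lt-term has the distinguished variable in exactly one such position.
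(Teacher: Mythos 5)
Your proposal is correct and follows essentially the same route as the paper: for $(1)\Rightarrow(2)$ it feeds the word from Lemma \ref{l:words in Dis} into the term condition for semiregularity and uses a conjugated-commutator term for $[\dis_\alpha,\dis_\beta]=1$, and for $(2)\Rightarrow(1)$ it splits lt-terms into the case where the distinguished variable sits in a translation slot (handled by commuting the $\dis_\alpha$-correction past the $\dis_\beta$-correction) and the case where it is the innermost argument (handled by $\alpha$-semiregularity). The only underspecified spot is the commutativity half of $(1)\Rightarrow(2)$: the precise device, as in the paper, is a term of the form $ZL_{x_2}L_{x_3}^{-1}Z^{-1}YL_{x_0}L_{x_1}^{-1}Y^{-1}ZL_{x_3}L_{x_2}^{-1}Z^{-1}(x_4)$ that collapses to the identity when $x_0=x_1$, so that the hypothesis of the term condition holds trivially and the conclusion unravels to the desired commutation relation.
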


\begin{proof}
$(1)\Rightarrow(2)$.
For the first property, it is sufficient to show that generators of the groups $\dis_{\alpha}$ and $\dis_{\beta}$ commute. Let $a\,\alpha\,b$ and $c\,\beta\,d$. Let $f=L_{u_1}^{k_1}\cdots L_{u_m}^{k_m}$ and $g=L_{v_1}^{l_1}\cdots L_{v_n}^{l_n}$ be elements of $\lmlt(Q)$. We want to show that 
\[ (fL_{a}L_{b}^{-1}f^{-1})(gL_{c}L_{d}^{-1}g^{-1})=(gL_{c}L_{d}^{-1}g^{-1})(fL_{a}L_{b}^{-1}f^{-1}).\] 
Denote by $Y$ the formal expression $L_{y_1}^{k_1}\cdots L_{y_m}^{k_m}$, and $Z$ the formal expression $L_{z_1}^{l_1}\cdots L_{z_n}^{l_n}$, with inverses defined accordingly, and consider the $(m+n+5)$-ary term
\[t(x_0,x_1,x_2,x_3,x_4,\bar y,\bar z)=ZL_{x_2}L_{x_3}^{-1}Z^{-1}YL_{x_0}L_{x_1}^{-1}Y^{-1}ZL_{x_3}L_{x_2}^{-1}Z^{-1}(x_4).\]
Then, for any $e\in Q$, we have
\begin{align*}
t^Q(b,b,c,d,e,\bar u,\bar v)&=gL_{c}L_{d}^{-1}g^{-1}fL_{b}L_{b}^{-1}f^{-1}gL_{d}L_{c}^{-1}g^{-1}(e) \\ &=e \\
&=gL_{c}L_{c}^{-1}g^{-1}fL_{b}L_{b}^{-1}f^{-1}gL_{c}L_{c}^{-1}g^{-1}(e)=t^Q(b,b,c,c,e,\bar u,\bar v). 
\end{align*}
Using $TC(t,\alpha,\beta,0_Q)$, we replace $a$ for $b$ and obtain
\begin{align*}
t^Q(a,b,c,d,e,\bar u,\bar v)&=gL_{c}L_{d}^{-1}g^{-1}fL_{a}L_{b}^{-1}f^{-1}gL_{d}L_{c}^{-1}g^{-1}(e) \\
&=gL_{c}L_{c}^{-1}g^{-1}fL_{a}L_{b}^{-1}f^{-1}gL_{c}L_{c}^{-1}g^{-1}(e)=t^Q(a,b,c,c,e,\bar u,\bar v). 
\end{align*}
By all possible choices of $e$ we obtain
\[ gL_{c}L_{d}^{-1}g^{-1}fL_{a}L_{b}^{-1}f^{-1}gL_{d}L_{c}^{-1}g^{-1}=fL_{a}L_{b}^{-1}f^{-1},\] 
which was our goal.

Next, we show the semiregularity property. Lemma \ref{l:words in Dis} says that 
$\dis_\beta$ consists of all mappings $L_{a_n}^{k_n}\ldots L_{a_1}^{k_1}L_{b_1}^{-k_1}\ldots L_{b_n}^{-k_n}$ such that
$k_i\in\{\pm1\}$ and $a_i\,\beta\,b_i$ for all $i=1,\dots,n$. Given a mapping $g\in\dis_\beta$ in this form, consider the $(2n+1)$-ary term
\[t(x_0,x_1,\dots,x_{2n})=L_{x_n}^{k_n}\ldots L_{x_1}^{k_1}L_{x_{n+1}}^{-k_1}\ldots L_{x_{2n}}^{-k_n}(x_0).\]
Now, assume that for some $a\in Q$
\[t^Q(a,a_1,\dots,a_n,b_1,\dots,b_n)=g(a)=a=t^Q(a,a_1,\dots,a_n,a_1,\dots,a_n).\]
Using $TC(t,\alpha,\beta,0_Q)$, we can replace $a$ for an arbitrary $b$ such that $b\,\alpha\,a$ and obtain
\[t^Q(b,a_1,\dots,a_n,b_1,\dots,b_n)=g(b)=b=t^Q(b,a_1,\dots,a_n,a_1,\dots,a_n).\]

$(2)\Rightarrow(1)$.
It is sufficient to verify $TC(t,\alpha,\beta,0_Q)$ for every term $t(x_0,\dots,x_n)$ as 

$$t(x_1,\ldots,x_n)=L_{s_1(y_1)}^{k_1}\ldots L_{s_m(y_m)}^{k_m}(s_{m+1}(y_{m+1}))$$
where $s_i$ is a unary term and $y_i\in \{x_1,\ldots,x_n\}$ for every $1\leq i\leq m+1$.

Consider $a\,\alpha\,b$ and $u_i\,\beta\,v_i$ for $i=1,\dots,n$ and assume that $t^Q(a,u_1,\dots,u_n)=t^Q(a,v_1,\dots,v_n)$. The goal is to show that $t^Q(b,u_1,\dots,u_n)=t^Q(b,v_1,\dots,v_n)$. There are two cases.

\emph{Case 1:} the variable $x_0$ appears as an argument of a translation, say the $r$-th one, i.e., 
\[t(x_0,\dots,x_n)=L_{s_1(y_{1})}^{\epsilon_1}\cdots L_{s_r(x_0)}^{\epsilon_r}\cdots L_{s_m(y_{m})}^{\epsilon_m}(s_{m+1}(y_{m+1}))\]
where $y_j\in  \{x_1,\ldots,x_n\}$. We denote
\begin{align*}
g&=L_{s_1(u_{1})}^{\epsilon_1}\cdots L_{s_r(x_0)}^{\epsilon_r}\cdots L_{s_m(u_{m})}^{\epsilon_m}L_{s_m(v_{m})}^{-\epsilon_m}\cdots L_{s_r(x_0)}^{-\epsilon_r}\cdots L_{s_1(v_{1})}^{-\epsilon_1} \\ 
h&=L_{s_1(u_{1})}^{\epsilon_1}\cdots L_{s_r(y_0)}^{\epsilon_r}\cdots L_{s_m(u_{m})}^{\epsilon_m}L_{s_m(v_{m})}^{-\eps_m}\cdots L_{s_r(y_0)}^{-\eps_r}\cdots L_{s_1(v_{1})}^{-\eps_1}. 
\end{align*}
The assumption is equivalent to $g(s_{m+1}(u_{m+1}))=s_{m+1}(v_{m+1})$, the goal is equivalent to $h((s_{m+1}(u_{m+1}))=s_{m+1}(v_{m+1})$. We prove that $g=h$:
\begin{align*}
g &= L_{s_1(u_{1})}^{\eps_1}\cdots L_{s_r(x_0)}^{\eps_r}\cdots L_{s_m(u_{m})}^{\eps_m}L_{s_m(v_{m})}^{-\eps_m}\cdots L_{s_r(x_0)}^{-\eps_r}\cdots L_{s_1(v_{1})}^{-\eps_1} \\
&= L_{s_1(u_{1})}^{\eps_1}\cdots L_{s_r(x_0)}^{\eps_r}\underbrace{\cdots L_{s_m(u_{m})}^{\eps_m}L_{s_m(v_{m})}^{-\eps_m}\cdots}_{\in\dis_\beta}\underbrace{ L_{s_r(x_0)}^{-\eps_r}L_{s_r(y_0)}^{\eps_r}}_{\in\dis_\alpha}L_{s_r(y_0)}^{-\eps_r}\cdots L_{s_1(v_{1})}^{-\eps_1} \\
&= L_{s_1(u_{1})}^{\eps_1}\cdots L_{s_r(x_0)}^{\eps_r}\underbrace{L_{s_r(x_0)}^{-\eps_r}L_{s_r(y_0)}^{\eps_r}}_{\in\dis_\alpha}\underbrace{\cdots L_{s_m(u_{m})}^{\eps_m}L_{s_m(v_{m})}^{-\eps_m}\cdots}_{\in\dis_\beta}L_{s_r(y_0)}^{-\eps_r}\cdots L_{s_1(v_{1})}^{-\eps_1} = h,
\end{align*}
using $[\dis_\alpha,\dis_\beta]=1$ to commute the two underbraced mappings. In this way we can substitute all the occurrences of $x_0$ as argument of a left translation of $g$. Therefore we have $g=h$ and consequently $h(s_{m+1}(u_{m+1}))=g(s_{m+1}(u_{m+1}))=s_{m+1}(v_{m+1})$.

\emph{Case 2:} the variable $x_0$ is the rightmost variable of $t$, i.e., 
\[t(x_0,\dots,x_n)=L_{s_1(y_{1})}^{\epsilon_1}\cdots L_{s_m(y_{m})}^{\epsilon_m}(s_{m+1}(x_0))\]
where $y_{i}\in \{x_1,\ldots x_n\}$ and $s_i$ is a unary term for every $1\leq i\leq m$. We can assume that $x_0$ appear just once, otherwise we can substitute it with $y_0$ by using the argument in case 1. Let us denote again 
\[g=L_{s_1(u_{1)}}^{\epsilon_1}\cdots L_{s_m(u_{m})}^{\epsilon_m}L_{s_m(v_{m})}^{-\epsilon_m}\cdots L_{s_1(v_{1})}^{-\epsilon_1}. \] 
The assumption is equivalent to $g(s_{m+1}(a))=s_{m+1}(a)$, the goal is equivalent to $g(s_{m+1}(b))=s_{m+1}(b)$. Since $u_i\, \alpha\, v_i$ for every $i$, then according to \ref{l:words in Dis} we have that $g\in\dis_\beta$, and $s_{m+1}(a)\, \alpha\, s_{m+1}(b)$ whenever $a\, \alpha\, b$. So we can apply $\alpha$-semiregularity and the case is finished.
\end{proof}

Now, it becomes easy to characterize the commutator in ltt left quasigroups by properties of the corresponding subgroups.

\begin{proposition}\label{p:commutators}
Let $Q$ be an ltt left quasigroup and let $\alpha,\beta$ be its congruences. Then $[\alpha,\beta]$ is the smallest congruence $\delta$ such that $[\dis_{\alpha/\delta},\dis_{\beta/\delta}]=1$ and $\dis_{\beta/\delta}$ acts $\alpha/\delta$-semiregularly on $Q/\delta$.
\end{proposition}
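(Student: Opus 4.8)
The plan is to deduce Proposition~\ref{p:commutators} directly from Lemma~\ref{l:C(ab0)} by passing to quotients, using the standard ``quotient reduction'' property (C3) of the centralizing relation together with Proposition~\ref{p:dis_alpha1}(1), which tells us how the operators $\dis_\alpha$ and $\dis^\alpha$ behave under the canonical projection $\pi_\delta$.

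\textbf{Key steps.} First I would recall that $[\alpha,\beta]$ is by definition the smallest congruence $\delta$ with $C(\alpha,\beta;\delta)$, and that the set of such $\delta$ is a filter (upward closed, closed under meets by (C1)), so it suffices to identify for each $\delta\geq 0_Q$ the condition equivalent to $C(\alpha,\beta;\delta)$. Fix a congruence $\delta$ of $Q$. Since we may assume $\delta\leq\alpha\wedge\beta$ (because $C(\alpha,\beta;\delta)$ forces $[\alpha,\beta]\leq\delta$ hence $\delta\geq[\alpha,\beta]$, and conversely anything below $\alpha\wedge\beta$ we can intersect; in fact one only needs $\delta$ to range over congruences and observe that $C(\alpha,\beta;\delta)$ with $\delta\not\leq\alpha\wedge\beta$ is implied by the $\delta\wedge\alpha\wedge\beta$ case via (C1) and (C2)), apply (C3) with $\theta=\delta$: $C(\alpha,\beta;\delta)$ in $Q$ holds if and only if $C(\alpha/\delta,\beta/\delta;0_{Q/\delta})$ holds in $Q/\delta$. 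Next, since $Q$ is ltt, so is its quotient $Q/\delta$ (the ltt property is preserved under homomorphic images: every term function on $Q/\delta$ is the image of a term function on $Q$, which equals an lt-term function, and applying $\pi_\delta$ keeps the lt-form). Hence Lemma~\ref{l:C(ab0)} applies to $Q/\delta$ with congruences $\alpha/\delta$ and $\beta/\delta$, giving that $C(\alpha/\delta,\beta/\delta;0_{Q/\delta})$ is equivalent to $[\dis_{\alpha/\delta},\dis_{\beta/\delta}]=1$ together with $\dis_{\beta/\delta}$ acting $(\alpha/\delta)$-semiregularly on $Q/\delta$. Combining these two equivalences yields exactly the stated characterization, and taking the smallest such $\delta$ gives $[\alpha,\beta]$.

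\textbf{Remaining detail.} One should make sure the notation $\dis_{\alpha/\delta}$ appearing in the statement refers to the relative displacement group computed inside $\lmlt(Q/\delta)$, which is consistent with Proposition~\ref{p:dis_alpha1}(1): $\dis_{\alpha/\delta}=\pi_\delta(\dis_\alpha)$. This is only needed to reconcile notation, not for the logic of the proof. I would also note explicitly that the reduction to $\delta\leq\alpha\wedge\beta$ is harmless: if $\delta$ is arbitrary, then $C(\alpha,\beta;\delta)$ is equivalent to $C(\alpha,\beta;\delta\wedge\alpha\wedge\beta)$ — the forward direction is (C1) applied with (C2), and the backward direction is monotonicity of the centralizing relation in the third argument — so the family of $\delta$ satisfying the group-theoretic conditions and the family satisfying $C(\alpha,\beta;\delta)$ have the same least element.

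\textbf{Main obstacle.} The only genuinely nontrivial point is verifying that $Q/\delta$ inherits the ltt property so that Lemma~\ref{l:C(ab0)}, part $(2)\Rightarrow(1)$, is available for the quotient; this is where the hypothesis ``$Q$ is ltt'' (rather than just ``$Q$ is a rack'') does its work, since for racks it would be automatic. Everything else is bookkeeping with the properties (C1)--(C3) and Proposition~\ref{p:dis_alpha1}(1).
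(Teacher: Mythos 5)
Your proposal is correct and follows essentially the same route as the paper's own proof: reduce to $\delta\leq\alpha\wedge\beta$, apply (C3) with $\theta=\delta$ to pass to $C(\alpha/\delta,\beta/\delta;0_{Q/\delta})$ in $Q/\delta$, and then invoke Lemma \ref{l:C(ab0)} on the quotient. You additionally spell out two details the paper leaves implicit --- that quotients of ltt left quasigroups are again ltt (via compatibility of term functions with $\pi_\delta$) and the reconciliation of $\dis_{\alpha/\delta}$ with $\pi_\delta(\dis_\alpha)$ --- both of which are correct and worth stating.
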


Equivalently, we could state the former condition as $[\dis_\alpha,\dis_\beta]\leq \dis^\delta$.

\begin{proof}
Since $[\alpha,\beta]\leq\alpha\wedge\beta$, we can assume that $\delta\leq\alpha\wedge\beta$. 
Using observation (C3) for $\theta=\delta$, we obtain that $C(\alpha,\beta;\delta)$ if and only if
$[\dis_{\alpha/\delta},\dis_{\beta/\delta}]=1$ and $\dis_{\beta/\delta}$ acts $\alpha/\delta$-semiregularly on $Q/\delta$.
\end{proof}

While our characterization is not convenient to calculate the actual value of the commutator in general, it is useful in the derived concepts of abelianness and centrality, since they require the commutator to be zero. Using Proposition \ref{p:commutators} with $\beta=\alpha$ (resp. $\beta=1_Q$) and $\delta=0$, we immediately obtain one of our main results, Theorem \ref{t:abelian,central}.

\subsection{Commutator in faithful quandles}\label{s:commutators_faithful}

In general, the semiregularity condition in Proposition \ref{p:commutators} is necessary (even in the special case $\alpha=\beta=1_Q$ defining abelianness, see \cite{JPSZ2}). However, for faithful quandles, semiregularity follows from the commutativity condition, and the characterization of $C(\alpha,\beta;\delta)$ simplifies.

\begin{lemma}\label{l:faithful_semiregularity}
Let $Q$ be a faithful quandle, $\alpha$ its congruence and $N\leq\dis(Q)$. If $[N,\dis_\alpha]=1$, then $N$ acts $\alpha$-semiregularly on $Q$.
\end{lemma}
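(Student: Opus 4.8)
The plan is to exploit the group-theoretic structure around the conjugation action together with faithfulness. Suppose $g\in N$ and $g(a)=a$ for some $a\in Q$; I want to show $g(b)=b$ for every $b\,\alpha\,a$. The key observation is that, since $Q$ is a quandle, the blocks of $\alpha$ are connected under the action of $\dis_\alpha$ on $[a]_\alpha$ is not automatic, but what \emph{is} available is that any $b\,\alpha\,a$ can be reached from $a$ by applying suitable translations; more to the point, faithfulness says $L_b=L_c$ implies $b=c$, so it suffices to show $L_{g(b)}=L_b$, i.e. that $g$ fixes $L_b$ under conjugation in $\lmlt(Q)$.

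\textbf{Main steps.} First I would record the identity $L_{g(b)}=gL_bg^{-1}$ coming from \eqref{L_f(a)}, valid since $g\in\dis(Q)\le\aut Q$. So the goal $g(b)=b$ is equivalent (by faithfulness) to $gL_bg^{-1}=L_b$, i.e. $g$ commutes with $L_b$. Next, since $g(a)=a$, the same identity gives $gL_ag^{-1}=L_a$, so $g$ commutes with $L_a$. Now for any $b\,\alpha\,a$ we have $L_aL_b^{-1}\in\dis_\alpha$, and the hypothesis $[N,\dis_\alpha]=1$ gives that $g$ commutes with $L_aL_b^{-1}$. Combining: $g$ commutes with $L_a$ and with $L_aL_b^{-1}$, hence $g$ commutes with $L_a^{-1}(L_aL_b^{-1})=L_b^{-1}$, hence with $L_b$. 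Therefore $L_{g(b)}=gL_bg^{-1}=L_b$, and faithfulness yields $g(b)=b$, as desired.

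\textbf{Where the subtlety lies.} The only real content is the passage from ``$g$ commutes with $L_b$'' to ``$g(b)=b$'', which is exactly faithfulness (a rack with trivial Cayley kernel), together with the fact—already noted in the excerpt—that every faithful rack is a quandle and that $\dis(Q)\le\aut Q$ so that \eqref{L_f(a)} applies to $g$. The commutation bookkeeping is routine: the hypothesis $[N,\dis_\alpha]=1$ is used once, to get $[g,L_aL_b^{-1}]=1$, and the fixed-point hypothesis $g(a)=a$ is used once, to get $[g,L_a]=1$; multiplying these two commuting relations gives $[g,L_b]=1$. I do not anticipate a genuine obstacle here; the statement is essentially an unwinding of definitions, and the proof is a couple of lines. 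If anything, the point worth stating carefully is that $g\in N\subseteq\dis(Q)$ consists of automorphisms, so that the conjugation formula $L_{g(x)}=gL_xg^{-1}$ is legitimately available for $x=a$ and $x=b$.
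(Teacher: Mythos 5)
Your proof is correct and is essentially the paper's own argument: both reduce $g(b)=b$ to $[g,L_b]=1$ via faithfulness and the conjugation formula $L_{g(x)}=gL_xg^{-1}$, obtaining $[g,L_b]=1$ by combining $[g,L_a]=1$ (from $g(a)=a$) with $[g,L_aL_b^{-1}]=1$ (from $[N,\dis_\alpha]=1$). The only difference is presentational --- the paper writes the identical computation as a single chain of equalities $L_b=\cdots=L_{g(b)}$ --- and the stray aside in your first paragraph about reaching $b$ from $a$ by translations is neither true in general nor used in your actual argument.
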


\begin{proof}
Consider $h\in N$ and $a\in Q$ such that $h(a)=a$. Take any $b\,\alpha\, a$. Then
\begin{displaymath}
L_{b}=L_{b} L_{a}^{-1} L_{a}=L_{b} L_{a}^{-1} L_{h(a)}=\underbrace{ L_{b} L_{a}^{-1}}_{\in \dis_\alpha} \underbrace{h}_{\in N} L_{a} h^{-1}=\underbrace{h}_{\in N}\underbrace{ L_{b} L_{a}^{-1}}_{\in \dis_\alpha}L_{a} h^{-1}=h  L_{b} h^{-1}=L_{h(b)}.
\end{displaymath}
Since $Q$ is faithful, we obtain that $h(b)=b$.
\end{proof}

\begin{corollary}\label{abelianness faithful case}
Let $Q$ be a faithful quandle and $\alpha$ its congruence. Then
\begin{itemize}
\item[(1)] $\alpha$ is abelian if and only if $\dis_\alpha$ is abelian.
\item[(2)] $\alpha$ is central if and only if $\dis_\alpha$ is central in $\dis(Q)$.
\end{itemize}
\end{corollary}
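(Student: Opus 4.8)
The plan is to derive Corollary~\ref{abelianness faithful case} by combining Theorem~\ref{t:abelian,central} (equivalently, Proposition~\ref{p:commutators} with $\delta=0_Q$) with Lemma~\ref{l:faithful_semiregularity}, which lets us eliminate the semiregularity hypotheses in the faithful case.

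For part~(1), recall from Theorem~\ref{t:abelian,central}(1) that $\alpha$ is abelian if and only if $\dis_\alpha$ is abelian \emph{and} $\dis_\alpha$ acts $\alpha$-semiregularly on $Q$. The forward implication is then immediate. For the converse, suppose $\dis_\alpha$ is abelian; then in particular $[\dis_\alpha,\dis_\alpha]=1$, so applying Lemma~\ref{l:faithful_semiregularity} with $N=\dis_\alpha$ gives that $\dis_\alpha$ acts $\alpha$-semiregularly on $Q$. Both conditions of Theorem~\ref{t:abelian,central}(1) hold, so $\alpha$ is abelian.

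For part~(2), recall from Theorem~\ref{t:abelian,central}(2) that $\alpha$ is central if and only if $\dis_\alpha$ is central in $\dis(Q)$ \emph{and} $\dis(Q)$ acts $\alpha$-semiregularly on $Q$. Again the forward implication is trivial. For the converse, assume $\dis_\alpha$ is central in $\dis(Q)$, i.e.\ $[\dis(Q),\dis_\alpha]=1$. Now apply Lemma~\ref{l:faithful_semiregularity} with $N=\dis(Q)$: since $[\dis(Q),\dis_\alpha]=1$, the lemma yields that $\dis(Q)$ acts $\alpha$-semiregularly on $Q$. Both conditions of Theorem~\ref{t:abelian,central}(2) are satisfied, so $\alpha$ is central.

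There is essentially no obstacle here: the corollary is a short deduction, and the real content has been isolated into Lemma~\ref{l:faithful_semiregularity}. If anything, the only point requiring a moment's care is matching the commutator hypothesis to the shape $[N,\dis_\alpha]=1$ demanded by the lemma — in part~(1) this means reading ``$\dis_\alpha$ abelian'' as $[\dis_\alpha,\dis_\alpha]=1$ and taking $N=\dis_\alpha$, and in part~(2) reading ``$\dis_\alpha$ central in $\dis(Q)$'' as $[\dis(Q),\dis_\alpha]=1$ and taking $N=\dis(Q)$. Once the hypotheses are lined up this way the proof is a one-line invocation in each case.
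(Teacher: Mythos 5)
Your proposal is correct and follows exactly the route the paper intends: the paper states Lemma \ref{l:faithful_semiregularity} immediately before the corollary precisely so that the semiregularity hypotheses in Theorem \ref{t:abelian,central} can be discharged by taking $N=\dis_\alpha$ in part (1) and $N=\dis(Q)$ in part (2). Nothing is missing.
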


For general commutator, we must assume that every factor is faithful, so that we can properly use Corollary \ref{abelianness faithful case}. This assumption holds, for example, in every finite latin quandle, or in every quandle that has CDSg.

\begin{proposition}\label{p:comm_comm}
Let $Q$ be a quandle such that every factor of $Q$ is faithful, and let $\alpha,\beta$ be its congruences. Then \[ [\alpha,\beta]=[\beta,\alpha]=\con{[\dis_{\alpha},\dis_\beta]}=\mathcal{O}_{[\dis_\alpha,\dis_\beta]}. \]
\end{proposition}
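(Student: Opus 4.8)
The plan is to reduce everything to Corollary \ref{abelianness faithful case} by working inside suitable quotients, and to use the Galois connection from Proposition \ref{galois_connection} together with the orbit description of $\con{}$ versus $\mathcal{O}$. First I would observe that the two rightmost expressions agree: for any $N\in\N(Q)$ we have $\mathcal{O}_N\leq\con N$ by Proposition \ref{p:con_N}(4), and conversely, if $a\,\con N\,b$ then $L_aL_b^{-1}\in N$, so $b=L_b^{-1}L_a$... more carefully, since $Q$ is a quandle, $a=a*a=L_a(a)$, hence $L_aL_b^{-1}(b)= L_a(b)$; applying $(L_aL_b^{-1})$ repeatedly and using that $Q$ is idempotent one gets $b\in\mathcal{O}_N\cdot a$ provided $L_aL_b^{-1}$ fixes... the cleaner route is: in a faithful quandle every factor is faithful, so by Proposition \ref{p:con_dis} and Proposition \ref{p:CDSg}-type reasoning one has $\con N=\mathcal{O}_N$ whenever... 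Actually the cleanest argument is that $\con{[\dis_\alpha,\dis_\beta]}=\mathcal{O}_{[\dis_\alpha,\dis_\beta]}$ will follow once we know this common relation equals $[\alpha,\beta]$, because $[\alpha,\beta]$ is a congruence and for congruences $\gamma$ with $\dis_\gamma=\dis^\gamma$ (which holds here, as every factor is faithful — see below) one has $\gamma=\con{\dis_\gamma}$ and $\gamma=\mathcal{O}_{\dis_\gamma}$ by Proposition \ref{p:dis_alpha2}(2) combined with faithfulness. So I would prove the main equality $[\alpha,\beta]=\con{[\dis_\alpha,\dis_\beta]}$ and then get the orbit form for free.

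For the main equality, set $\delta=\con{[\dis_\alpha,\dis_\beta]}$. To show $[\alpha,\beta]\leq\delta$: by Proposition \ref{p:commutators} it suffices to check that $[\dis_{\alpha/\delta},\dis_{\beta/\delta}]=1$ and $\dis_{\beta/\delta}$ acts $\alpha/\delta$-semiregularly on $Q/\delta$. Applying $\pi_\delta$ and Proposition \ref{p:dis_alpha1}(1), $[\dis_{\alpha/\delta},\dis_{\beta/\delta}]=\pi_\delta([\dis_\alpha,\dis_\beta])$, which is trivial precisely because $[\dis_\alpha,\dis_\beta]\leq\dis^\delta=\ker\pi_\delta|_{\dis(Q)}$ — and $[\dis_\alpha,\dis_\beta]\leq\dis^{\con{[\dis_\alpha,\dis_\beta]}}$ is exactly Proposition \ref{p:con_N}(1) with $N=[\dis_\alpha,\dis_\beta]$ (which lies in $\N(Q)$ since it is a commutator of two subgroups normal in $\lmlt(Q)$). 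Since $Q/\delta$ is faithful (every factor of $Q$ is) and $[\dis_{\alpha/\delta},\dis_{\beta/\delta}]=1$, Lemma \ref{l:faithful_semiregularity} gives the semiregularity for free. Conversely, to show $\delta\leq[\alpha,\beta]$: abbreviate $\gamma=[\alpha,\beta]$. By Proposition \ref{p:commutators}, $[\dis_{\alpha/\gamma},\dis_{\beta/\gamma}]=1$, i.e. $\pi_\gamma([\dis_\alpha,\dis_\beta])=1$, so $[\dis_\alpha,\dis_\beta]\leq\dis^\gamma$; hence for $a\,\delta\,b$ we have $L_aL_b^{-1}\in[\dis_\alpha,\dis_\beta]\leq\dis^\gamma$, which by Lemma \ref{l:kernel} and the definition of $\con{}$ gives $a\,\con{\dis^\gamma}\,b$, and since $Q/\gamma$ is faithful, $\con{\dis^\gamma}=\gamma$ by Proposition \ref{p:con_dis}. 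Thus $\delta\leq\gamma$. Together with the reverse inclusion, $[\alpha,\beta]=\delta=\con{[\dis_\alpha,\dis_\beta]}$.

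Finally, commutativity $[\alpha,\beta]=[\beta,\alpha]$ drops out immediately, since the group-theoretic commutator is symmetric: $[\dis_\alpha,\dis_\beta]=[\dis_\beta,\dis_\alpha]$, so $\con{[\dis_\alpha,\dis_\beta]}=\con{[\dis_\beta,\dis_\alpha]}$. For the orbit form, since $\gamma=[\alpha,\beta]$ satisfies $\dis_\gamma=\dis^\gamma$ (as $Q/\gamma$ is faithful, Proposition \ref{p:con_dis} gives $\gamma=\con{\dis^\gamma}$, and then $\dis^\gamma=\dis_{\con{\dis^\gamma}}=\dis_\gamma$ using Proposition \ref{p:con_N}(1) and faithfulness of all factors à la Proposition \ref{p:CDSg}), Proposition \ref{p:dis_alpha2}(2,3) forces $\gamma=\mathcal{O}_{\dis^\gamma}=\mathcal{O}_{\dis_\gamma}$; combined with $\dis_\gamma=\dis_{\con N}$ for $N=[\dis_\alpha,\dis_\beta]$ (both equal $\dis^\gamma$, since $\con N=\gamma$), and $\mathcal{O}_N=\mathcal{O}_{\dis^{\mathcal O_N}}$, one identifies $\mathcal{O}_{[\dis_\alpha,\dis_\beta]}$ with $\gamma$ as well. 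I expect the main obstacle to be bookkeeping the several uses of ``every factor is faithful'' to pass freely between $\dis_\gamma$ and $\dis^\gamma$ and between $\con{}$ and $\mathcal{O}$; the group theory itself is trivial, and the congruence-theoretic input is entirely Propositions \ref{p:dis_alpha1}, \ref{p:dis_alpha2}, \ref{p:con_N}, \ref{p:con_dis} and \ref{p:commutators}.
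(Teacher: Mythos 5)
Your proof of the first three equalities is correct and follows essentially the same route as the paper: with every factor faithful, Lemma \ref{l:faithful_semiregularity} makes the semiregularity condition in Proposition \ref{p:commutators} automatic, so $[\alpha,\beta]$ is the smallest congruence $\delta$ with $[\dis_\alpha,\dis_\beta]\leq\dis^\delta$; both inclusions against $\con{[\dis_\alpha,\dis_\beta]}$ then follow from Proposition \ref{p:con_N}(1) and Proposition \ref{p:con_dis} exactly as you argue, and $[\alpha,\beta]=[\beta,\alpha]$ is immediate from the symmetry of the group-theoretic commutator.

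The derivation of the last equality, $[\alpha,\beta]=\mathcal{O}_{[\dis_\alpha,\dis_\beta]}$, has a genuine gap. You claim $\dis_\gamma=\dis^\gamma$ for $\gamma=[\alpha,\beta]$, deducing it from ``Proposition \ref{p:con_N}(1) and faithfulness of all factors \`a la Proposition \ref{p:CDSg}''. But Proposition \ref{p:con_N}(1) only gives the inequality $\dis_{\con{N}}\leq N$, not equality, and the condition $\dis_\delta=\dis^\delta$ is the \emph{other}, independent half of the CDSg characterization in Proposition \ref{p:CDSg}; it is not implied by faithfulness of all factors. The paper's own examples (non-principal latin quandles of order $27$) have every factor faithful yet $\dis_\delta\neq\dis^\delta$ for some congruence $\delta$, so this step fails under the stated hypotheses. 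Moreover, even granting $\dis_\gamma=\dis^\gamma$, your chain would identify $\gamma$ with $\mathcal{O}_{\dis_\gamma}$ and $\mathcal{O}_{\dis^\gamma}$, whereas the claim concerns $\mathcal{O}_N$ for $N=[\dis_\alpha,\dis_\beta]$, which may be a proper subgroup of $\dis^\gamma$. The correct argument is shorter and needs none of this: every $h\in N$ satisfies $h(a)\,\mathcal{O}_N\,a$ for all $a$, so $N\leq\dis^{\mathcal{O}_N}$ by Lemma \ref{l:kernel}, whence $[\alpha,\beta]\leq\mathcal{O}_N$ by the minimality characterization from your first paragraph; the reverse inclusion $\mathcal{O}_N\leq\con{N}=[\alpha,\beta]$ is Proposition \ref{p:con_N}(4).
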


\begin{proof}
Proposition \ref{p:commutators} and Lemma \ref{l:faithful_semiregularity} apply to every triple of congruences, since every factor of $Q$ is faithful. Hence $[\alpha,\beta]$ is the smallest congruence $\delta$ such that $[\dis_\alpha,\dis_\beta]\leq \dis^\delta$ and we see that the commutator is commutative.
 
Let $\omega=\con{[\dis_{\alpha},\dis_\beta]}$. 
Proposition \ref{p:con_N}(1) for $N=[\dis_\alpha,\dis_\beta]$ says that $[\dis_\alpha,\dis_\beta]\leq \dis^\omega$. 
Now consider any congruence $\delta$ satisfying $[\dis_\alpha,\dis_\beta]\leq \dis^\delta$. 
Since the operator $\con{}$ is monotone, we have $\omega\leq\con{\dis^\delta}=\delta$ by Proposition \ref{p:con_dis}.
Hence $[\alpha,\beta]=\omega$.

Let $\gamma=\mathcal{O}_{N}$. Then according to Lemma \ref{l:kernel} then $N\leq \dis^\gamma$ and so $C(\alpha,\beta,\gamma)$ holds. Then $[\alpha,\beta]=\con{N}\leq \gamma$. Hence by Proposition \ref{p:commutators}(4), the equality holds.
\end{proof}

In general, the commutator is not commutative, not even in the special case $[\alpha,1_Q]$. Indeed, the Cayley kernel causes troubles.

\begin{example}\label{ex:non-commutative commutator}
On one hand, $[\alpha,\lambda_Q]=0_Q$ in every quandle and for every $\alpha$, since $\dis_{\lambda_Q}=1$ and it satisfies the conditions of Lemma \ref{l:C(ab0)} trivially. In particular $[1_Q,\lambda_Q]=0_Q$. 
There are examples of non-faithful connected quandles where $[\lambda_Q,1_Q]\neq0_Q$, that is, where $\dis(Q)$ does not act $\lambda_Q$-semiregularly, such as {\tt SmallQuandle}(30,4), (36,58) and (45,29) in the RIG library.
\end{example}

\subsection{The center of a rack}

We will use Theorem \ref{t:abelian,central} to calculate the center of a rack. One natural property to expect is that every central pair \emph{mediates} with all other pairs, i.e., if $a\,\zeta_Q\,b$ then $(u*a)*(b*v)=(u*b)*(a*v)$ for every $u,v$. (A rack is medial if and only if all pairs mutually mediate.

\begin{lemma}\label{l:mediators}
Let $Q$ be a rack. Then \[ \con{Z(\dis(Q))}=\{(a,b):\ (u*a)*(b*v)=(u*b)*(a*v)\text{ for every }u,v\in Q\}.\]
\end{lemma}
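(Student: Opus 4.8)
The plan is to unravel the left-hand side $\con{Z(\dis(Q))}$ directly from the definitions and match it to the "mediation" condition on the right. By definition, $a\,\con{Z(\dis(Q))}\,b$ means $L_aL_b^{-1}\in Z(\dis(Q))$, i.e. $L_aL_b^{-1}$ commutes with every element of $\dis(Q)$. First I would reduce "commutes with every element of $\dis(Q)$" to "commutes with every generator $L_uL_v^{-1}$", since $\dis(Q)$ is generated by such elements and centralizers are subgroups. So the condition becomes: for all $u,v\in Q$,
\[ (L_aL_b^{-1})(L_uL_v^{-1}) = (L_uL_v^{-1})(L_aL_b^{-1}). \]

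Next, the key step is to translate this identity of permutations into the term identity on the right. Here I would use the rack identity $L_{x*y}=L_xL_yL_x^{-1}$ from \eqref{L_f(a)}, applied repeatedly, to rewrite both sides. The cleanest route is probably to conjugate: commutation of $L_aL_b^{-1}$ with $L_uL_v^{-1}$ is equivalent to $(L_uL_v^{-1})^{-1}(L_aL_b^{-1})(L_uL_v^{-1}) = L_aL_b^{-1}$, and by \eqref{L_f(a)} the left side is $L_{c}L_{d}^{-1}$ where $c, d$ are obtained by acting with the automorphism $(L_uL_v^{-1})^{-1}$... actually it is more direct to note that $L_vL_u^{-1}(L_aL_b^{-1})L_uL_v^{-1}$ equals $L_{g(a)}L_{g(b)}^{-1}$ where $g=L_vL_u^{-1}\in\aut Q$ (since each $L_x$ and hence $g$ is an automorphism and \eqref{L_f(a)} gives $L_{g(a)}=gL_ag^{-1}$). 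Thus the commutation condition says $L_{g(a)}L_{g(b)}^{-1}=L_aL_b^{-1}$ for the automorphism $g=L_vL_u^{-1}$, i.e. $L_{g(a)}L_a^{-1}=L_{g(b)}L_b^{-1}$. Now I would apply both sides to a point, or better, use \eqref{L_f(a)} once more to write $L_{g(a)}L_a^{-1}=L_{g(a)}L_{a}^{-1}$ and recognize that this equality of permutations, when we unpack $g(a)=L_vL_u^{-1}(a)=v*(u\backslash a)$, becomes precisely a statement about $L$ of various elements. The cleanest target: commutation of $L_aL_b^{-1}$ with $L_uL_v^{-1}$ for all $u,v$ is equivalent (by substituting $u\mapsto L_u$-preimages suitably, or by working it out on the generators) to $L_{u*a}L_{u*b}^{-1}=L_{v*a}L_{v*b}^{-1}$ being independent of $u$... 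I would instead aim straight for: $(u*a)*(b*v)=(u*b)*(a*v)$ says $L_{u*a}L_{b*v}=L_{u*b}L_{a*v}$ after applying both sides — no, it says $L_{u*a}(b*v)$ as an element equals $L_{u*b}(a*v)$; using $L_{u*a}=L_uL_aL_u^{-1}$ and $b*v=L_bL_vL_b^{-1}(\cdot)$ is awkward since $b*v$ is an element not an operator. The right move: expand $(u*a)*(b*v)$ using left distributivity and the rack identity until it reads as some $\dis$-element applied to $v$, and similarly the other side, then compare.

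The heart of the computation is therefore: show $(u*a)*(b*v)=(u*b)*(a*v)$ for all $u,v$ $\iff$ $[L_aL_b^{-1},L_uL_v^{-1}]=1$ for all $u,v$. I expect the cleanest derivation to go through the chain: $(u*a)*(b*v) = L_{u*a}(b*v) = L_{u*a}L_b(v) = L_uL_aL_u^{-1}L_b(v)$ (by \eqref{L_f(a)}), and $(u*b)*(a*v) = L_uL_bL_u^{-1}L_a(v)$; so mediation for all $v$ is $L_uL_aL_u^{-1}L_b = L_uL_bL_u^{-1}L_a$, i.e. $L_aL_u^{-1}L_b = L_bL_u^{-1}L_a$, i.e. $L_aL_u^{-1}L_bL_a^{-1} = L_bL_u^{-1}$, i.e. $(L_aL_b^{-1})(L_bL_u^{-1}L_b^{-1})(L_bL_a^{-1}) = L_bL_u^{-1}$ — hmm, better: $L_aL_u^{-1}L_b=L_bL_u^{-1}L_a$ rearranges to $L_b^{-1}L_aL_u^{-1} = L_u^{-1}L_aL_b^{-1}\cdot L_b L_b^{-1}$... the symmetric form is $(L_a L_b^{-1})$ commutes with $(L_b L_u^{-1})$? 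We have $L_aL_u^{-1}L_b=L_bL_u^{-1}L_a \iff L_b^{-1}L_aL_u^{-1}L_bL_a^{-1}L_u = 1$. Write $h=L_aL_b^{-1}\in\dis(Q)$... Actually $L_aL_u^{-1}L_b = L_bL_u^{-1}L_a$ multiply left by $L_b^{-1}$ and right by $L_a^{-1}$: $L_b^{-1}L_aL_u^{-1}L_bL_a^{-1} = L_u^{-1}$, i.e. $L_b^{-1}L_a$ commutes with... set $k = L_u^{-1}$; then $L_b^{-1}L_a \cdot k \cdot (L_b^{-1}L_a)^{-1} = k$ after noting $L_bL_a^{-1}=(L_b^{-1}L_a)^{-1}\cdot$(no). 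I will carefully finish: $L_b^{-1}L_a\, L_u^{-1}\, L_b L_a^{-1}=L_u^{-1}$ means conjugating $L_u^{-1}$ by $L_b^{-1}L_a$... but the conjugator on the right is $L_bL_a^{-1}$ not $(L_b^{-1}L_a)^{-1}=L_a^{-1}L_b$. So this is not a plain commutation; rather it is $(L_b^{-1}L_a)L_u^{-1}(L_b^{-1}L_a) = L_u^{-1}L_b L_a^{-1}\cdot$(redo). This bookkeeping is exactly the main obstacle — getting the conjugation/commutation rearrangement to land precisely on the statement $L_aL_b^{-1}\in Z(\dis(Q))$, quantified over $u$. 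I am fairly confident the identity $L_aL_u^{-1}L_b=L_bL_u^{-1}L_a$ for all $u\in Q$ is equivalent to $L_aL_b^{-1}$ commuting with $L_bL_u^{-1}$ for all $u$, hence with all of $\dis(Q)$ (since $\{L_bL_u^{-1}:u\in Q\}$ generates $\dis(Q)$), so the plan is sound; I would just need to execute this rearrangement with care. The reverse direction is the same computation read backwards, and the reduction to generators uses only that centralizers are subgroups.

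In summary: (i) unfold $a\,\con{Z(\dis(Q))}\,b \iff L_aL_b^{-1}$ centralizes $\dis(Q)$; (ii) reduce to centralizing the generating set $\{L_bL_u^{-1}: u\in Q\}$; (iii) via \eqref{L_f(a)} rewrite $(u*a)*(b*v)$ and $(u*b)*(a*v)$ as $L_uL_aL_u^{-1}L_b(v)$ and $L_uL_bL_u^{-1}L_a(v)$; (iv) conclude mediation-for-all-$v$ $\iff L_aL_u^{-1}L_b = L_bL_u^{-1}L_a$; (v) rearrange this into the commutation relation. The main obstacle is step (v), the purely combinatorial permutation-algebra rearrangement ensuring the quantifier over $u$ matches centralizing a generating set of $\dis(Q)$; everything else is routine.
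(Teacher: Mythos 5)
Your proposal is correct and follows essentially the same route as the paper: both reduce the mediation identity via $L_{u*a}=L_uL_aL_u^{-1}$ to the operator identity $L_aL_u^{-1}L_b=L_bL_u^{-1}L_a$ for all $u$, and then identify this with centrality of $L_aL_b^{-1}$ in $\dis(Q)$. The step you hedge on does go through in one line: $L_aL_u^{-1}L_b=L_bL_u^{-1}L_a$ is literally $(L_aL_b^{-1})(L_bL_u^{-1})=(L_bL_u^{-1})(L_aL_b^{-1})$, and since $\{L_bL_u^{-1}:u\in Q\}$ generates $\dis(Q)$ while $L_aL_b^{-1}$ itself lies in $\dis(Q)$, quantifying over $u$ gives exactly $L_aL_b^{-1}\in Z(\dis(Q))$.
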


\begin{proof}
Observe that $(u*a)*(b*v)=(u*b)*(a*v)$ for every $u,v$ if and only if $L_{u*a}L_b=L_{u*b}L_a$ for every $u$, which is equivalent to (\dag) $L_aL_u^{-1}L_b=L_bL_u^{-1}L_a$ using equation \eqref{L_f(a)}. 

$(\subseteq)$ If $a\,\con{Z(\dis(Q))}\,b$, then $L_bL_a^{-1}\in Z(\dis(Q))$, and thus $L_aL_u^{-1}L_bL_a^{-1}=L_bL_a^{-1}L_aL_u^{-1}=L_bL_u^{-1}$ for every $u$, and (\dag) follows immediately.

$(\supseteq)$ First note that (\dag) is equivalent to $L_u^{-1}L_aL_b^{-1}=L_b^{-1}L_aL_u^{-1}$. Now, use this identity and its inverse to conclude that $L_uL_v^{-1}L_aL_b^{-1}=L_uL_b^{-1}L_aL_v^{-1}=L_aL_b^{-1}L_uL_v^{-1}$
for every $u,v$.
\end{proof}

To handle the semiregularity condition, we define an equivalence $\sigma_Q$ on $Q$ by
\[ a \,\sigma_Q\, b \ \Leftrightarrow \ \dis(Q)_a=\dis(Q)_b. \]
Here $\dis(Q)_x$ denotes the stabilizer of $x$ in $\dis(Q)$.

\begin{lemma}\label{l:sigma}
Let $Q$ be a rack. Then $\dis(Q)$ acts $\alpha$-semiregularly on $Q$ if and only if $\alpha\leq\sigma_Q$.
\end{lemma}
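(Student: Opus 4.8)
The statement to prove is: $\dis(Q)$ acts $\alpha$-semiregularly on $Q$ if and only if $\alpha \leq \sigma_Q$, where $a\,\sigma_Q\,b$ iff $\dis(Q)_a = \dis(Q)_b$. The plan is to unwind both sides to a statement about stabilizers and observe they coincide. Recall the definition: the action is $\alpha$-semiregular if for every $g \in \dis(Q)$, whenever $g(a) = a$ and $b\,\alpha\,a$, then $g(b) = b$. This says precisely that for every pair $a\,\alpha\,b$, we have $\dis(Q)_a \subseteq \dis(Q)_b$.

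So first I would note the direction $(\Leftarrow)$: if $\alpha \leq \sigma_Q$, then for $a\,\alpha\,b$ we have $a\,\sigma_Q\,b$, hence $\dis(Q)_a = \dis(Q)_b$; in particular $\dis(Q)_a \subseteq \dis(Q)_b$, which is exactly $\alpha$-semiregularity by the reformulation above. For $(\Rightarrow)$: suppose the action is $\alpha$-semiregular and take $a\,\alpha\,b$. Applying semiregularity to the pair $(a,b)$ gives $\dis(Q)_a \subseteq \dis(Q)_b$; since $\alpha$ is symmetric, $b\,\alpha\,a$ also holds, and applying semiregularity to the pair $(b,a)$ gives $\dis(Q)_b \subseteq \dis(Q)_a$. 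Hence $\dis(Q)_a = \dis(Q)_b$, i.e. $a\,\sigma_Q\,b$, so $\alpha \leq \sigma_Q$.

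The only point that requires a line of justification is the reformulation of $\alpha$-semiregularity in terms of stabilizer containment, but this is immediate from the definition: "$g(a)=a \Rightarrow g(b)=b$ for all $g$" is literally "$\dis(Q)_a \subseteq \dis(Q)_b$". There is no real obstacle here — the lemma is essentially a definitional repackaging, with the symmetry of $\alpha$ being the one substantive (if trivial) ingredient that upgrades the containment to an equality. I would keep the proof to a few lines. One should also double-check that $\sigma_Q$ is genuinely an equivalence relation (reflexivity and transitivity are clear from equality of stabilizers), though the lemma as stated only asserts the semiregularity characterization, so this need not be belabored.
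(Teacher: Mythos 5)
Your proof is correct and follows the same route as the paper: both unwind $\alpha$-semiregularity as a containment of stabilizers for each $\alpha$-related pair and use the symmetry of $\alpha$ to upgrade this to equality of stabilizers, which is the definition of $\sigma_Q$. The paper compresses this into two sentences, but the content is identical.
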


\begin{proof}
By definition, $\dis(Q)$ acts $\alpha$-semiregularly on $Q$ if and only if
$f(a)=a\Leftrightarrow f(b)=b$ for every $f\in\dis(Q)$ and every $a\,\alpha\,b$. This is equivalent to saying that the stabilizers $\dis(Q)_a$ and $\dis(Q)_b$ coincide for every $a\,\alpha\,b$, which is equivalent to $\alpha\leq\sigma_Q$.
\end{proof}

\begin{proposition}\label{p:center}
Let $Q$ be a rack. Then $\zeta_Q=\con{Z(\dis(Q))}\cap\sigma_Q$.
\end{proposition}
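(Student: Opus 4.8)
The plan is to apply Theorem \ref{t:abelian,central}(2) to the congruence $\zeta_Q$, which by definition is the largest congruence $\alpha$ that is central, i.e., satisfies $C(\alpha, 1_Q; 0_Q)$. By part (2) of Theorem \ref{t:abelian,central}, $\alpha$ is central if and only if $\dis_\alpha$ is central in $\dis(Q)$ and $\dis(Q)$ acts $\alpha$-semiregularly on $Q$. So $\zeta_Q$ is the largest congruence $\alpha$ satisfying both of these conditions. The goal is to show this largest congruence equals $\con{Z(\dis(Q))} \cap \sigma_Q$.

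First I would translate both conditions into statements about the $\con{}$ and $\sigma_Q$ operators. For the semiregularity condition: by Lemma \ref{l:sigma}, $\dis(Q)$ acts $\alpha$-semiregularly on $Q$ if and only if $\alpha \leq \sigma_Q$. For the centrality condition: if $\dis_\alpha \leq Z(\dis(Q))$, then every generator $L_aL_b^{-1}$ with $a\,\alpha\,b$ lies in $Z(\dis(Q))$, so $\alpha \leq \con{Z(\dis(Q))}$. Conversely, if $\alpha \leq \con{Z(\dis(Q))}$, then all generators of $\dis_\alpha$ lie in $Z(\dis(Q))$, hence $\dis_\alpha \leq Z(\dis(Q))$. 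Thus the centrality condition on $\dis_\alpha$ is exactly equivalent to $\alpha \leq \con{Z(\dis(Q))}$. Putting these together: $\alpha$ is central if and only if $\alpha \leq \con{Z(\dis(Q))}$ and $\alpha \leq \sigma_Q$, i.e., if and only if $\alpha \leq \con{Z(\dis(Q))} \cap \sigma_Q$.

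It then remains to check that $\con{Z(\dis(Q))} \cap \sigma_Q$ is itself a congruence, so that it is the largest central congruence and hence equals $\zeta_Q$. That $\con{Z(\dis(Q))}$ is a congruence follows from the lemma preceding Proposition \ref{p:con_N} (noting $Z(\dis(Q)) \in \N(Q)$, being characteristic in $\dis(Q)$ hence normal in $\lmlt(Q)$). That $\sigma_Q$ is an equivalence is clear; to see it is a congruence one argues directly, or observes that $\sigma_Q$ is itself expressible via the Galois machinery. Actually, the cleanest route is: since the join (in the congruence lattice sense) of two central congruences need not obviously be central in a non-modular setting, I would instead argue that $\con{Z(\dis(Q))} \cap \sigma_Q$, being below both $\con{Z(\dis(Q))}$ and $\sigma_Q$, satisfies the two conditions above and is an intersection of two congruences hence a congruence, therefore central; and any central $\alpha$ lies below it; so it is the maximum, i.e., $\zeta_Q$.

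The main obstacle I anticipate is verifying that $\sigma_Q$ is a congruence of $Q$ (not merely an equivalence), since $\zeta_Q$ must be a congruence and the argument needs $\con{Z(\dis(Q))} \cap \sigma_Q$ to be one. For this I would show compatibility directly: if $a\,\sigma_Q\,b$, i.e. $\dis(Q)_a = \dis(Q)_b$, then for any $c$, using $L_{c*a} = L_c L_a L_c^{-1}$ and conjugation by elements of $\dis(Q)$ and $\lmlt(Q)$, the stabilizers of $c*a$ and $c*b$ (and of $a*c$, $b*c$, and the division analogues) coincide; normality of $\dis(Q)$ in $\lmlt(Q)$ is what makes this go through. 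Alternatively, one may note $\sigma_Q = \con{N}$ for a suitable $N$, but the direct stabilizer computation is the safest. Once $\sigma_Q$ is known to be a congruence, everything else is the routine Galois-connection bookkeeping sketched above.
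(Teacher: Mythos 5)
Your overall strategy coincides with the paper's: use Theorem \ref{t:abelian,central}(2) and Lemma \ref{l:sigma} to show that a congruence $\alpha$ is central if and only if $\alpha\leq\con{Z(\dis(Q))}\cap\sigma_Q$, and then reduce everything to showing that this intersection is itself a congruence. All of your translation steps are correct. The gap is in the last step: you propose to prove that $\sigma_Q$ \emph{by itself} is a congruence by conjugating stabilizers. That works for left translations, since $\dis(Q)_{c*a}=L_c\dis(Q)_aL_c^{-1}$ (normality of $\dis(Q)$ in $\lmlt(Q)$ enters here), so $\dis(Q)_a=\dis(Q)_b$ gives $\dis(Q)_{c*a}=\dis(Q)_{c*b}$. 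But for right translations the same computation yields $\dis(Q)_{a*c}=L_a\dis(Q)_cL_a^{-1}$ and $\dis(Q)_{b*c}=L_b\dis(Q)_cL_b^{-1}$: these involve the stabilizer of $c$ conjugated by $L_a$ and by $L_b$, and the hypothesis $\dis(Q)_a=\dis(Q)_b$ says nothing about how $L_a$ and $L_b$ are related. So your sketch does not establish $(a*c)\,\sigma_Q\,(b*c)$, and there is no apparent reason why $\sigma_Q$ alone should be compatible with right translations in a general rack.

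This is precisely where the paper uses the other half of the intersection. Writing $\xi=\con{Z(\dis(Q))}\cap\sigma_Q$ and taking $a\,\xi\,b$, one also knows $L_bL_a^{-1}\in Z(\dis(Q))$, whence for every $f\in\dis(Q)$
\[ L_b^{-1}fL_b=L_b^{-1}f(L_bL_a^{-1})L_a=L_b^{-1}(L_bL_a^{-1})fL_a=L_a^{-1}fL_a, \]
and the right-translation compatibility of the \emph{intersection} follows. So the fix is to verify compatibility of $\xi$ directly rather than of $\sigma_Q$; the rest of your maximality bookkeeping then goes through unchanged. Your fallback, that $\sigma_Q=\con{N}$ for a suitable $N$, is also unavailable: every relation of the form $\con{N}$ contains $\lambda_Q$, but $\sigma_Q$ need not (in the four-element quandle of the example following Proposition \ref{p:medial=>nilpotent}, one has $1\,\lambda_Q\,3$ while $\dis(Q)_1\neq\dis(Q)_3$). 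Everything else in your argument, including the equivalence of $\dis_\alpha\leq Z(\dis(Q))$ with $\alpha\leq\con{Z(\dis(Q))}$ and the use of Lemma \ref{l:sigma}, matches the paper and is correct.
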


\begin{proof}
Let $\xi=\con{Z(\dis(Q))}\cap\sigma_Q$. First, we prove that $\xi$ is a congruence of $Q$. It is an intersection of two equivalences, hence it is an equivalence. Let $a\,\xi\,b$ and $c\in Q$. Since $\con{Z(\dis(Q))}$ is a congruence, it remains to prove that $(c\ast a)\,\sigma_Q\,(c\ast b)$ and $(a\ast c)\,\sigma_Q\,(b\ast c)$ (for left division, the proof is analogical). For the former claim, assume that $f(c\ast a)=c\ast a$, $f\in\dis(Q)$, or equivalently, $L_c^{-1}fL_c(a)=a$. Since $L_c^{-1}fL_c\in\dis(Q)$ and $a\,\sigma_Q\,b$, we have $L_c^{-1}fL_c(b)=b$, and thus $f(c\ast b)=c\ast b$. For the latter claim, assume that $f(a\ast c)=a\ast c$,  $f\in\dis(Q)$, or equivalently, $L_a^{-1}fL_a(c)=c$. Then 
\[ L_b^{-1}fL_b(c)=L_b^{-1}f\underbrace{L_bL_a^{-1}}_{\in Z(\dis(Q))}L_a(c)=L_b^{-1}\underbrace{L_bL_a^{-1}}_{\in Z(\dis(Q))}fL_a(c)=L_a^{-1}fL_a(c)=c,\]
and thus $f(b\ast c)=b\ast c$. We used the assumption that $a\,\con{Z(\dis(Q))}\,b$, which means that $L_bL_a^{-1}\in Z(\dis(Q))$.

In the next step, we show that every central congruence $\alpha$ is contained in $\xi$. Indeed, by Theorem \ref{t:abelian,central}, $\dis_\alpha\leq Z(\dis(Q))$, hence $\alpha\leq\con{\dis_\alpha}\leq\con{Z(\dis(Q))}$ using Proposition \ref{galois_connection} (the first inequality follows from the closure property, the second inequality from monotonicity). Lemma \ref{l:sigma} assures that $\alpha\leq\sigma_Q$.

Finally, we verify that $\xi$ is a central congruence. To show that $\dis_\xi$ is central in $\dis(Q)$, it is sufficient to look at the generators $L_aL_b^{-1}$, $a\,\xi\, b$. Then $a\,\con{Z(\dis(Q))}\,b$, hence $L_aL_b^{-1}\in Z(\dis(Q))$. Since $\xi\leq\sigma_Q$, Lemma \ref{l:sigma} assures that $\dis(Q)$ acts $\xi$-regularly.
\end{proof}

\begin{corollary}\label{c:center_faith}
Let $Q$ be a faithful quandle. Then $\zeta_Q=\con{Z(\dis(Q))}$.
\end{corollary}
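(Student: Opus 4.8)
The plan is to derive this as a short consequence of Proposition~\ref{p:center} together with the faithful-case characterization of central congruences in Corollary~\ref{abelianness faithful case}. Proposition~\ref{p:center} already yields $\zeta_Q=\con{Z(\dis(Q))}\cap\sigma_Q$, and in particular $\zeta_Q\le\con{Z(\dis(Q))}$; so the whole content is to establish the reverse inclusion $\con{Z(\dis(Q))}\le\zeta_Q$ under the faithfulness hypothesis.

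For that, first observe that $Z(\dis(Q))$ belongs to $\N(Q)$: since $\dis(Q)\unlhd\lmlt(Q)$, conjugation by any element of $\lmlt(Q)$ restricts to an automorphism of $\dis(Q)$ and hence fixes the characteristic subgroup $Z(\dis(Q))$. Thus $\alpha:=\con{Z(\dis(Q))}$ is a genuine congruence of $Q$. By the kernel property in Proposition~\ref{p:con_N}(1) we have $\dis_\alpha\le Z(\dis(Q))$, so $\dis_\alpha$ is central in $\dis(Q)$. Now Corollary~\ref{abelianness faithful case}(2), applied to the faithful quandle $Q$, tells us that $\alpha$ is a central congruence, and therefore $\alpha\le\zeta_Q$ since $\zeta_Q$ is by definition the largest central congruence. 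Combining with the inclusion coming from Proposition~\ref{p:center} gives $\zeta_Q=\con{Z(\dis(Q))}$.

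There is essentially no serious obstacle here: the hard work is already contained in Proposition~\ref{p:center}, Corollary~\ref{abelianness faithful case} and the Galois connection of Proposition~\ref{galois_connection}. The only point needing any care is the routine verification that $Z(\dis(Q))\in\N(Q)$, required so that $\con{Z(\dis(Q))}$ is defined. If one prefers to bypass Proposition~\ref{p:center} entirely, one can argue directly: $\dis_\alpha\le Z(\dis(Q))$ forces $[\dis(Q),\dis_\alpha]=1$, so Lemma~\ref{l:faithful_semiregularity} with $N=\dis(Q)$ shows that $\dis(Q)$ acts $\alpha$-semiregularly, i.e.\ $\alpha\le\sigma_Q$ by Lemma~\ref{l:sigma}; this, combined with Theorem~\ref{t:abelian,central}(2) applied to $\zeta_Q$ and the closure property of the connection, again yields the equality.
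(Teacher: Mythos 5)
Your proof is correct and follows essentially the same route as the paper: both arguments rest on the kernel property $\dis_{\con{Z(\dis(Q))}}\le Z(\dis(Q))$ from Proposition~\ref{p:con_N}(1) together with Lemma~\ref{l:faithful_semiregularity}, which you invoke via Corollary~\ref{abelianness faithful case}(2) while the paper applies it directly with $N=\dis(Q)$ to get $\con{Z(\dis(Q))}\leq\sigma_Q$ and then quotes Proposition~\ref{p:center}. Your sketched ``alternative'' at the end is in fact precisely the paper's proof.
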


\begin{proof}
Let $Q$ be faithful. We prove that $\con{Z(\dis(Q))}\leq\sigma_Q$. By Lemma \ref{l:sigma}, we shall prove that $\dis(Q)$ acts $\con{Z(\dis(Q))}$-semiregularly on $Q$. We have
\[ [\dis(Q),\dis_{\con{Z(\dis(Q))}}]\leq[\dis(Q),Z(\dis(Q))]=1, \]
and thus semiregularity follows from Lemma \ref{l:faithful_semiregularity}.
\end{proof}

\begin{corollary}\label{c:center_med}
Let $Q$ be a medial rack. Then $\zeta_Q=\sigma_Q$.
\end{corollary}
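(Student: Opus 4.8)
The plan is to combine Proposition \ref{p:center} with the characterization of mediality recalled in Section \ref{s:2} (a rack is medial if and only if its displacement group is abelian, \cite[Proposition 2.4]{HSV}). Proposition \ref{p:center} gives $\zeta_Q=\con{Z(\dis(Q))}\cap\sigma_Q$ for every rack, so it suffices to show that the first factor collapses to $1_Q$ under the mediality hypothesis.

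Concretely, I would first observe that if $Q$ is medial then $\dis(Q)$ is abelian, hence $Z(\dis(Q))=\dis(Q)$. Then I would invoke Proposition \ref{p:con_N}(2), which asserts $\con{N}=1_Q$ if and only if $N=\dis(Q)$; applied to $N=\dis(Q)=Z(\dis(Q))$ this yields $\con{Z(\dis(Q))}=1_Q$. Substituting into Proposition \ref{p:center} gives $\zeta_Q=1_Q\cap\sigma_Q=\sigma_Q$, as desired.

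There is essentially no obstacle here: the statement is a direct specialization of Proposition \ref{p:center}, and the only ingredient beyond it is the (already cited) equivalence between mediality and commutativity of $\dis(Q)$, which makes the centralizer term trivial. The one point worth a sentence of care is making explicit that $Z(G)=G$ precisely when $G$ is abelian, so that Proposition \ref{p:con_N}(2) applies verbatim; after that the computation is immediate.

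\begin{proof}
Since $Q$ is medial, its displacement group $\dis(Q)$ is abelian by \cite[Proposition 2.4]{HSV}, and therefore $Z(\dis(Q))=\dis(Q)$. By Proposition \ref{p:con_N}(2), $\con{\dis(Q)}=1_Q$, hence $\con{Z(\dis(Q))}=1_Q$. Now Proposition \ref{p:center} gives
\[ \zeta_Q=\con{Z(\dis(Q))}\cap\sigma_Q=1_Q\cap\sigma_Q=\sigma_Q. \qedhere \]
\end{proof}
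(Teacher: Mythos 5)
Your proof is correct and follows the same route as the paper: mediality makes $\dis(Q)$ abelian, so $Z(\dis(Q))=\dis(Q)$ and $\con{Z(\dis(Q))}=1_Q$, whence Proposition \ref{p:center} collapses to $\zeta_Q=\sigma_Q$. The only difference is that you make explicit the appeal to Proposition \ref{p:con_N}(2), which the paper leaves implicit.
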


\begin{proof}
If $Q$ is medial, then $\dis(Q)$ is abelian, and thus $\con{Z(\dis(Q))}=1_Q$.
\end{proof}

\subsection{The $\O_N$ and $\lambda_Q$ congruences}\label{ss:special_con}

\begin{lemma}\label{abelian subgroup gives abelian cong}
Let $Q$ be a rack and $N\in\N(Q)$ abelian (resp. central in $\dis(Q)$). Then $\O_N$ is an abelian (resp. central) congruence of $Q$.  In particular $Z(\dis(Q))\leq \dis^{\zeta_Q}$.
\end{lemma}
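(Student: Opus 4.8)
The plan is to reduce the statement to Theorem \ref{t:abelian,central} (in its rack form), applied to the congruence $\alpha = \mathcal O_N$. By Theorem \ref{t:abelian,central}, it suffices to show that $\dis_{\mathcal O_N}$ is abelian (resp. central in $\dis(Q)$) and that $\dis(Q)$ (resp.\ $\dis_{\mathcal O_N}$ in the abelian case) acts $\mathcal O_N$-semiregularly on $Q$. The key tool for both is Proposition \ref{p:con_N}(1), which gives $\dis_{\con N}\le N\le \dis^{\con N}$; combined with the observation that $\mathcal O_N \le \con N$ (Proposition \ref{p:con_N}(4)) and monotonicity of the $\dis$ operator, I would extract the inclusion $\dis_{\mathcal O_N}\le N$. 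This is the crucial step, since it transfers the group-theoretic hypothesis on $N$ directly to $\dis_{\mathcal O_N}$: if $N$ is abelian then so is its subgroup $\dis_{\mathcal O_N}$, and if $N$ is central in $\dis(Q)$ then so is $\dis_{\mathcal O_N}$ (a subgroup of a central subgroup is central).

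Next I would dispatch the semiregularity conditions. For the abelian case, I need $\dis_{\mathcal O_N}$ to act $\mathcal O_N$-semiregularly; but by Lemma \ref{l:kernel}, $\dis^{\mathcal O_N}=\setof{h\in\dis(Q)}{h(a)\,\mathcal O_N\, a \text{ for all }a}$, and since $N\le \dis^{\mathcal O_N}$ (indeed $N$ and $\dis^{\mathcal O_N}$ share the same orbits by Proposition \ref{p:dis_alpha2}(2)), in particular $\dis_{\mathcal O_N}\le N\le\dis^{\mathcal O_N}$, so every element of $\dis_{\mathcal O_N}$ moves points only within $\mathcal O_N$-blocks. If $g\in\dis_{\mathcal O_N}$ fixes $a$, then for any $b\,\mathcal O_N\,a$ there is $f\in N$ with $b=f(a)$; since $N$ is abelian and $g\in N$, we get $g(b)=gf(a)=fg(a)=f(a)=b$. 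The same computation, using that $g$ commutes with $f$ because $N$ is central in $\dis(Q)$ (hence abelian), handles the central case and moreover shows that \emph{every} element of $\dis(Q)$ — not just of $N$ — acts $\mathcal O_N$-semiregularly: if $g\in\dis(Q)$ fixes $a$ and $b=f(a)$ with $f\in N\le Z(\dis(Q))$, then $g(b)=gf(a)=fg(a)=f(a)=b$.

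For the final sentence, I would apply the already-proved central part with $N = Z(\dis(Q))$: this is certainly a member of $\N(Q)$ (it is characteristic in $\dis(Q)$, hence normal in $\lmlt(Q)$) and central in $\dis(Q)$, so $\mathcal O_{Z(\dis(Q))}$ is a central congruence of $Q$, whence $\mathcal O_{Z(\dis(Q))}\le\zeta_Q$ by maximality of the center. Then Proposition \ref{p:con_N}(1) applied to $N=Z(\dis(Q))$ gives $Z(\dis(Q))\le\dis^{\con{Z(\dis(Q))}}$, and since $\mathcal O_{Z(\dis(Q))}\le\zeta_Q\le\con{\dis^{\zeta_Q}}$... actually the cleanest route is: $Z(\dis(Q))\le\dis^{\mathcal O_{Z(\dis(Q))}}\le\dis^{\zeta_Q}$, where the first inclusion is Proposition \ref{p:dis_alpha2}(2) (or Lemma \ref{l:kernel}) and the second is monotonicity of $\dis^{(-)}$ applied to $\mathcal O_{Z(\dis(Q))}\le\zeta_Q$. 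I expect no serious obstacle; the only point requiring care is keeping straight which semiregularity condition (on $N$ versus on all of $\dis(Q)$) is needed in the abelian versus central case, and confirming the monotonicity direction of the $\dis^{(-)}$ operator, both of which follow from the lemmas already established.
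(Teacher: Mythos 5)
Your proposal is correct and follows essentially the same route as the paper: reduce to Theorem \ref{t:abelian,central}, extract $\dis_{\O_N}\leq N$ from $\O_N\leq\con{N}$ via the Galois connection, and verify semiregularity by the commutation computation $g(b)=gf(a)=fg(a)=b$. The concluding inclusion $Z(\dis(Q))\leq\dis^{\zeta_Q}$ is also obtained exactly as in the paper, from $\O_{Z(\dis(Q))}\leq\zeta_Q$ together with Lemma \ref{l:kernel}.
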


\begin{proof}
According to Theorem \ref{t:abelian,central}, we need to check that $\dis_{\O_N}$ is abelian (resp. central in $\dis(Q)$) and that $\dis_{\O_N}$ (resp. $\dis(Q)$) acts $\O_N$-semiregularly on $Q$.

First, observe that $\dis_{\O_N}\leq N$. By Proposition \ref{p:con_N}(4), $\O_N\leq\con N$, and applying the Galois connection we obtain $\dis_{\O_N}\leq\dis_{\con N}\leq N$.

Consequently, $\dis_{\O_N}$ is abelian (resp. central in $\dis(Q)$), since so is $N$. Let $f\in\dis_{\O_N}$ (resp. $f\in\dis(Q)$) and consider $a\in Q$ such that $f(a)=a$. For any $b\,\O_N\,a$, take $g\in N$ such that $b=g(a)$. Then $f(b)=f(g(a))=g(f(a))=g(a)=b$, where $fg=gf$ follows from abelianness (resp. centrality) of $N$.

In particular $\mathcal{O}_{Z(\dis(Q))}\leq \zeta_Q$, so using Lemma \ref{l:kernel} $Z(\dis(Q))\leq \dis^{\zeta_Q}$.
\end{proof}

\begin{proposition}\label{p:medial=>nilpotent}
Medial racks are nilpotent of length at most 2.
\end{proposition}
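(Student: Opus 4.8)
The plan is to exhibit a short central series for an arbitrary medial rack $Q$ using the results already established about the displacement group and the $\O_N$ congruences. Since $Q$ is medial, $\dis(Q)$ is abelian by \cite[Proposition 2.4]{HSV}, so in particular $Z(\dis(Q))=\dis(Q)$. The natural candidate for the bottom layer of the series is $\zeta_Q$, the center of $Q$, and the natural way to force $\zeta_Q$ to be large is to use Lemma \ref{abelian subgroup gives abelian cong}: since $N=\dis(Q)$ is central in $\dis(Q)$, the congruence $\O_{\dis(Q)}=\O_Q$ (the orbit decomposition) is a central congruence of $Q$. Thus $\O_Q\leq\zeta_Q$, and $Q/\zeta_Q$ is a quotient of $Q/\O_Q$.

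The second step is to check that $Q/\O_Q$ is a projection quandle, hence abelian. Indeed, $\dis(Q/\O_Q)=\pi_{\O_Q}(\dis(Q))$ and, since $\O_Q$ is the orbit decomposition, every left translation of $Q/\O_Q$ is the identity (each block is a single orbit, so $L_{[a]}$ fixes every point of $Q/\O_Q$), whence $\dis(Q/\O_Q)=1$ and $Q/\O_Q$ is a projection quandle. A projection quandle is abelian by Theorem \ref{t:abelian,central}(1), since its displacement group is trivial and therefore abelian and (vacuously) semiregular. Combining the two steps: the chain
\[ 0_Q \;\leq\; \O_Q \;\leq\; 1_Q \]
has $\O_Q$ central in $Q$ (first step) and $1_Q/\O_Q$ abelian in $Q/\O_Q$ — but for nilpotence we need the \emph{top} quotient to be \emph{central}, not merely abelian, in $Q/\O_Q$. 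Since $Q/\O_Q$ is a projection quandle, $1_{Q/\O_Q}$ is in fact central (its center is everything: $\dis(Q/\O_Q)=1$ is trivially central in itself and acts semiregularly), so the chain is a central series of length $2$, proving $Q$ is nilpotent of length at most $2$.

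The only point that needs a little care — and the one I expect to be the main (mild) obstacle — is the bookkeeping with Lemma \ref{l:2rd_iso_thm}: one must confirm that $1_{Q}/\O_Q$ being a central congruence of $Q/\O_Q$ is exactly the top-layer condition in the definition of nilpotence, i.e. that $\alpha_2/\alpha_1$ is central in $Q/\alpha_1$ with $\alpha_1=\O_Q$, $\alpha_2=1_Q$. This is immediate from the definitions, but it is worth spelling out that "central congruence of $Q/\O_Q$" applied to the top congruence $1_{Q/\O_Q}$ is precisely the assertion that $Q/\O_Q$ is an abelian algebra whose center is the full congruence — equivalently, that $Q/\O_Q$ is abelian — which we have verified. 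One should also double-check the degenerate cases: if $Q$ is connected then $\O_Q=1_Q$ and $Q$ is already central (length $\leq 1$); if $Q$ is itself a projection quandle then $\O_Q=0_Q$ and again length $\leq 1$; in all cases the length is at most $2$, as claimed.
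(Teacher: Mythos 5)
Your proof is correct and follows essentially the same route as the paper: the chain $0_Q\leq\O_Q\leq 1_Q$, centrality of $\O_Q$ via Lemma \ref{abelian subgroup gives abelian cong} with $N=\dis(Q)$, and the observation that $Q/\O_Q$ has trivial displacement group and is therefore abelian (the paper calls it a permutation rack; your sharper claim that it is a projection quandle is also correct, since $a*b$ always lies in the $\lmlt(Q)$-orbit of $b$). Your extra remark that for the top congruence $1_{Q/\O_Q}$ abelianness and centrality coincide is a point the paper leaves implicit, and you resolve it correctly.
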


\begin{proof}
If $Q$ is a medial rack, then $\dis(Q)$ is abelian, and the chain $0_Q\leq\O_{Q}\leq1_Q$ is a witness.
The congruence $\O_{Q}$ is central by Lemma \ref{abelian subgroup gives abelian cong} for $N=\dis(Q)$. The factor $Q/\O_{Q}$ is a permutation rack, and thus abelian: for $[a],[b],[c]\in Q/\O_{Q}$, we have $[a]*[c]=[a*c]=[b*c]=[b]*[c]$, because $b*c=L_bL_a^{-1}(a*c)$.
\end{proof}

The Cayley kernel is always abelian: indeed, $\dis_{\lambda_Q}=1$, hence it is abelian and acts $\lambda_Q$-semiregularly.
However, it may not be central.

\begin{example}
Consider the quandle $Q$ with the multiplication table
\begin{center}
\begin{tabular}{|cccc| }
\hline
1 & 2 & 3 & 4 \\ 
1 & 2 & 4 & 3 \\
1 & 2 & 3 & 4 \\
1 & 2 & 3 & 4  \\
\hline
\end{tabular}\,.
\end{center}
Indeed, $\dis(Q)=\langle f\rangle$ where $f=(3\ 4)$, but it does not act $\lambda_Q$-semiregularly, since $f(1)=1$, $1\,\lambda_Q\,3$ and $f(3)=4$.
\end{example}

\section{Nilpotent and solvable racks}\label{s:6}

\subsection{Nilpotence and solvability of racks, and of their associated groups}\label{s:nilpotent,solvable}

The two lemmas below prove our second main result, Theorem \ref{t:nilpotent,solvable}.
In the two proofs, let $\Gamma^{(n)}$ denote the $n$-th member of the derived series, and $\Gamma_{(n)}$ the $n$-th subgroup of the lower central series, of a given group. The subgroups $\Gamma^{(n)}$ and $\Gamma_{(n)}$ correspond to the group congruences $\gamma^{n}$ and $\gamma_{n}$, respectively. 

\begin{lemma}\label{solvable then dis solv}
Let $Q$ be a rack. If $Q$ is nilpotent (resp. solvable) of length $n$, then $\dis(Q)$ is a nilpotent (resp. solvable) group of length $\leq 2n-1$.
\end{lemma}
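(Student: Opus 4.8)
The plan is to prove the two statements (nilpotence and solvability) simultaneously by induction on $n$, the length of the rack-theoretic series, exploiting the Galois connection and, crucially, Proposition \ref{p:dis_alpha2}(1), which says $[\dis^\alpha,\lmlt(Q)]\leq\dis_\alpha$ for any congruence $\alpha$ of a rack. The base case $n=0$ is trivial: if $Q$ is a one-element rack then $\dis(Q)=1$. For the inductive step, suppose $0_Q=\alpha_0\leq\alpha_1\leq\dots\leq\alpha_n=1_Q$ witnesses nilpotence (resp. solvability) of $Q$, so $\alpha_1$ is central (resp. abelian) in $Q$ and $\alpha_n/\alpha_1=1_{Q/\alpha_1}$ down to $\alpha_1/\alpha_1=0_{Q/\alpha_1}$ witnesses that $Q/\alpha_1$ is nilpotent (resp. solvable) of length $n-1$. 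By the induction hypothesis, $\dis(Q/\alpha_1)$ is nilpotent (resp. solvable) of length $\leq 2(n-1)-1=2n-3$. Since $\dis(Q/\alpha_1)\simeq\dis(Q)/\dis^{\alpha_1}$, this tells us that $\Gamma_{(2n-3)}$ (resp. $\Gamma^{(2n-3)}$) of $\dis(Q)$ is contained in $\dis^{\alpha_1}$.

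Next I would analyze $\dis^{\alpha_1}$. In the solvable case, $\alpha_1$ abelian gives, via Theorem \ref{t:abelian,central}(1), that $\dis_{\alpha_1}$ is abelian; combined with Proposition \ref{p:dis_alpha2}(1) we get $[\dis^{\alpha_1},\dis^{\alpha_1}]\leq[\dis^{\alpha_1},\lmlt(Q)]\leq\dis_{\alpha_1}$, which is abelian, so $\dis^{\alpha_1}$ is solvable of length $\leq 2$. Hence $\Gamma^{(2n-3+2)}=\Gamma^{(2n-1)}$ of $\dis(Q)$ is trivial, giving solvability of length $\leq 2n-1$. In the nilpotent case, $\alpha_1$ central gives, via Theorem \ref{t:abelian,central}(2), that $\dis_{\alpha_1}$ is central in $\dis(Q)$, i.e. $[\dis_{\alpha_1},\dis(Q)]=1$; with Proposition \ref{p:dis_alpha2}(1) this yields $[\dis^{\alpha_1},\dis(Q)]\leq\dis_{\alpha_1}\leq Z(\dis(Q))$, so $[\dis^{\alpha_1},\dis(Q),\dis(Q)]=1$. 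Thus $\dis^{\alpha_1}$ sits two steps from the top of the lower central series of $\dis(Q)$: from $\Gamma_{(2n-3)}\leq\dis^{\alpha_1}$ we get $\Gamma_{(2n-2)}=[\Gamma_{(2n-3)},\dis(Q)]\leq[\dis^{\alpha_1},\dis(Q)]\leq\dis_{\alpha_1}\leq Z(\dis(Q))$, and then $\Gamma_{(2n-1)}=[\Gamma_{(2n-2)},\dis(Q)]\leq[Z(\dis(Q)),\dis(Q)]=1$, giving nilpotence of length $\leq 2n-1$.

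The step I expect to be the main obstacle is the bookkeeping around the isomorphism $\dis(Q)/\dis^{\alpha_1}\simeq\dis(Q/\alpha_1)$ and the transfer of the lower-central/derived series through this quotient: one must be careful that $\dis^{\alpha_1}$ is indeed normal in $\lmlt(Q)$ (not merely in $\dis(Q)$) so that Proposition \ref{p:dis_alpha2}(1) applies cleanly, and that the commutator identities $[\Gamma_{(k)},G]=\Gamma_{(k+1)}$ etc. are used with the correct group $G=\dis(Q)$ throughout rather than accidentally with $\lmlt(Q)$. A secondary subtlety is verifying that the induction hypothesis is being applied to a rack of strictly smaller series length, which is immediate since quotients of racks are racks and the series for $Q/\alpha_1$ obtained by taking $\alpha_i/\alpha_1$ for $i\geq 1$ genuinely has length $n-1$ (using that $\alpha_{i+1}/\alpha_i$ central resp. abelian passes to the corresponding quotient, e.g. by Lemma \ref{l:2rd_iso_thm}). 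Once these points are pinned down, the arithmetic $2(n-1)-1+2=2n-1$ closes the induction in both cases.
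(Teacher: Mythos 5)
Your proposal is correct and follows essentially the same route as the paper: induct on the length, pass to $Q/\alpha_1$, use $\Gamma_{(2n-3)}\leq\dis^{\alpha_1}$ together with $[\dis^{\alpha_1},\lmlt(Q)]\leq\dis_{\alpha_1}$ (Proposition \ref{p:dis_alpha2}(1)) and the centrality (resp.\ abelianness) of $\dis_{\alpha_1}$ to descend two more steps of the lower central (resp.\ derived) series. The only slip is the base case: anchoring at $n=0$ makes the $n=1$ step refer to $\Gamma_{(-1)}$ (and asserts the trivial group is nilpotent of length $\leq -1$); the paper instead starts at $n=1$, where abelianness of $Q$ gives abelianness of $\dis(Q)$ directly via Theorem \ref{t:abelian,central}.
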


\begin{proof}
We proceed by induction on the length $n$. For $n=1$, the rack $Q$ is abelian, hence $\dis(Q)$ is an abelian group and the statement holds.
In the induction step, assume that the statement holds for all racks that are nilpotent (resp. solvable) of length $\leq n-1$.
Consider a chain of congruences \[ 0_{Q}=\alpha_0\leq\alpha_1\leq\ldots\leq \alpha_n= 1_{Q}\]
such that $\alpha_{i+1}/\alpha_i$ is central (resp. abelian) in $Q/\alpha_i$, for every $i$. In particular, $\alpha_1$ is central (resp. abelian) in $Q$ and the rack $Q/\alpha_1$ is nilpotent (resp. solvable) of length $n-1$, as witnessed by the series 
\[ 0_{Q/\alpha_1}=\alpha_1/\alpha_1\leq\alpha_2/\alpha_1\leq\ldots\leq \alpha_n/\alpha_1=1_{Q/\alpha_1} \]
(see Lemma \ref{l:2rd_iso_thm}). By the induction assumption, $\dis(Q/\alpha_1)$ is nilpotent (resp. solvable) of length $\leq2n-3$.
Now, consider the series $\Gamma_{(i)}$ (resp. $\Gamma^{(i)}$) in $\dis(Q)$ and project it into $\dis(Q/\alpha_1)$. Since $\pi_{\alpha_1}(\Gamma_{(2n-3)})=1$, we obtain that
$\Gamma_{(2n-3)}\leq\Ker{\pi_{\alpha_1}}=\dis^{\alpha_1}$ (resp. analogically for $\Gamma^{(2n-3)}$). Now, in case of nilpotence, we have
\begin{align*}
\Gamma_{(2n-1)} &= [[\Gamma_{(2n-3)},\dis(Q)],\dis(Q)] \\
&\leq [[ \dis^{\alpha_1},\dis(Q)],\dis(Q)] \leq [\dis_{\alpha_1},\dis(Q)]=1,
\end{align*}
using Proposition \ref{p:dis_alpha2}(1) in the penultimate step, and centrality of $\dis_{\alpha_1}$ (by Theorem \ref{t:abelian,central}) in the ultimate step. In case of solvability, we have
\begin{align*}
\Gamma^{(2n-1)} &= [[\Gamma^{(2n-3)},\Gamma^{(2n-3)}],[\Gamma^{(2n-3)},\Gamma^{(2n-3)}]] \\
&\leq [[ \dis^{\alpha_1},\dis^{\alpha_1}],[ \dis^{\alpha_1},\dis^{\alpha_1}]] \leq [\dis_{\alpha_1},\dis_{\alpha_1}]=1, 
\end{align*}
using Proposition \ref{p:dis_alpha2}(1), and abelianness of $\dis_{\alpha_1}$.
\end{proof}

The bound on the length is tight already for $n=2$. For example, one can check in the RIG library that non-principal latin quandles of size 27 are nilpotent of length $2$, but their displacement groups are nilpotent of length $3$; and non-principal latin quandles of size 28 are solvable of length $2$, but their displacement groups are solvable of length $3$.

\begin{lemma}\label{Dis solv implies solv}
Let $Q$ be a rack. If $\dis(Q)$ is nilpotent (resp. solvable) of length $n$, then $Q$ is nilpotent (resp. solvable) of length $\leq n+1$.
\end{lemma}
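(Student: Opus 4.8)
The plan is to prove the lemma by induction on $n$, using the penultimate term of the appropriate series of $\dis(Q)$ to extract one central (resp. abelian) congruence of $Q$, and then invoking the induction hypothesis on a quotient. In the base case $n=0$ we have $\dis(Q)=1$, so all left translations coincide, $Q$ is a permutation rack, and by Theorem \ref{t:abelian,central} it is abelian, hence nilpotent (resp. solvable) of length $\leq 1=n+1$.

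For the inductive step, assume $\dis(Q)$ is nilpotent (resp. solvable) of length $n\geq1$, and let $N$ be the penultimate term of the lower central series $\dis(Q)=\Gamma_{(0)}\geq\cdots\geq\Gamma_{(n)}=1$ (resp. of the derived series $\Gamma^{(0)}\geq\cdots\geq\Gamma^{(n)}=1$). First I would note that $N$ is characteristic in $\dis(Q)$, and since $\dis(Q)\unlhd\lmlt(Q)$ (using \eqref{L_f(a)}), it lies in $\N(Q)$; moreover $[N,\dis(Q)]=\Gamma_{(n)}=1$ in the nilpotent case (so $N$ is central in $\dis(Q)$), while $[N,N]=\Gamma^{(n)}=1$ in the solvable case (so $N$ is abelian). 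Lemma \ref{abelian subgroup gives abelian cong} then makes the orbit congruence $\beta:=\O_N$ central (resp. abelian) in $Q$. Passing to $\bar Q:=Q/\beta$, we have $\dis(\bar Q)\cong\dis(Q)/\dis^{\beta}$, and $N\leq\dis^{\O_N}=\dis^{\beta}$ is immediate from Lemma \ref{l:kernel}; hence $\dis(\bar Q)$ is a homomorphic image of $\dis(Q)/N$, which is nilpotent (resp. solvable) of length $\leq n-1$ because $N$ is the penultimate term of the series. Thus $\dis(\bar Q)$ has length $\leq n-1$, and by the induction hypothesis $\bar Q$ is nilpotent (resp. solvable) of length $\leq n$. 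Taking a witnessing chain $0_{\bar Q}=\delta_0\leq\cdots\leq\delta_n=1_{\bar Q}$ and pulling it back along $\pi_\beta$ to $\beta=\theta_0\leq\theta_1\leq\cdots\leq\theta_n=1_Q$, the chain $0_Q\leq\beta=\theta_0\leq\cdots\leq\theta_n=1_Q$ has length $n+1$; its first quotient $\beta$ is central (resp. abelian) in $Q$ by construction, and each later quotient $\theta_{j+1}/\theta_j$ is central (resp. abelian) in $Q/\theta_j$ because, by Lemma \ref{l:2rd_iso_thm}, this is equivalent to centrality (resp. abelianness) of $\delta_{j+1}/\delta_j$ in $\bar Q/\delta_j$. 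Hence $Q$ is nilpotent (resp. solvable) of length $\leq n+1$.

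I do not expect a genuine obstacle here; the argument is mostly bookkeeping, and the two points that need care are: (i) that replacing $Q$ by $Q/\O_N$ really lowers the nilpotence/solvability length of the displacement group by at least one — this rests on the inclusion $N\leq\dis^{\O_N}$ together with the identification $\dis(Q/\O_N)=\dis(Q)/\dis^{\O_N}$, which together exhibit $\dis(\bar Q)$ as a quotient of $\dis(Q)/N$; and (ii) that prepending the single congruence $\O_N$ to a witnessing chain of $\bar Q$ remains a witnessing chain for $Q$, which is exactly the second-isomorphism-type statement of Lemma \ref{l:2rd_iso_thm}. A non-inductive variant would instead write down the explicit chain $0_Q=\O_{\Gamma_{(n)}}\leq\O_{\Gamma_{(n-1)}}\leq\cdots\leq\O_{\Gamma_{(0)}}=\O_{\dis(Q)}\leq 1_Q$ and check each quotient directly via Lemma \ref{abelian subgroup gives abelian cong}, noting that $Q/\O_{\dis(Q)}$ is a permutation rack (so the top step is fine); this uses the same ingredients.
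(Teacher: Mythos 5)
Your proof is correct and follows essentially the same route as the paper's: take the last nontrivial term $N$ of the lower central (resp.\ derived) series, use Lemma \ref{abelian subgroup gives abelian cong} to make $\O_N$ central (resp.\ abelian), note $N\leq\dis^{\O_N}$ so that $\dis(Q/\O_N)$ has length $\leq n-1$, and prepend $\O_N$ to the chain obtained by induction via Lemma \ref{l:2rd_iso_thm}. The only cosmetic difference is your base case $n=0$ (trivial displacement group) versus the paper's $n=1$ (abelian displacement group, handled by Proposition \ref{p:medial=>nilpotent}); both are fine.
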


\begin{proof}
We proceed by induction. 
For $n=1$, the group $\dis(Q)$ is abelian, and Proposition \ref{p:medial=>nilpotent} assures that $Q$ is nilpotent (and thus solvable, too) of length $\leq2$.
In the induction step, assume that the statement holds for all racks with the displacement group nilpotent (resp. solvable) of length $\leq n-1$.

First, observe that, for any $N\in\N(Q)$, $\pi_{\O_N}(N)=1$: indeed, for every $f\in N$ and $a\in Q$, we have $f(a)\,\O_N\,a$, hence $f$ acts identically on $Q/\O_N$. Therefore, $N\leq\Ker{\pi_{\O_N}}=\dis^{\O_N}$.

Now, take $N=\Gamma_{(n-1)}$ (resp. $N=\Gamma^{(n-1)}$). Since $N$ is central (resp. abelian) in $\dis(Q)$, the congruence $\O_N$ is also central (resp. abelian), by Lemma \ref{abelian subgroup gives abelian cong}. From the observation we obtain that $N\leq\dis^{\O_N}$. Therefore, the group $\dis(Q/\O_N)\simeq\dis(Q)/\dis^{\O_N}$ is nilpotent (resp. solvable) of length $\leq n-1$, and by the induction assumption, $Q/\O_N$ is nilpotent (resp. solvable) of length $m\leq n$.
Let
\[ 0_{Q/\O_N}=\O_N/\O_N\leq\alpha_1/\O_N\leq\ldots\leq \alpha_m/\O_N=1_{Q/\O_N} \]
be the chain of congruences that witnesses nilpotence (resp. solvability). Then the chain
\[ 0_Q\leq\O_N\leq\alpha_1\leq\ldots\leq \alpha_m=1_Q \]
witnesses that $Q$ is nilpotent (resp. solvable) of length $\leq n+1$, using Lemma \ref{l:2rd_iso_thm}.
\end{proof}

The bound on the length is tight already for $n=1$. For example, the 3-element quandle with two orbits has an abelian displacement group, but the its action is not semiregular, hence $Q$ cannot be abelian \cite{JPSZ2}.

According to \cite{Smith}, finite (left and right) distributive quasigroups have nilpotent displacement groups, so we have the following corollary.

\begin{corollary}\label{finite distributive}
Finite distributive quasigroups are nilpotent.
\end{corollary}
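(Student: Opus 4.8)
The plan is to deduce the corollary directly from Theorem~\ref{t:nilpotent,solvable}(2) together with the group-theoretic input of Smith quoted just above. A distributive quasigroup is in particular left distributive and latin, hence a latin quandle; read as an algebraic structure in the language $\{*,\backslash\}$ it is a rack (indeed a connected quandle), so Theorem~\ref{t:nilpotent,solvable} applies to it. By \cite{Smith}, if moreover $Q$ is finite then $\dis(Q)$ is a nilpotent group, and Theorem~\ref{t:nilpotent,solvable}(2) then yields at once that $Q$ is nilpotent. The whole argument is thus a one-line combination of two previously established facts.

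Before declaring victory one should address the choice of operations, since a distributive quasigroup comes equipped with \emph{two} divisions while our racks carry only the left one. As recalled in Section~\ref{s:2}, finiteness of $Q$ makes both divisions term operations of $(Q,*)$: if $k$ is the least common multiple of the orders of the left translations of $Q$, then $a\backslash b=L_a^{\,k-1}(b)$, and symmetrically for the right division. Consequently congruences, the centralizing relation, and hence nilpotence in the sense of universal algebra are the same whether we regard $Q$ as $(Q,*)$, as the rack reduct $(Q,*,\backslash)$, or as the full quasigroup; we may therefore work with the rack reduct without loss of generality, which is exactly what Theorem~\ref{t:nilpotent,solvable} is about.

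The only genuine point to verify — and it is the bookkeeping step I expect to be the sole (mild) obstacle — is that the object called the displacement group in \cite{Smith} coincides with the group $\dis(Q)=\langle L_aL_b^{-1}\,:\,a,b\in Q\rangle$ of the present paper, and that ``nilpotent'' there is meant in the ordinary group-theoretic sense; this is settled by the standard structure theory of distributive quasigroups and is already implicit in the sentence preceding the corollary. Once that identification is in place, no further argument is needed.
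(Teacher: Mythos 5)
Your proof is correct and follows essentially the same route as the paper: the paper simply cites Smith's result that finite distributive quasigroups have nilpotent displacement groups and invokes Theorem~\ref{t:nilpotent,solvable}(2). Your additional remarks on the choice of operations (both divisions being term operations in the finite case, as recalled in Section~\ref{s:2}) are sound but not needed beyond what the paper already sets up.
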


\subsection{Prime decomposition for nilpotent quandles}\label{s:nilpotent_prime_decomposition}

Theorem \ref{t:nilpotent,solvable} allows to transfer certain properties from groups to racks.

\begin{proposition}\label{p:HSP}
Every subquandle and every quotient of a nilpotent (resp. solvable) quandle is nilpotent (resp. solvable). 
The direct product of finitely many nilpotent (resp. solvable) quandles is nilpotent (resp. solvable).
\end{proposition}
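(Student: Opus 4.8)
The plan is to reduce every assertion to the corresponding fact about the displacement group, via Theorem \ref{t:nilpotent,solvable}: a quandle is nilpotent (resp. solvable) if and only if $\dis(Q)$ is a nilpotent (resp. solvable) group. I will use freely that nilpotence and solvability of groups pass to subgroups and to quotients, and that a direct product of finitely many nilpotent (resp. solvable) groups is again nilpotent (resp. solvable).

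\emph{Quotients.} A quotient $Q/\alpha$ of a quandle is again a quandle, and the restriction of $\pi_\alpha$ to $\dis(Q)$ is a surjective group homomorphism onto $\dis(Q/\alpha)$ (Section \ref{s:2}). Hence $\dis(Q/\alpha)$ is a homomorphic image of $\dis(Q)$; if the latter is nilpotent (resp. solvable), so is the former, and Theorem \ref{t:nilpotent,solvable} applies.

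\emph{Subquandles.} Let $S\le Q$ be a subquandle. Since $S$ is closed under both $*$ and $\ld$, the subgroup $D=\langle L_a^Q (L_b^Q)^{-1}:\ a,b\in S\rangle\le\dis(Q)$ maps $S$ into itself, so restriction to $S$ defines a group homomorphism $D\to\Sym(S)$. Its image is exactly $\dis(S)$, because $L_a^Q|_S=L_a^S$ for $a\in S$, and the restrictions of the generators of $D$ are precisely the generators $L_a^S(L_b^S)^{-1}$ of $\dis(S)$. Thus $\dis(S)$ is a quotient of the subgroup $D$ of $\dis(Q)$, and again nilpotence (resp. solvability) is inherited; apply Theorem \ref{t:nilpotent,solvable}.

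\emph{Direct products.} By induction it suffices to treat $Q=Q_1\times Q_2$. Left translations act coordinatewise, $L_{(a,b)}=(L_a,L_b)$, so $\dis(Q_1\times Q_2)$ embeds into $\dis(Q_1)\times\dis(Q_2)$; a subgroup of a product of two nilpotent (resp. solvable) groups is nilpotent (resp. solvable), so $\dis(Q_1\times Q_2)$ is, whenever $\dis(Q_1)$ and $\dis(Q_2)$ both are, and Theorem \ref{t:nilpotent,solvable} concludes. The only point requiring any care is the subquandle case, namely verifying that the chosen generators preserve $S$ (using closure of $S$ under $\ld$) and that their restrictions generate all of $\dis(S)$; the rest is a direct transcription of standard group theory through Theorem \ref{t:nilpotent,solvable}.
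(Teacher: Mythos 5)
Your proposal is correct and follows essentially the same route as the paper: quotients via the surjectivity of $\pi_\alpha$ on displacement groups, subquandles via restricting the subgroup $\langle L_aL_b^{-1}: a,b\in S\rangle$ of $\dis(Q)$ onto $\dis(S)$, and products via embedding $\dis(\prod Q_i)$ into $\prod\dis(Q_i)$, all combined with Theorem \ref{t:nilpotent,solvable}. The only cosmetic difference is that the paper handles the $n$-fold product in one step using the kernels of the canonical projections, whereas you induct on two factors.
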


\begin{proof}
Let $Q$ be a nilpotent (resp. solvable) quandle. By Theorem \ref{t:nilpotent,solvable}, the group $\dis(Q)$ is nilpotent (resp. solvable).

Consider a subquandle $S$ of $Q$, and let $H=\langle L_aL_b^{-1}:\ a,b\in S\rangle\leq\dis(Q)$. Then $H$ is nilpotent (resp. solvable), and $H\to\dis(S)$, $h\mapsto h|_S$, is a surjective group homomorphism. Hence $\dis(S)$ is nilpotent (resp. solvable), and so is $S$ by Theorem \ref{t:nilpotent,solvable}. 

Consider a congruence $\alpha$ of $Q$. Then $\pi_\alpha:\dis(Q)\to\dis(Q/\alpha)$ is a surjective group homomorphism. Hence $\dis(Q/\alpha)$ is nilpotent (resp. solvable), and so is $Q/\alpha$ by Theorem \ref{t:nilpotent,solvable}.

Let $\setof{Q_i}{1\leq i\leq n}$ be a set of quandles, $Q=\prod_{i=1}^n Q_i$ and let $\alpha_i$ be the kernel of the canonical mapping $Q\longrightarrow Q_i$. Then $\bigcap_{i=1}^n\dis^{\alpha_i}=1$ and so the group homomorphism
$$\dis(Q)\longrightarrow \prod_{i=1}^n \dis(Q_i), \quad h\mapsto (\pi_{\alpha_1}(h),\ldots , \pi_{\alpha_n}(h))$$
is injective. Using again Theorem \ref{t:nilpotent,solvable}, Iif the quandles $\setof{Q_i}{1\leq i\leq n}$ are nilpotent (resp. solvable), then so it is $\prod_{i=1}^n \dis(Q_i)$. Hence $\dis(Q)$ is nilpotent (resp. solvable) and so it is $Q$.
\end{proof}

The following fact was proved in Bianco's PhD thesis \cite{Bianco}. For reader's convenience, we include his proof.

\begin{proposition}\cite[Corollary 5.2]{Bianco} \label{p:p-group}
Let $Q$ be a connected rack of prime power size $p^k$. Then $\dis(Q)$ is a $p$-group. 
\end{proposition}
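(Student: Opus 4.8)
The plan is to exploit the transitive action of $\dis(Q)$ on the set $Q$ of size $p^k$ and argue that every prime $q \neq p$ dividing $|\dis(Q)|$ leads to a contradiction. First I would fix such a prime $q$ and a Sylow $q$-subgroup $S \leq \dis(Q)$. Since $\dis(Q)$ acts transitively on $Q$, and $|Q| = p^k$ is coprime to $q$, a counting argument on orbits of $S$ shows that $S$ has a fixed point: indeed, every $S$-orbit has $q$-power size, the orbit sizes sum to $p^k$, so at least one orbit is a singleton, say $S \leq \dis(Q)_a$ for some $a \in Q$. The key structural input I would then invoke is that in a \emph{connected} rack the point stabilizers $\dis(Q)_a$ are all conjugate, hence the fixed-point set of $S$ being non-empty is a uniform phenomenon across the orbit; more importantly, I want to leverage the way stabilizers interact with the generators $L_aL_b^{-1}$.

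The core of the argument should be the following: I would show that if $S$ fixes the point $a$, then $S$ is actually normalized by enough of $\dis(Q)$ — or better, that $S$ must be trivial. The cleanest route is probably to use that $\dis(Q)$ is generated by the elements $L_aL_b^{-1}$, which satisfy $L_{a*x} = L_a L_x L_a^{-1}$, so conjugation by $L_a$ permutes the generating set; combined with transitivity this tends to force the Sylow $q$-subgroup to be normal (since its fixed-point set, an $S$-invariant and in fact $\dis(Q)_a$-related subset, is forced to be all of $Q$ by a connectivity/minimality argument), and a normal $q$-subgroup acting on a set of $p$-power size with a fixed point must fix every point, hence act trivially, hence be trivial. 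Therefore no such prime $q$ divides $|\dis(Q)|$, and $\dis(Q)$ is a $p$-group.

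The main obstacle I anticipate is making the jump from "$S$ has a fixed point" to "$S$ is trivial" rigorous: a transitive group on a set of coprime order can certainly contain nontrivial $q$-subgroups with fixed points in general, so connectivity of the rack (not just transitivity of the action) must be used in an essential way — presumably through the conjugacy relation $L_{h(a)} = h L_a h^{-1}$ together with the fact that $\dis(Q)$ is generated by the $L_aL_b^{-1}$, which ties the stabilizer $\dis(Q)_a$ tightly to the global group structure. A likely clean formulation: the subgroup generated by all $\dis(Q)$-conjugates of $S$ is a normal $q$-subgroup; if it is nontrivial it acts on $Q$ with all orbits of $q$-power size summing to $p^k$, forcing a global fixed point, and a normal subgroup fixing one point of a transitive action fixes all of them, contradiction unless it is trivial — whence $S = 1$. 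I would need to double-check that the "normal closure of a Sylow $q$-subgroup is a $q$-group" step is valid here, which it is not in general; so the actual argument must instead produce a \emph{single} normal $q$-subgroup directly, likely by showing the fixed-point set of $S$ is a union of blocks or is $\dis(Q)$-invariant via the generator structure, and this delicate point is where I expect to spend the most care.
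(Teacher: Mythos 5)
There is a genuine gap, and you have in fact located it yourself: the passage from ``a Sylow $q$-subgroup $S$ has a fixed point'' to ``$S$ is trivial'' is never established, and none of the repairs you sketch can work. The normal closure of $S$ need not be a $q$-group (as you concede), and the fixed-point set of $S$ is not $\dis(Q)$-invariant: $g(\mathrm{Fix}(S))=\mathrm{Fix}(gSg^{-1})$, which coincides with $\mathrm{Fix}(S)$ only when $g$ normalizes $S$, and connectedness gives no control over $N_{\dis(Q)}(S)$. More fundamentally, no argument that works only at the level of ``$\dis(Q)$ acts transitively on a set of size $p^k$ and is generated by conjugate elements'' can succeed, because the conclusion fails for general transitive groups on $p^k$ points (for instance $\mathrm{PSL}(2,7)$ acts transitively on $8$ points and is simple, hence generated by any nontrivial conjugacy class). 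The statement is a genuinely deep fact, not a soft consequence of orbit counting.

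The paper's proof goes through an entirely different, citation-based route. For a connected rack, $C=\{L_a:a\in Q\}$ is a single conjugacy class of $G=\lmlt(Q)$ generating $G$, and $|C|$ divides $|Q|=p^k$, hence is a power of $p$. The key input is Theorem A.2 of \cite{EGS}: a finite group generated by a conjugacy class of prime power size $p^n$ is cyclic modulo its $p$-core $O_p(G)$. Hence $G'\leq O_p(G)$ is a $p$-group, and $\dis(Q)=G'$ in the connected case. This external theorem, which itself rests on substantial group theory, is precisely the bridge your proposal is missing; the constraint it exploits --- that the generating conjugacy class has $p$-power \emph{size} --- is exactly the piece of rack-specific structure that your permutation-theoretic sketch never uses, and without an input of comparable strength the elementary approach cannot be completed.
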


\begin{proof}
According to \cite[Theorem A.2]{EGS}, if $G$ is a finite group, $C$ a conjugacy class of prime power size $p^n$ and $G=\langle C\rangle$, then $G/O_p(G)$ is cyclic, where $O_p(G)$ denotes the $p$-core of $G$. 
In particular, set $G=\lmlt(Q)$ and $C=\{L_a:a\in Q\}$. By \cite[Lemma 1.29]{AG}, $|C|$ divides $|Q|$, hence it is a prime power, and thus $G/O_p(G)$ is cyclic. Therefore, $G'\leq O_p(G)$ is a $p$-group. According to \cite[Proposition 3.2]{HSV}, $G'=\dis(Q)$ for connected quandles. 
\end{proof}

An immediate consequence of Proposition \ref{p:p-group} and Theorem \ref{t:nilpotent,solvable} is that connected racks of prime power size are nilpotent. Now, we are ready to prove the third main result, Theorem \ref{t:nilpotent_prime_decomposition}, about the primary decomposition of nilpotent quandles.

\begin{proof}[Proof of Theorem \ref{t:nilpotent_prime_decomposition}]
$(\Leftarrow)$ Connected quandles of prime power size are nilpotent, and their product is nilpotent, too, by Proposition \ref{p:HSP}.

$(\Rightarrow)$
The proof is based on the minimal representation of connected quandles, see \cite[Section 4]{HSV} for all undefined notions.
According to \cite[Theorem 7.1]{J} or \cite[Proposition 3.5]{HSV}, every connected quandle $Q$ is isomorphic to the coset quandle $\Q_{\mathrm{Hom}}(\dis(Q),\dis(Q)_e,\widehat L_e)$ where $e\in Q$ is chosen arbitrarily and $\widehat L_e$ denotes the inner automorphism given by $L_e$. Since $Q$ is faithful, $\dis(Q)_e=\mathrm{Fix}(\widehat{L}_e)$, the set of fixed points of $\widehat L_e$: indeed, for $g\in\dis(Q)$, we have $L_eg=gL_e$ if and only if $e*g(a)=g(e*a)=g(e)*g(a)$ for every $a\in Q$, which is equivalent to $L_e=L_{g(e)}$, which in turn is equivalent to $g(e)=e$ by faithfulness. 

Now, if $Q$ is nilpotent, then so is $\dis(Q)$, and it decomposes as the direct product of its Sylow subgroups, $\dis(Q)\simeq\prod S_p$, where $S_p$ is the $p$-Sylow subgroup of $\dis(Q)$. Sylow subgroups are invariant under automorphisms of $\dis(Q)$, therefore $\widehat L_e$ decomposes as the product of the restrictions of $\widehat{L_e}$ to the Sylow subgroup, and the subgroup $\mathrm{Fix}(\widehat L_e)$ decomposes, too. Therefore, 
\[ Q\simeq \mathcal{Q}_{\mathrm{Hom}}(\prod S_p,\prod \mathrm{Fix}(\widehat{L}_e|_{S_p}), \prod \widehat{L}_e|_{S_p}) \simeq \prod \mathcal{Q}_{\mathrm{Hom}}( S_p, \mathrm{Fix}(\widehat{L}_e|_{S_p}), \widehat{L}_e|_{S_p}), \]
which is a product of connected quandles of prime power size.
\end{proof}

\section{Abelian and central extensions}\label{s:7}

\subsection{Constructing extensions}

Let $Q$ be a left quasigroup, $A$ an abelian group and $\phi,\psi,\theta$ mappings
\begin{eqnarray}\label{eq:ext}
\phi: Q\times Q \to \End A, \quad \psi: Q\times Q \to  \Aut A,\quad \theta: Q\times Q \to A.
\end{eqnarray}
Define an operation on the set $Q\times A$ by
\[ (a,s)*(b,t)=(a*b,\phi_{a,b}(s)+\psi_{a,b}(t)+\theta_{a,b}), \]
for every $a,b\in Q$ and $s,t\in A$. The resulting left quasigroup
\[ Q\times_{\phi,\psi,\theta} A=(Q\times A,*,\ld) \]
will be called an \emph{abelian extension} of $Q$ by the triple $(\phi,\psi,\theta)$.
If $\phi,\psi$ are constant mappings, we will call it a \emph{central extension}.
The mapping $Q\times_{\phi,\psi,\theta} A \to Q$, $(a,s)\mapsto a$, is a homomorphism, called \emph{canonical projection}.
(Our terminology is justified by Propositions \ref{p:abelian_ext}, \ref{p:central_ext_rep} and Remark \ref{r:ext_ua}.)

\begin{lemma}\label{l:ext}
Let $Q$ be a rack, $A$ an abelian group and $\psi,\phi,\theta$ as in \eqref{eq:ext}. Then the abelian extension $E=Q\times_{\phi,\psi,\theta} A$ is a rack if and only if
\begin{align}
\label{eq:ext1} \psi_{a,b*c}(\theta_{b,c})+\theta_{a,b*c} & = \psi_{a*b,a*c}(\theta_{a,c})+\phi_{a*b,a*c}(\theta_{a,b})+\theta_{a*b,a*c},\\
\label{eq:ext2} \psi_{a,b*c}\psi_{b,c} & = \psi_{a*b,a*c}\psi_{a,c},\\ 
\label{eq:ext3} \psi_{a,b*c}\phi_{b,c} & =  \phi_{a*b,a*c}\psi_{a,b},\\ 
\label{eq:ext4} \phi_{a,b*c} &= \phi_{a*b,a*c}\phi_{a,b}+\psi_{a*b,a*c}\phi_{a,c}
\end{align}
for every $a,b,c\in Q$. The extension $E$ is a quandle if and only if, additionally, $Q$ is a quandle and
\begin{eqnarray}\label{eq:ext_qua}
\theta_{a,a}=0 \quad\text{ and }\quad \phi_{a,a}+\psi_{a,a}=1.
\end{eqnarray}
\end{lemma}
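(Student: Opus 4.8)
The statement is an ``if and only if'' characterizing when the abelian extension $E=Q\times_{\phi,\psi,\theta}A$ is a rack (resp. a quandle). Since $E$ is by construction a left quasigroup with bijective left translations (the left division exists because $\psi_{a,b}\in\Aut A$), the only thing to verify for the rack case is left self-distributivity \eqref{LD}: for all $(a,s),(b,t),(c,u)\in E$,
\[
(a,s)*\bigl((b,t)*(c,u)\bigr)=\bigl((a,s)*(b,t)\bigr)*\bigl((a,s)*(c,u)\bigr).
\]
The plan is simply to expand both sides using the definition of the operation on $Q\times A$, compare the first coordinate (which gives exactly left self-distributivity in $Q$, i.e.\ the hypothesis that $Q$ is a rack), and then compare the second coordinate, which is an identity in the abelian group $A$ that is $\Z$-linear in the three variables $s,t,u$. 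Collecting the coefficient of $s$, the coefficient of $t$, the coefficient of $u$, and the constant term (the part not involving $s,t,u$) yields exactly the four equations \eqref{eq:ext4}, \eqref{eq:ext3}, \eqref{eq:ext2}, \eqref{eq:ext1} respectively. Conversely, if those four equations hold for all $a,b,c$, the two second coordinates agree for all $s,t,u$, so $E$ is left self-distributive, hence a rack.

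\textbf{Carrying out the expansion.} First I would compute the left-hand side: $(b,t)*(c,u)=(b*c,\ \phi_{b,c}(t)+\psi_{b,c}(u)+\theta_{b,c})$, and then
\[
(a,s)*\bigl((b,t)*(c,u)\bigr)=\Bigl(a*(b*c),\ \phi_{a,b*c}(s)+\psi_{a,b*c}\bigl(\phi_{b,c}(t)+\psi_{b,c}(u)+\theta_{b,c}\bigr)+\theta_{a,b*c}\Bigr).
\]
Then the right-hand side: $(a,s)*(b,t)=(a*b,\ \phi_{a,b}(s)+\psi_{a,b}(t)+\theta_{a,b})$ and $(a,s)*(c,u)=(a*c,\ \phi_{a,c}(s)+\psi_{a,c}(u)+\theta_{a,c})$, so their product is
\[
\Bigl((a*b)*(a*c),\ \phi_{a*b,a*c}\bigl(\phi_{a,b}(s)+\psi_{a,b}(t)+\theta_{a,b}\bigr)+\psi_{a*b,a*c}\bigl(\phi_{a,c}(s)+\psi_{a,c}(u)+\theta_{a,c}\bigr)+\theta_{a*b,a*c}\Bigr).
\]
Now I match coordinates. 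The first coordinates agree precisely because $Q$ is a rack. In the second coordinate, using that all the $\phi$'s and $\psi$'s are additive endomorphisms, I read off: the coefficient of $s$ gives $\phi_{a,b*c}=\phi_{a*b,a*c}\phi_{a,b}+\psi_{a*b,a*c}\phi_{a,c}$, which is \eqref{eq:ext4}; the coefficient of $t$ gives $\psi_{a,b*c}\phi_{b,c}=\phi_{a*b,a*c}\psi_{a,b}$, which is \eqref{eq:ext3}; the coefficient of $u$ gives $\psi_{a,b*c}\psi_{b,c}=\psi_{a*b,a*c}\psi_{a,c}$, which is \eqref{eq:ext2}; and the remaining (constant) terms give $\psi_{a,b*c}(\theta_{b,c})+\theta_{a,b*c}=\phi_{a*b,a*c}(\theta_{a,b})+\psi_{a*b,a*c}(\theta_{a,c})+\theta_{a*b,a*c}$, which is \eqref{eq:ext1}. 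Since $A$ is abelian and the four pieces are independent (they are coefficients of distinct free generators $s,t,u$ and the constant, and these can be chosen independently, e.g.\ setting some to $0$), equality of the second coordinates for all $s,t,u$ is equivalent to the conjunction of \eqref{eq:ext1}--\eqref{eq:ext4}.

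\textbf{The quandle case.} For the second claim, $E$ is a quandle iff it is a rack and idempotent, i.e.\ $(a,s)*(a,s)=(a,s)$ for all $a\in Q$, $s\in A$. The first coordinate of $(a,s)*(a,s)$ is $a*a$, so idempotence of $E$ forces $a*a=a$, i.e.\ $Q$ is a quandle. Given that, the second coordinate is $\phi_{a,a}(s)+\psi_{a,a}(s)+\theta_{a,a}$, and requiring this to equal $s$ for all $s\in A$ is, by the same ``constant term plus linear term'' splitting (set $s=0$ to get $\theta_{a,a}=0$, then the linear part gives $\phi_{a,a}+\psi_{a,a}=1$), equivalent to \eqref{eq:ext_qua}. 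I would also note that once $Q$ is a quandle and \eqref{eq:ext_qua} holds, equations \eqref{eq:ext1}--\eqref{eq:ext4} are still exactly the rack conditions, so no interaction between the two sets of conditions needs to be checked.

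\textbf{Main obstacle.} There is no real conceptual obstacle; the content is a bookkeeping computation. The one point requiring a little care is the ``independence'' argument in the equivalence: one must justify that equality of the two second coordinates \emph{for all} $s,t,u\in A$ really does decouple into the four separate identities. This is clean because each side is of the form (a $\Z$-linear map applied to $s$) $+$ (a $\Z$-linear map applied to $t$) $+$ (a $\Z$-linear map applied to $u$) $+$ (a constant in $A$); taking $s=t=u=0$ isolates the constant terms, and then taking one variable nonzero at a time (and using that these are \emph{maps}, so agreeing on all inputs means the maps are equal) isolates each linear coefficient. I would state this explicitly but briefly rather than belabor it.
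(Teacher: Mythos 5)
Your proposal is correct and is exactly the ``straightforward computation'' that the paper's proof alludes to: expand both sides of left self-distributivity in $E$, match the coefficients of $s$, $t$, $u$ and the constant term to obtain \eqref{eq:ext4}, \eqref{eq:ext3}, \eqref{eq:ext2}, \eqref{eq:ext1} respectively, and handle the quandle case via idempotence. The decoupling argument (set variables to $0$ one at a time) and the observation that bijectivity of left translations is automatic from $\psi_{a,b}\in\Aut{A}$ are both sound.
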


\begin{proof}
Straightforward computation (see also \cite[Section 2]{AG}).
\end{proof}

The concept of abelian extensions for racks appeared in \cite[Section 2.3]{AG} in the following terminology. A triple $(A,\phi,\psi)$ satisfying \eqref{eq:ext2}, \eqref{eq:ext3}, \eqref{eq:ext4} is called a \emph{$Q$-module}; it is called a \emph{quandle $Q$-module} if it also satisfies \eqref{eq:ext_qua}. Then, a mapping $\theta$ satisfying \eqref{eq:ext1} is called a \emph{2-cocycle} over the $Q$-module, and the extension $Q\times_{\phi,\psi,\theta} A$ is called an \emph{affine module over $Q$}. The expression 
\[ \beta_{a,b}(s,t)=\phi_{a,b}(s)+\psi_{a,b}(t)+\theta_{a,b} \]
defines a \emph{dynamical cocycle}. Therefore, the concept of abelian extensions is a special case of the general concept of \emph{extensions by dynamical cocycles}, denoted $Q\times_\beta A$, where the operation on $Q\times A$ is defined by $(a,s)*(b,t)=(a*b,\beta_{a,b}(s,t))$ for some dynamical cocycle $\beta$. Extensions by dynamical cocycles capture precisely homomorphisms with uniform kernels \cite[Corollary 2.5]{AG}. 

\begin{example}
A surjective homomorphism $E\to Q$ whose kernel is contained in $\lambda_E$ is often called shortly a \emph{covering} of $Q$ \cite{Eis}. As noted in Section \ref{ss:special_con}, the kernel of a covering is always abelian, but not necessarily central. Uniform coverings can be represented as \emph{extensions by constant cocycles} \cite[Proposition 2.11]{AG}. Central extensions with $\varphi_{a,b}=0$ and $\psi_{a,b}=1$ are special cases that were studied extensively in \cite{CS,CSV} under the name ``abelian extensions". The relation of various types of coverings to general universal algebraic concepts is the topic of our subsequent paper \cite{BS-cov}.
\end{example}

\begin{example}
The \emph{semiregular extensions} $\mathrm{Ext}(A,f,d_a:a\in A)$ from \cite{JPSZ2} are special cases of central extensions where $Q$ is a projection quandle, $\phi_{a,b}=1-f$, $\psi_{a,b}=f$ and $\theta_{a,b}=d_a-d_b$. Semiregular extensions are proved to represent abelian quandles.
\end{example}

\begin{example}
\emph{Galkin quandles} studied in \cite{CEHSY,CH} are special cases of abelian extensions where $Q=\Aff(\Z_3,-1)$, $A$ is an arbitrary abelian group, $u\in A$ and
\[ \phi_{a,b}= \begin{cases} 2 & a=b \\ -1 & a\neq b \end{cases}, \quad \psi_{a,b}=-1,\quad \theta_{a,b}=\begin{cases} u & a+2=b \\ 0 & a+2\neq b \end{cases}. \]
\end{example}

In \cite{AG,CEGS,Jack}, abelian extensions are studied from the viewpoint of cohomology theory. In \cite{CEGS}, they are used to construct knot invariants. Here we ask a different question: which surjective rack homomorphisms can be represented by abelian and central extensions? We start with an observation that the kernel of the canonical projection of an abelian (resp. central) extension is an abelian (resp. central) congruence. 

\begin{proposition}\label{p:abelian_ext}
Let $E=Q\times_{\phi,\psi,\theta} A$ be an abelian (resp. central) extension of a rack $Q$. Then the kernel of the canonical projection $E\to Q$ is an abelian (resp. central) congruence.
\end{proposition}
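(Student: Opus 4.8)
The plan is to describe the kernel $\alpha$ of the canonical projection $\pi\colon E\to Q$, $(a,s)\mapsto a$, whose blocks are the sets $\{a\}\times A$ for $a\in Q$, and then to apply Theorem~\ref{t:abelian,central}; this is legitimate since $E$ is a rack and hence an ltt left quasigroup. The computational core is an explicit description of $\dis_\alpha$. First I would record that $L_{(a,s)}(b,t)=(a*b,\,\phi_{a,b}(s)+\psi_{a,b}(t)+\theta_{a,b})$, and from this compute
\[ L_{(a,s)}L_{(a,t)}^{-1}(b,u)=\bigl(b,\; u+\phi_{a,\,a\backslash b}(s-t)\bigr). \]
Thus every generator of $\dis_\alpha$ has the form $g_f\colon(b,u)\mapsto(b,u+f(b))$ for a suitable $f\colon Q\to A$, and $g_{f_1}g_{f_2}=g_{f_1+f_2}$, so $\dis_\alpha$ embeds into the additive group $A^Q$; in particular it is abelian. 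Since $g_f$ fixes $(a,s)$ exactly when $f(a)=0$, in which case $g_f$ fixes the whole block $\{a\}\times A$ pointwise, $\dis_\alpha$ also acts $\alpha$-semiregularly on $E$. Theorem~\ref{t:abelian,central}(1) then yields that $\alpha$ is abelian, which settles the abelian case.

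For the central case, $\phi_{a,b}=\phi$ and $\psi_{a,b}=\psi$ are constant, so the function attached to each generator $L_{(a,s)}L_{(a,t)}^{-1}$ is the constant $\phi(s-t)$; hence $\dis_\alpha=\{\tau_c:c\in\phi(A)\}$ where $\tau_c(b,u)=(b,u+c)$. I would then compute a general generator of $\dis(E)$,
\[ L_{(a,s)}L_{(b,t)}^{-1}(b',u)=\bigl(a*(b\backslash b'),\; u+\phi(s-t)+\theta_{a,\,b\backslash b'}-\theta_{b,\,b\backslash b'}\bigr), \]
and observe that, because $\psi$ is constant, its second coordinate is $u$ plus a term independent of $u$. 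This ``shape'' $(b',u)\mapsto(\mu(b'),u+\nu(b'))$ is preserved under composition and inversion, so every element of $\dis(E)$ has it; any such map commutes with every $\tau_c$, giving $\dis_\alpha\le Z(\dis(E))$, and if it fixes $(a,s)$ then $\mu(a)=a$ and $\nu(a)=0$, so it fixes $\{a\}\times A$ pointwise, i.e.\ $\dis(E)$ acts $\alpha$-semiregularly. Theorem~\ref{t:abelian,central}(2) then gives that $\alpha$ is central.

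The only slightly delicate point is this normal-form bookkeeping: the conjugating factors $\psi_{a,c}\psi_{b,c}^{-1}$ occurring inside a general $L_{(a,s)}L_{(b,t)}^{-1}$ cancel precisely when $\psi$ is constant, which is why $\alpha$-semiregularity of the full group $\dis(E)$ is argued only in the central case --- in the general abelian case it is $\dis_\alpha$, not $\dis(E)$, that must act semiregularly, which is exactly what Theorem~\ref{t:abelian,central}(1) asks for. I do not expect the cocycle identities \eqref{eq:ext1}--\eqref{eq:ext4} to intervene beyond the standing hypothesis that $E$ is a rack.
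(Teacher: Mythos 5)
Your proposal is correct and follows essentially the same route as the paper: the same explicit computation of $L_{(a,s)}L_{(a,t)}^{-1}$ (and, in the central case, of a general $L_{(a,s)}L_{(b,t)}^{-1}$), followed by an appeal to Theorem~\ref{t:abelian,central} after checking commutativity/centrality of $\dis_\alpha$ and the relevant $\alpha$-semiregularity. Your ``shape'' bookkeeping $(b',u)\mapsto(\mu(b'),u+\nu(b'))$ merely makes explicit the step the paper leaves as ``a similar argument''.
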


\begin{proof}
The kernel congruence $\alpha$ is defined by $(a,s) \ \alpha \ (b,t)$ iff $a=b$.
Using Theorem \ref{t:abelian,central}, it is sufficient to prove that $\dis_\alpha$ is abelian (resp. central) and acts $\alpha$-semiregularly on $E$ (resp. $\dis(E)$ does).
It is straightforward to calculate that 
\[ L_{(a,s)}L_{(a,t)}^{-1}(c,r)=(c,\phi_{a,a\ld c}(s-t)+r).\]
We see that any two displacements $L_{(a,s)}L_{(a,t)}^{-1}$ and $L_{(b,r)}L_{(b,u)}^{-1}$ commute, and thus $\dis_\alpha$ is an abelian group. 
Let $h\in \dis_\alpha$. Then $h(c,r)=(c,x_{h,c}+r)$ where $x_{h,c}\in A$ is an element which only depends on $h$ and $c$. Hence, if $h(c,r)=(c,r)$, then $x_{h,c}=0$, and thus $h(c,s)=(c,s)$ for every $s\in A$. Therefore, $\dis_\alpha$ acts $\alpha$-semiregularly on $E$. 

If $\phi$ and $\psi$ are constant mappings, we have  
\begin{gather*}
L_{(a,s)}L_{(a,t)}^{-1}(d,r)=(d,\phi(s-t)+r),\\
L_{(b,u)}L_{(c,v)}^{-1}(d,r)=(b\cdot (c\backslash d),\phi(u-v)+\theta_{b,c\backslash d}-\theta_{c,c\backslash d}+r),
\end{gather*}
We see that these mappings commute, hence $\dis_\alpha$ is central in $\dis(Q)$. A similar argument shows that $\dis(E)$ atcs $\alpha$-semiregularly on $E$.
\end{proof}

\subsection{Representing by extensions}

Consider a surjective quandle homomorphism $f:E\to Q$. Equivalently, consider a quandle $E$ and its congruence $\alpha$, and put $Q=E/\alpha$.
We will say that $f$ (resp. $\alpha$) \emph{admits a representation} by an abelian or central extension if $E\simeq Q\times_{\phi,\psi,\theta}A$ for suitable $A,\phi,\psi,\theta$.
Under which conditions do we obtain such a representation? 

Indeed, the blocks of the kernel of $f$ (resp. $\alpha$ itself) must have equal size, i.e., the congruence must be uniform (cf. Proposition \ref{p:uniform}).
Another natural constraint was given in Proposition \ref{p:abelian_ext}. 

But there are more. One sort of troubles comes from the following fact. The blocks of an abelian extension are subquandles which are affine, all of them over the same group. The blocks of a uniform abelian congruence are subquandles which are abelian. However, this is a weaker condition: according to \cite[Theorem 2.2]{JPSZ2}, abelian quandles embed into affine quandles, but they are not necessarily affine. And even if the blocks were affine, then not necessarily over the same abelian group. 

\begin{example}
Let $R$ be an abelian quandle which is not affine (e.g., the three-element quandle with two orbits). Then the congruence $1_R$ is uniform and abelian, but it does not admit a representation by an abelian extension. This can be turned into a proper uniform congruence by taking any direct product $Q\times R$.
\end{example}

\begin{example}
Let $(Q_i,*_i,\ld_i)$, $i\in I$, be quandles with disjoint underlying sets. Put $Q=\bigcup Q_i$ and define an operation on $Q$ by
$a*b=a*_i b$ if both $a,b\in Q_i$, and $a*b=b$ otherwise; similarly for left division. It is straightforward to check that $(Q,*,\ld)$ is a quandle, and if all $Q_i$ are connected, they form the orbits of $Q$. Now, assume that all $Q_i$ are affine. Then the congruence $\O_Q$ is abelian, since $\dis_{\O_Q}$ is abelian and acts $\O_Q$-semiregularly on $Q$. But if they are affine over different groups, the congruence $\O_Q$ does not admit a representation by an abelian extension.
\end{example}

The former problem can be avoided by the assumption that the blocks are connected, since connected abelian quandles are affine by \cite[Theorem 7.3]{HSV}. For the latter problem, we need some homogenity assumption. One natural condition is that $Q/\alpha$ is connected, which forces the blocks to be isomorphic by Proposition \ref{p:uniform}.

Finite latin quandles satisfy both assumptions for every congruence. Here are other examples: the RIG library contains non-latin quandles where all subquandles are connected, {\tt SmallQuandle}(28,$k$) for $k=3,4,5,6$.

Now we prove a representation result for central congruences. Somewhat weaker assumptions are actually sufficent.

\begin{proposition}\label{p:central_ext_rep} 
Let $E$ be a quandle, and $N\in\N(E)$ central in $\dis(E)$ such that $E/\mathcal{O}_N$ is connected. Then $E$ is isomorphic to a central extension $E/\mathcal{O}_N\times_{\phi,\psi,\theta} A$ for some $\phi,\psi,\theta$.
\end{proposition}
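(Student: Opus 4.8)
The plan is to explicitly reconstruct $E$ as a central extension of $Q:=E/\mathcal O_N$, using the coset-quandle description of connected quandles together with the fact that $N$ lies in the center of $\dis(E)$. First I would fix a base point $e\in E$ and use the hypothesis that $Q$ is connected — via Proposition~\ref{p:uniform} — to see that $\mathcal O_N$ is uniform and all its blocks are pairwise isomorphic subquandles. By Lemma~\ref{abelian subgroup gives abelian cong} the congruence $\mathcal O_N$ is central, hence each block is a central (indeed abelian) subquandle, and since $\dis_{\mathcal O_N}\le N$ is central in $\dis(E)$, the block $[e]_{\mathcal O_N}$ carries the structure of (a coset quandle over) an abelian group $A$. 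Concretely, identify $A$ with the block through the regular action of $\dis_{\mathcal O_N}$ restricted to that orbit (using $\mathcal O_N$-semiregularity of $\dis_{\mathcal O_N}$, which holds since $\mathcal O_N$ is abelian): the stabilizer is trivial on the block, so $\dis_{\mathcal O_N}$ (or a suitable quotient) acts simply transitively on $[e]_{\mathcal O_N}$, giving $[e]_{\mathcal O_N}\cong A$ as a set, with $A$ abelian.

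Next I would build a set-theoretic section $s\colon Q\to E$ of the canonical projection $p\colon E\to Q$ with $s([e])=e$, and use it to transport the quandle operation. Writing each element of $E$ uniquely as (block, $A$-coordinate) via the identifications above, the operation $*$ on $E$ pushes forward to an operation on $Q\times A$ of the form
\[
(a,x)*(b,y)=\bigl(a*b,\ \phi_{a,b}(x)+\psi_{a,b}(y)+\theta_{a,b}\bigr),
\]
where $\phi_{a,b},\psi_{a,b}\in\End(A)$ and $\theta_{a,b}\in A$ are read off from how $L_{s(a)}$ acts between blocks and within the target block. The key computation is that, because $N$ is \emph{central} in $\dis(E)$, the "within-block" part of $L_{s(a)}$ — i.e. the map induced on the $A$-coordinate — does not depend on which representative of the block $a$ we chose, nor on the $A$-coordinate of the left argument in the way that would make $\phi,\psi$ vary: conjugating $L_{s(a)}$ by any element of $\dis_{\mathcal O_N}\le N$ leaves it unchanged on the relevant orbit, forcing $\phi_{a,b}$ and $\psi_{a,b}$ to be \emph{constant} in the $A$-direction, so we really get a central (not merely abelian) extension. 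Then $\psi_{a,b}$ is invertible because $L_{s(a)}$ is a bijection and the block structure is preserved, so $\psi_{a,b}\in\Aut(A)$; and Lemma~\ref{l:ext} is automatically satisfied since we started from an honest quandle.

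The main obstacle I anticipate is the well-definedness and naturality of the identification of each block with a \emph{fixed} abelian group $A$: a priori the abelian-group structure one puts on $[a]_{\mathcal O_N}$ depends on a chosen origin in that block, and one must check that the translations $L_{s(a)}$ restricted to blocks become affine maps $A\to A$ with linear parts independent of the chosen origins — this is exactly where centrality of $N$ in $\dis(E)$ (rather than mere commutativity of $\dis_{\mathcal O_N}$) is used, to kill the dependence on representatives. Connectedness of $Q=E/\mathcal O_N$ enters twice: once via Proposition~\ref{p:uniform} to guarantee the blocks are mutually isomorphic (so a single $A$ works), and once to propagate the base-point identification to all blocks by transitivity of $\lmlt(Q)$. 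Once these naturality statements are in place, the verification that the resulting data $(\phi,\psi,\theta)$ with constant $\phi,\psi$ reproduces $E$, and that $\phi$ and $\psi$ land in $\End(A)$ and $\Aut(A)$ respectively, is a routine unwinding of definitions, and by construction the canonical projection $E/\mathcal O_N\times_{\phi,\psi,\theta}A\to E/\mathcal O_N$ corresponds to $p$, so the isomorphism $E\cong E/\mathcal O_N\times_{\phi,\psi,\theta}A$ is as required.
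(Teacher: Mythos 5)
Your plan follows essentially the same route as the paper's proof: fix a base point $e$, endow the block $[e]_{\mathcal O_N}$ with an abelian group structure via a (semiregular, hence free-on-blocks) action, transport it to all blocks using connectedness of $E/\mathcal O_N$, read off the dynamical cocycle from a transversal, and invoke centrality of $N$ to force $\phi$ and $\psi$ to be constant. One detail to correct when writing it up: the group acting simply transitively on $[e]_{\mathcal O_N}$ is $N$ itself (the blocks are by definition the $N$-orbits), not $\dis_{\mathcal O_N}$, which is merely contained in $N$ and may have strictly smaller orbits; accordingly the paper defines $a+b=f(b)$ for an arbitrary $f\in\dis(E)$ with $f(e)=a$, with well-definedness coming from $\mathcal O_N$-semiregularity of all of $\dis(E)$ (a consequence of centrality of the congruence $\mathcal O_N$, not just abelianness), and commutativity coming from commutativity of $N$.
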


\begin{proof}
Denote $\alpha=\mathcal{O}_N$.
According to Lemma \ref{abelian subgroup gives abelian cong}, $\alpha$ is a central congruence, and so $\dis_\alpha$ is central and $\dis(E)$ atcs $\alpha$-semiregularly on $E$ by Theorem \ref{t:abelian,central}.

Pick $e\in E$. We define group operations on $[e]$ in the following way: for $a,b\in[e]$, take any $f\in \dis(Q)$ such that $f(e)=a$, and let $a+b=f(b)$ and $-a=f^{-1}(e)$. The operations are well defined, since $\dis(E)$ acts $\alpha$-semiregularly: hence, if $f_1(e)=a=f_2(e)$, then $f_1$ and $f_2$ coincide on $[e]$, and thus $f_1(b)=f_2(b)$ and $f_1^{-1}(e)=f_2^{-1}(e)$. 

First, observe that $A=([e],+,-,e)$ is an abelian group. For associativity, take $a=f(e)$, $b=g(e)$ and $c=h(e)$ with $f,g,h\in\dis(E)$ and calculate 
\[ a+(b+c)=f(b+c)=f(g(c))=f(g(h(e)))=f(g(e))+h(e)=f(b)+c=(a+b)+c.\]
For commutativity, take $a=f(e)$ and $b=g(e)$ with $f,g\in\dis_\alpha$, and use commutativity of $\dis_\alpha$ to calculate 
\[ a+b=f(b)=f(g(e))=g(f(e))=g(a)=b+a.\] 
Next, observe that $L_e$ is its automorphism of $A$: for $a=f(e)$ and $b=g(e)$, we calculate
\[ L_e(a)+L_e(b)=L_efL_e^{-1}(e)+L_egL_e^{-1}(e)=L_efL_e^{-1}L_egL_e^{-1}(e)=L_e(fg(e))=L_e(a+b).\]
Finally, the subquandle $[e]$ is equal to $\aff(A,L_e)$: for $a=f(e)$ and $b=g(e)$ with $f,g\in N$, we calculate 
\[ a-L_e(a)+L_e(b)=f(e)-L_efL_e^{-1}(e)+L_egL_e^{-1}(e)=fL_ef^{-1}L_e^{-1}L_egL_e^{-1}(e)=L_{f(e)}g(e)=a*b.\] 

For every block $[a]$ of $\alpha$, consider $h_{[a]}\in\dis(E)$ such that $h_{[a]}$ maps the block $[a]$ into the block $[e]$ (such mappings exist due to connectedness of $E/\alpha$). Now, consider a dynamical cocycle 
\[ \beta_{[a],[b]}(s,t)=h_{[a*b]}L_{h^{-1}_{[a]}(s)}h_{[b]}^{-1}(t). \] 
We will show that 
\[ \varphi:E\to E/\alpha\times_\beta A,\qquad a\mapsto([a],h_{[a]}(a)) \]
is an isomorphism. It is bijective, because the mappings $h_{[a]}$ are bijective on every block (cf. the proof of Proposition \ref{p:uniform}). For $a,b\in E$, we have 
\begin{align*}
\varphi(a)*\varphi(b)=([a],h_{[a]}(a))*([b],h_{[b]}(b)) &= ([a*b],\beta_{[a],[b]}(h_{[a]}(a),h_{[b]}(b))) \\
&=([a*b],h_{[a*b]}L_{h^{-1}_{[a]}(h_{[a]}(a))}h_{[b]}^{-1}(h_{[b]}(b))) \\
&=([a*b],h_{[a*b]}L_a(b))=\varphi(a*b).
\end{align*}

Now, set 
\[ \theta_{[a],[b]}=\beta_{[a],[b]}(e,e) \quad\text{and}\quad \psi_{[a],[b]}(t)=-\beta_{[a],[b]}(e,e)+\beta_{[a],[b]}(e,t). \]
We prove that the mappings $\psi$ actually do not depend on $a,b$. For $t=f(e)$ with $f\in\dis(Q)$, expand 
\[ \beta_{[a],[b]}(e,t)=h_{[a*b]}L_{h^{-1}_{[a]}(e)}h_{[b]}^{-1}(f(e))=h_{[a*b]}h^{-1}_{[a]}L_e h_{[a]}h_{[b]}^{-1}fL_e^{-1}(e) \]
in order to obtain a mapping from $\dis(Q)$ acting on $e$, and calculate
\begin{align*}
\psi_{[a],[b]}(t)&=-\beta_{[a],[b]}(e,e)+\beta_{[a],[b]}(e,t) \\
&=\left(h_{[a*b]}h^{-1}_{[a]}L_e h_{[a]}h_{[b]}^{-1}L_e^{-1}\right)^{-1}\left(h_{[a*b]}h^{-1}_{[a]}L_e h_{[a]}h_{[b]}^{-1}fL_e^{-1}\right)(e) \\
&=L_efL_e^{-1}(e)=L_e(t).
\end{align*}
In particular, $\psi=\psi_{[a],[b]}=L_e$ is an automorphism of $A$ (as proved earlier).

Finally, we show that $\beta_{[a],[b]}(s,t)=(1-\psi)(s)+\psi(t)+\theta_{[a],[b]}$, thus completing the proof that $E$ is isomorphic to a central extension. Again, for $s=f(e)$ and $t=g(e)$ with $f,g\in N$, we calculate
\begin{align*}
(1-\psi)(s)+\psi(t)+\theta_{[a],[b]}
&= f(e) - L_efL_e^{-1}(e) + L_egL_e^{-1}(e) + h_{[a*b]}h^{-1}_{[a]}L_eh_{[a]}h_{[b]}^{-1}L_e^{-1}(e) \\
&= f(L_ef^{-1}L_e^{-1})(L_egL_e^{-1})(h_{[a*b]}h^{-1}_{[a]}L_eh_{[a]}h_{[b]}^{-1}L_e^{-1})(e) \\
&= L_{f(e)}\underbrace{g}_{\in N}\underbrace{L_e^{-1}h_{[a*b]}h^{-1}_{[a]}L_e}_{\in\dis(E)}\underbrace{h_{[a]}h_{[b]}^{-1}}_{\in\dis(E)}(e) \\
&= \underbrace{L_{f(e)}L_e^{-1}}_{\in\dis_\alpha}  \underbrace{h_{[a*b]}h^{-1}_{[a]}}_{\in\dis(E)} L_e h_{[a]}h_{[b]}^{-1}g(e) \\
&= h_{[a*b]}h^{-1}_{[a]} L_{f(e)}L_e^{-1} L_e h_{[a]}h_{[b]}^{-1}g(e) 
= h_{[a*b]}h^{-1}_{[a]} L_s h_{[a]}h_{[b]}^{-1}(t)= \beta_{[a],[b]}(s,t)
\end{align*}
(in the second step, note that all mappings are in $\dis(Q)$; later, we used centrality of $N$ and of $\dis_\alpha$).
\end{proof}

\begin{corollary}
Let $E$ be a quandle and $\alpha$ its central congruence such that $E/\alpha$ is connected and $\dis_\alpha$ acts transitively on every block of $\alpha$. Then $E$ is isomorphic to a central extension $E/\alpha\times_{\phi,\psi,\theta} A$.
\end{corollary}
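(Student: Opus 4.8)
The plan is to derive this statement from Proposition~\ref{p:central_ext_rep} by choosing $N=\dis_\alpha$. First I would verify that $N$ satisfies the hypotheses of that proposition. By definition $\dis_\alpha$ is a normal subgroup of $\lmlt(E)$, and since $\alpha$ is a central congruence, Theorem~\ref{t:abelian,central}(2) tells us that $\dis_\alpha$ is central in $\dis(E)$; in particular $\dis_\alpha\le\dis(E)$, so $N\in\N(E)$ and $N$ is central in $\dis(E)$, as required.

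Next I would identify the orbit congruence $\mathcal{O}_N$. On one hand $\dis_\alpha\le\dis^\alpha$ (as observed after Lemma~\ref{l:words in Dis}), so by Lemma~\ref{l:kernel} every element of $\dis_\alpha$ moves each point within its own $\alpha$-block; hence $\mathcal{O}_N\le\alpha$. On the other hand, the hypothesis that $\dis_\alpha$ acts transitively on every block of $\alpha$ gives the reverse inclusion $\alpha\le\mathcal{O}_N$. Therefore $\mathcal{O}_N=\alpha$, and in particular $E/\mathcal{O}_N=E/\alpha$ is connected by assumption. Now Proposition~\ref{p:central_ext_rep} applies and yields an isomorphism $E\simeq (E/\mathcal{O}_N)\times_{\phi,\psi,\theta}A=E/\alpha\times_{\phi,\psi,\theta}A$ for suitable $\phi,\psi,\theta$, which is exactly the desired conclusion.

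There is no genuine obstacle here: the corollary is a direct specialization of Proposition~\ref{p:central_ext_rep}. The only point that needs a moment's care is the identity $\mathcal{O}_{\dis_\alpha}=\alpha$, which combines the transitivity hypothesis with the general containment $\dis_\alpha\le\dis^\alpha$ and Lemma~\ref{l:kernel}.
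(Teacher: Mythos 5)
Your proposal is correct and follows exactly the paper's argument: apply Proposition~\ref{p:central_ext_rep} with $N=\dis_\alpha$, using Theorem~\ref{t:abelian,central} for centrality of $\dis_\alpha$ in $\dis(E)$ and combining the containment $\mathcal{O}_{\dis_\alpha}\le\alpha$ (the paper cites Proposition~\ref{p:dis_alpha2}(3), you use Lemma~\ref{l:kernel} directly --- same content) with the transitivity hypothesis to get $\mathcal{O}_{\dis_\alpha}=\alpha$. No gaps.
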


\begin{proof}
Apply Proposition \ref{p:central_ext_rep} to $N=\dis_\alpha$. Indeed, $\dis_\alpha\in\N(E)$ is central in $\dis(Q)$ by Theorem \ref{t:abelian,central}, and $\alpha=\mathcal{O}_{\dis_\alpha}$ by Proposition \ref{p:dis_alpha2}(3) and the transitivity condition.
\end{proof}


\begin{remark}\label{r:ext_ua}
Proposition \ref{p:central_ext_rep} applies to any central congruence of any latin quandle.  This has been known for a long time in the setting of quasigroup theory.
Our definition of central extensions is a special case of the general definition from \cite[Section 7]{FM} for arbitrary algebraic structures. If $A$ is an algebraic structure in a congruence modular variety, then any central congruence of $A$ admits a representation by a central extension \cite[Proposition 7.1]{FM}. In particular, this applies to any quasigroup $(Q,*,\ld,/)$; however, the right division operation is essential, congruences must be central with respect to terms in all three operations. Therefore, the two results are equivalent only in the finite case. We also refer to a similar result for loops \cite[Theorem 4.2]{SV2}.
\end{remark}

There is no direct analogy of Proposition \ref{p:central_ext_rep} for abelian congruences, not even for finite latin quandles. 

\begin{example}\label{e:abelian_ext_rep}
The latin quandles {\tt SmallQuandle}(28,11) and (28,12) in the RIG library have an abelian non-central congruence which does not admit a representation by an abelian extension. In both cases, the blocks are affine quandles over the group $\Z_7$, and the factor is the four-element latin quandle $Q_4$, whose multiplication table is below:
\begin{center}
\begin{tabular}{|c c c c |}
\hline
 1 & 3 & 4 & 2\\
 4 & 2 & 1& 3 \\
 2 & 4& 3& 1 \\
 3 & 1& 2& 4  \\\hline
\end{tabular}
\end{center}

Consider a quandle $Q_4$-module over the group $\Z_7$, i.e., $\phi:Q_4^2\to\End{\Z_7}$, $\psi:Q_4^2\to\Aut{\Z_7}$ satisfying equations \eqref{eq:ext2}, \eqref{eq:ext3}, \eqref{eq:ext4}. 
Let $\beta$ be the dynamical cocycle given by $\phi,\psi$ and an arbitrary admissible $\theta$. Let $\gamma_a=\psi_{a/1,1}^{-1}=\psi_{1*a,1}^{-1}$ and 
\[ \tilde\beta_{a,b}(s,t)=\underbrace{\gamma_{a\ast b}\phi_{a,b}\gamma_{a}^{-1}}_{\nu_{a,b}}(s)+\underbrace{\gamma_{a\ast b}\psi_{a,b}\gamma_{b}^{-1}}_{\varepsilon_{a,b}}(t)+\gamma_{a\ast b}(\theta_{a,b}). \]
According to \cite[Definition 2.6]{AG}, $\beta$ and $\tilde\beta$ are cohomologous, hence the corresponding extensions are isomorphic.
We will show that $\nu,\varepsilon$ are constant mappings, and therefore cannot represent a non-central congruence.

Obviously, $\varepsilon_{a,1}=\varepsilon_{1,1}$ for every $a\in Q_4$. Using the cocycle conditions and abelianness of $\aut{\mathbb{Z}_7}$, it is straightforward to verify that $\varepsilon_{1,1}=\varepsilon_{a,a}=\varepsilon_{1,a}$, $\varepsilon_{1\ast a,1\ast b}=\varepsilon_{a,b}$ and that $\nu_{1\ast a,1\ast b}=\nu_{a,b}$ for every $a,b\in Q_4$. For example, setting $b=c$ in \eqref{eq:ext2}, we obtain $\varepsilon_{a*b,a*b}\varepsilon_{a,b}=\varepsilon_{a,b}\varepsilon_{b,b}$, cancel $\varepsilon_{a,b}$ thanks to commutativity and use connectedness of $Q_4$ to conclude that all diagonal entries are equal. The other cases are proved similarly.
So, accordingly,
\[
\nu=\begin{bmatrix}
1-\lambda&\nu_0&\nu_0&\nu_0\\
       \nu_1&1-\lambda& \nu_2& \nu_3\\
       \nu_1&\nu_3&1-\lambda&\nu_2\\
       \nu_1&\nu_2& \nu_3& 1-\lambda
\end{bmatrix},
\quad \quad
\varepsilon=\begin{bmatrix}
\lambda &\lambda&\lambda&\lambda\\
        \lambda&\lambda& k & l\\
        \lambda& l &\lambda& k \\
        \lambda& k & l& \lambda
\end{bmatrix},
\]
for some $k,l,\lambda\in \aut{\mathbb{Z}_7}$ and $\nu_i\in \mathrm{End}(\mathbb{Z}_7)$ for $i=0,\dots,3$.
We are left with seven parameters, so it becomes feasible to set a computer search over all options, checking the cocycle conditions for each choice. Over the group $\Z_7$, all solutions satisfy $k=l=\lambda$ and $\nu_i=1-\lambda$ for all $i$, hence both $\nu,\varepsilon$ are constant. 
\end{example}

\section{Applications}\label{s:8}

\subsection{Non-existence results}

As an application of the commutator theory, we will show a few non-existence results. As an ingredient, we will use a part of the classification of finite simple quandles \cite{AG,Joyce-simple}: a finite simple abelian quandle is affine of prime power size (this is essentially the contents of \cite[Theorem 3.9]{AG}).


\begin{theorem}\label{t:2^k}\
\begin{enumerate}
	\item There is no connected involutory quandle of size $2^k$, for any $k\geq1$. 
	\item There is no connected involutory rack of size $2^k$, for any $k>1$. 
\end{enumerate}
\end{theorem}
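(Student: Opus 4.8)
\emph{Plan.} The plan is to reduce both parts to the structural results developed above (especially Theorem \ref{t:nilpotent,solvable} and Proposition \ref{p:p-group}) together with the cited classification of finite simple quandles, and then to finish with a one-line computation in characteristic $2$.

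\emph{Part (1).} Suppose $Q$ is a connected involutory quandle with $|Q|=2^k$, $k\ge1$; I aim at a contradiction. First I would apply Proposition \ref{p:p-group} to get that $\dis(Q)$ is a $2$-group, hence nilpotent, and then Theorem \ref{t:nilpotent,solvable}(2) to conclude that $Q$ itself is nilpotent. Since $\Con(Q)$ is finite and nontrivial, pick a coatom $\alpha$ and put $S=Q/\alpha$; it is simple, and as a quotient of $Q$ it is connected, nilpotent (Proposition \ref{p:HSP}), and involutory (the law $x*(x*y)=y$ passes to quotients). A simple nilpotent quandle is abelian: since $\Con(S)=\{0_S,1_S\}$, the nilpotence chain of $S$, once repetitions are dropped, is just $0_S<1_S$, so $1_S$ is central. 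Moreover $\alpha$ is uniform by Proposition \ref{p:uniform} (as $S$ is connected), so $|S|$ divides $2^k$ and $|S|=2^j$ with $j\ge1$. Now the classification of finite simple quandles \cite[Theorem 3.9]{AG} — a finite simple abelian quandle is affine of prime power size — gives $S\cong\Aff(\Z_2^j,f)$ for some $f\in\Aut{\Z_2^j}$.

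\emph{Ruling out $S$.} It remains to see that no such $f$ exists. Connectedness of $\Aff(\Z_2^j,f)$ forces $1-f$ to be invertible: the displacement group is the group of translations by the subspace $(1-f)(\Z_2^j)$, and it must act transitively on $\Z_2^j$. Involutivity forces $f^2=\mathrm{id}$, since $a*(a*b)=a+f^2(b-a)$ in an affine quandle. But in characteristic $2$, $f^2=\mathrm{id}$ gives $(1-f)^2=1-2f+f^2=0$, so $1-f$ is nilpotent and cannot be invertible on $\Z_2^j$ unless $j=0$ — contradicting $j\ge1$. This proves (1).

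\emph{Part (2).} Let $Q$ be a connected involutory rack with $|Q|=2^k$, $k>1$, and split on the Cayley kernel $\lambda_Q$. If $\lambda_Q\ne1_Q$, then $\bar Q:=Q/\lambda_Q$ has more than one element; being faithful it is a quandle, and as a quotient of $Q$ it is connected and involutory. Connectedness makes $\lambda_Q$ uniform (Proposition \ref{p:uniform}), so $|\bar Q|=2^j$ with $1\le j\le k$, and part (1) applied to $\bar Q$ gives a contradiction. If instead $\lambda_Q=1_Q$, then all left translations of $Q$ coincide, so $Q$ is a permutation rack, $a*b=\sigma(b)$ for a fixed permutation $\sigma$; the involutory law gives $\sigma^2=\mathrm{id}$, while connectedness says $\langle\sigma\rangle=\lmlt(Q)$ acts transitively on $Q$ — impossible for a group of order $\le2$ acting on a set of size $2^k\ge4$. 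Most of the work is bookkeeping through the Galois connection and Theorem \ref{t:nilpotent,solvable}; the two substantive moves are the passage to a simple abelian quotient (where nilpotence is essential) and the characteristic-$2$ observation that an involution $f$ makes $1-f$ nilpotent. I expect the only genuinely ``extra'' point to be the degenerate case $\lambda_Q=1_Q$ in part (2), where the rack machinery does not directly apply and one falls back on the elementary transitivity argument above.
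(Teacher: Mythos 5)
Your argument is correct and, for part (1), follows the paper's route almost exactly: Proposition \ref{p:p-group} plus Theorem \ref{t:nilpotent,solvable} give nilpotence, a simple (hence abelian, by your coatom argument) quotient of $2$-power order is extracted, and the classification of simple abelian quandles reduces everything to an affine quandle over a $2$-group. You diverge only at the endgame: the paper observes that affine quandles are latin and that a latin involutory quandle has odd order (each $L_a$ is an involution with exactly one fixed point), whereas you compute that connectedness forces $1-f$ to be invertible while $f^2=1$ forces $(1-f)^2=2(1-f)=0$. Both work; note only that your identity $(1-f)^2=0$ uses that the underlying group is elementary abelian (which the cited classification does supply), and over a general abelian $2$-group one still gets $(1-f)^2=2(1-f)$, hence nilpotence of $1-f$, so the argument is robust. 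For part (2) you quotient by the Cayley kernel $\lambda_Q$ where the paper quotients by the smallest congruence with quandle quotient; since the latter is contained in $\lambda_Q$, the two reductions are essentially the same and both terminate in the permutation-rack case. One justification in your part (2) is wrong, though harmlessly so: $Q/\lambda_Q$ need not be faithful --- $L_{[a]}=L_{[b]}$ in the quotient only says that $L_b^{-1}L_a$ centralizes $\lmlt(Q)$, and e.g.\ the Cayley quotient of $\Aff(\Z_4,-1)$ is a two-element projection quandle --- so ``being faithful it is a quandle'' is not a valid step. The correct reason that $Q/\lambda_Q$ is a quandle is the rack identity $L_{a*a}=L_a$, which gives $a*a\;\lambda_Q\;a$; with that substitution your proof is complete.
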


\begin{proof}
(1) Let $Q$ be a connected involutory quandle of size $2^k$. According to Proposition \ref{p:p-group}, $\dis(Q)$ is a 2-group, hence nilpotent, and thus $Q$ is nilpotent by Theorem \ref{t:nilpotent,solvable}. Therefore, it has a simple abelian factor $Q/\alpha$, and thanks to Proposition \ref{p:uniform}, it has size $2^l$, $l\leq k$. Finite simple abelian quandles are affine, and thus latin. But there is no latin involutory quandle of even order, because left translations in latin quandles have precisely one fixed point.

(2) Consider the the smallest congruence $\alpha$ such that the factor is a quandle. It is uniform by Proposition \ref{p:uniform}, hence $|Q/\alpha|=2^k$, which is impossible unless $k=0$. Hence $\alpha=1_Q$ and $Q$ must be a permutation rack. But the only connected involutory permutation rack has two elements.
\end{proof}

Theorem \ref{t:stein} was originally proved by Stein \cite[Theorem 9.9]{SteS} in 1950s using a topological argument: from a graph of the corresponding latin square, he constructed a triangulated polyhedron, and discussed the parity of its Euler characteristic. In \cite[Theorem 6.1]{Gal}, Galkin proved Stein's theorem using a shorter group-theoretical argument about the minimal representation. Our theory allows a direct inductive proof.

\begin{theorem}\label{t:stein}\cite{SteS}
There is no latin quandle of size $\equiv 2\pmod 4$.
\end{theorem}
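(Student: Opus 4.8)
The plan is to argue by induction on the size of a putative latin quandle $Q$ with $|Q| \equiv 2 \pmod 4$, mimicking the structure of the proof of Theorem~\ref{t:2^k}(1) but tracking the $2$-part of the order through a nilpotent series. First I would invoke Corollary~\ref{Latin are solv}: a finite latin quandle is solvable, so if $Q$ is not already abelian it has a proper congruence $\alpha$ with $Q/\alpha$ nontrivial. By Proposition~\ref{p:uniform} the congruence $\alpha$ is uniform (the factor of a latin quandle is latin, hence connected), so $|Q| = |Q/\alpha| \cdot |[a]_\alpha|$ for any block. The blocks $[a]_\alpha$ are latin subquandles of $Q$, and $Q/\alpha$ is a latin quandle as well, both of strictly smaller size than $Q$.

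The key numerical observation is that $2 \bmod 4$ means $|Q| = 2m$ with $m$ odd, i.e. the $2$-part of $|Q|$ is exactly $2$. Since $|Q| = |Q/\alpha|\cdot|[a]_\alpha|$, one of the two factors is even (with $2$-part exactly $2$) and the other is odd. If the factor $Q/\alpha$ has odd order, it is a latin quandle of odd order, which exists in abundance (e.g. affine over an odd abelian group), so no contradiction comes from that side directly — instead the even block $[a]_\alpha$ is a latin quandle of size $\equiv 2 \pmod 4$ and strictly smaller, and we are done by induction. Symmetrically, if the block has odd order then $Q/\alpha$ is a smaller latin quandle of size $\equiv 2 \pmod 4$ and induction again applies. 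So the only case that must be handled by hand is the base case: $Q$ is \emph{simple}. Here I would use the congruence $\lambda_Q$: since $Q$ is simple, either $\lambda_Q = 0_Q$ (so $Q$ is faithful) or $\lambda_Q = 1_Q$; the latter forces $Q$ to be a projection quandle, which is not latin unless $|Q|=1$. So $Q$ is a finite simple faithful latin quandle. Being latin it is connected and (by Corollary~\ref{Latin are solv}) solvable, hence has an abelian congruence; simplicity forces $1_Q$ itself to be abelian. Then by the classification ingredient quoted in Section~\ref{s:8} (a finite simple abelian quandle is affine of prime power size), $Q$ is affine of prime power order $p^k$; being of even order forces $p = 2$, hence $|Q| = 2^k$, which is $\equiv 2 \pmod 4$ only when $k=1$, i.e. $|Q|=2$. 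But the two-element latin quandle does not exist (left translations of a latin quandle have a unique fixed point, so on two elements $L_a$ would be the identity, making it a projection quandle, which is not latin). This contradiction completes the base case.

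The induction thus reduces the whole statement to: every finite latin quandle of size $\equiv 2 \pmod 4$ that is not simple has a proper uniform congruence, and splitting the order $2m$ ($m$ odd) forces one of the two smaller latin quandles to again have order $\equiv 2 \pmod 4$ — which is immediate from the fact that the odd number $m$ cannot absorb the single factor of $2$. The main obstacle I anticipate is purely bookkeeping: making sure the ``smaller latin quandle'' really is latin (blocks of $\alpha$ are subquandles by Proposition~\ref{p:uniform}, and a subquandle of a latin quandle need not a priori be latin, so one should instead choose $\alpha$ to be a \emph{minimal} nonzero congruence and use that minimal quotients in the solvable series are abelian, hence affine, hence latin — or, more cleanly, use that any factor of a latin quandle is latin and induct downward along a minimal congruence, so that it is $Q/\alpha$ rather than a block that we recurse on). I would set up the induction so that at each step we pass to the factor $Q/\alpha$ for $\alpha$ a minimal nontrivial congruence: then $Q/\alpha$ is latin of size $|Q|/|[a]_\alpha|$, and since $|[a]_\alpha|$ divides the odd part $m$ (it cannot be even, else $[a]_\alpha$ — a subquandle with an abelian-congruence-induced affine structure as a minimal block — would be an even latin quandle of smaller size and we recurse there instead), the factor $Q/\alpha$ retains order $\equiv 2\pmod 4$, and strict descent yields the contradiction against the base case.
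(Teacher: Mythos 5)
Your proposal follows essentially the same route as the paper: take a minimal (or inductively smallest) counterexample, dispose of the simple case via solvability (Corollary~\ref{Latin are solv}) and the fact that finite simple abelian quandles are affine of prime power size, and otherwise split $|Q|=mn$ along a uniform congruence (Proposition~\ref{p:uniform}) and observe that exactly one of $m,n$ is again $\equiv 2\pmod 4$. The one place where your write-up goes astray is precisely the ``main obstacle'' you flag: your proposed fixes for the block-latinness issue do not work. A minimal abelian congruence block need not be affine (the paper itself notes in Section~\ref{s:7} that abelian quandles embed into affine ones but need not be affine), and affine quandles need not be latin (e.g.\ projection quandles are affine), so the chain ``abelian $\Rightarrow$ affine $\Rightarrow$ latin'' fails at both links; moreover the scheme ``recurse only on factors'' cannot by itself rule out the case where the odd part sits in the factor and the $2$-part sits in the block. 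The correct and much simpler resolution is that a subquandle $S$ of a \emph{finite} latin quandle is automatically latin: $S$ is closed under $*$, so each right translation $R_a$ with $a\in S$ restricts to an injective self-map of the finite set $S$, hence a bijection. With that observation (which the paper uses tacitly), your argument closes up and coincides with the paper's proof.
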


\begin{proof}
Let $Q$ be the smallest latin quandle of size $\equiv 2\pmod 4$. If $Q$ was simple, then it was abelian thanks to Corollary \ref{Latin are solv}, hence affine of prime power order; but no prime power is $\equiv 2\pmod 4$ with the exception of $2^1$, but there is no latin quandle of order 2, contradiction. So $Q$ has a non-trivial congruence $\alpha$ which is uniform by Proposition \ref{p:uniform}. Let $m$ denote the size of its blocks and $n$ the size of its factor. Then $|Q|=m\cdot n\equiv 2\pmod 4$, hence either $m$ or $n$ is $\equiv 2\pmod 4$, and this contradicts that $Q$ was the smallest with this property.
\end{proof}

\subsection{Coloring knots and links}

Quandle coloring is a powerful invariant of knot (and link) equivalence, particularly from the computational perspective \cite{CESY,FLS}. Coloring by affine quandles is related to the Alexander invariant: the main result of \cite{Bae} states that a link is colorable by an affine quandle if and only if its Alexander polynomial does not vanish. We extend the theorem to solvable quandles. The following lemma is essentially \cite[Lemma 1]{FLS}.

\begin{lemma}\label{l:simple coloring}
Let $c$ be a non-trivial coloring of a link $L$ by a quandle $Q$, and assume that $\mathrm{Im}(c)$ generates $Q$. Then $L$ is colorable by every simple factor of $Q$.
\end{lemma}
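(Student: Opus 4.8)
The plan is to transport the coloring along the canonical projection onto a simple factor and to argue that non-triviality survives. Fix a simple factor $Q/\alpha$ of $Q$; thus $\alpha$ is a maximal proper congruence of $Q$, and $Q/\alpha$ has at least two elements. Let $\pi_\alpha\colon Q\to Q/\alpha$ be the canonical projection.

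First, I observe that $\pi_\alpha\circ c$ is a coloring of $L$ by $Q/\alpha$. Recall that a coloring assigns to each arc of a diagram of $L$ an element of the quandle so that at every crossing the colors of the two underarcs and the overarc are related by $*$ (or by $\backslash$, depending on the sign of the crossing). Since $\pi_\alpha$ is a quandle homomorphism, it commutes with $*$ and $\backslash$, so composing each arc-color with $\pi_\alpha$ preserves all these crossing relations; hence $\pi_\alpha\circ c$ is again a valid coloring.

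Second, I check that $\pi_\alpha\circ c$ is non-trivial. Its set of used colors is $\pi_\alpha(\mathrm{Im}(c))$, and since $\pi_\alpha$ is a surjective homomorphism it carries any generating set of $Q$ to a generating set of $Q/\alpha$; by hypothesis $\mathrm{Im}(c)$ generates $Q$, so $\pi_\alpha(\mathrm{Im}(c))$ generates $Q/\alpha$. As $Q/\alpha$ has more than one element and no single-element subset of a quandle generates more than itself, the set $\pi_\alpha(\mathrm{Im}(c))$ must contain at least two elements; therefore $\pi_\alpha\circ c$ uses at least two colors and is not constant. This produces a non-trivial coloring of $L$ by $Q/\alpha$, and since $Q/\alpha$ was an arbitrary simple factor, the claim follows.

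The argument is essentially bookkeeping — pushing structure forward along a surjective homomorphism — and presents no genuine difficulty; the only step worth isolating is the preservation of non-triviality, which is exactly where the hypothesis that $\mathrm{Im}(c)$ generates $Q$ (as opposed to the weaker assumption that $c$ is merely non-constant) is used, in tandem with the fact that a simple quandle has at least two elements.
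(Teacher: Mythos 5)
Your proof is correct and follows essentially the same route as the paper: push the coloring forward along the canonical projection and use the hypothesis that $\mathrm{Im}(c)$ generates $Q$ together with idempotence to rule out constancy. The paper phrases the last step contrapositively (a constant $\pi\circ c$ would force all colors of $c$ into a single congruence block, which is a subquandle and hence all of $Q$), whereas you note that a singleton subquandle generates only itself; these are the same observation.
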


\begin{proof}
Consider any simple factor $R=Q/\alpha$, and take the composition $c'=\pi\circ c$ where $\pi$ is the natural projection $Q\to R$. Then $c'$ is a coloring of $L$ by $R$. If $c'$ was trivial, then all colors used by $c$ were in one block, $B$, of $\alpha$. Since congruence blocks are subquandles and $Q$ is generated by $\mathrm{Im}(c)$, we have $B=Q$, hence $\alpha=1_Q$, which is a contradiction.
\end{proof}

\begin{theorem}\label{t:coloring}
Let $L$ be a link with trivial Alexander polynomial, and $Q$ be a finite connected solvable quandle. Then $L$ is not colorable by $Q$.
\end{theorem}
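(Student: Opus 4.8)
The plan is to argue by contradiction and reduce everything to a single simple factor. Suppose $L$ admits a non-trivial coloring $c$ by $Q$. The first move is to replace $Q$ by the subquandle $Q_0=\langle\mathrm{Im}(c)\rangle$ generated by the colors actually used: then $c$, regarded as a coloring by $Q_0$, is still non-trivial (its image has at least two elements, so in particular $|Q_0|\ge 2$), and now $\mathrm{Im}(c)$ generates $Q_0$, which is exactly the hypothesis needed to apply Lemma \ref{l:simple coloring}. By Proposition \ref{p:HSP}, $Q_0$ is again a finite solvable quandle, since solvability passes to subquandles.

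Next I would extract a \emph{simple abelian} factor of $Q_0$. As $\Con(Q_0)$ is a finite lattice with $0_{Q_0}\neq 1_{Q_0}$, it has a coatom $\alpha$, so $R:=Q_0/\alpha$ is a finite simple quandle with $|R|\ge 2$, and $R$ is solvable by Proposition \ref{p:HSP}. Fix a solvability chain $0_R=\beta_0\leq\beta_1\leq\dots\leq\beta_n=1_R$; since $\Con(R)=\{0_R,1_R\}$, the chain must contain a step jumping from $0_R$ to $1_R$, so $1_R=\beta_{j+1}/\beta_j$ is an abelian congruence of $R/\beta_j\cong R$, i.e. $R$ is abelian. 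Invoking the classification fact recalled just before Theorem \ref{t:2^k} (a finite simple abelian quandle is affine, of prime power size), $R$ is affine. Now Lemma \ref{l:simple coloring} produces a non-trivial coloring of $L$ by the affine quandle $R$, and the main result of \cite{Bae} then forces the Alexander polynomial of $L$ to be non-trivial, contradicting the hypothesis; hence $L$ is not colorable by $Q$.

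I do not expect a genuine obstacle: the heavy lifting is already contained in Proposition \ref{p:HSP} (solvability is inherited by subquandles and quotients) and in the structure theory of finite simple quandles. The two points that require a little care are (i) securing the ``$\mathrm{Im}(c)$ generates'' hypothesis of Lemma \ref{l:simple coloring}, which is the reason one must first descend to $Q_0$ rather than argue directly in $Q$; and (ii) the collapse of a solvability chain in a simple algebra to a single abelian step, which is what upgrades ``simple and solvable'' to ``simple and abelian'', hence ``affine''. I would also remark that connectedness of $Q$ is not used in this argument — it works for any finite solvable quandle — although I will keep the statement as given in the theorem.
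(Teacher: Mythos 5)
Your proposal is correct and follows essentially the same route as the paper's proof: pass to the subquandle generated by $\mathrm{Im}(c)$, use Proposition \ref{p:HSP} to keep solvability, apply Lemma \ref{l:simple coloring} to a simple factor, observe that a simple solvable quandle is abelian hence affine, and contradict Bae's theorem. You merely make explicit two details the paper leaves implicit (the coatom extraction of a simple factor and the collapse of the solvability chain), and your side remark that connectedness is not actually needed is consistent with the paper's argument.
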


\begin{proof}
Let $c$ be a non-trivial coloring and consider the subquandle $S$ generated by $\mathrm{Im}(c)$. Proposition \ref{p:HSP} implies that $S$ is also solvable, and Lemma \ref{l:simple coloring} says that $L$ is colorable by every simple factor of $S$. However, all simple factors of a solvable quandle are abelian, hence affine. This contradicts \cite[Theorem 1.2]{Bae} which says that links with trivial Alexander polynomial admit no non-trivial coloring by an affine quandle.
\end{proof}

In particular, Corollary \ref{Latin are solv} implies that links with trivial Alexander polynomial are not colorable by any finite latin quandle. (It agrees with the data calculated in \cite{CESY}.)

\subsection{Bruck loops}
This subsection is written for loop theory specialists, so we omit explaining the details of the definitions and facts, that can be found in \cite{Bruck}.

Recall that, in loop theory, the classical notion of central nilpotence is equivalent to nilpotence in the sense of universal algebra, but the classical notion of solvability is strictly weaker than universal algebraic solvability \cite{SV1}. 
In 1960's, Glauberman proved that Bruck loops of odd order are solvable in the weak sense \cite[Theorem 14]{G2}, that Bruck loops of prime power order $p^k$, $p\neq2$, are nilpotent \cite[Theorem 7]{G1}, and with some effort, one can deduce from the results of \cite{G1,G2} the converse, that nilpotent Bruck loops of odd order are isomorphic to a direct product of loops of prime power order.

There is a strong link between the theory of latin quandles, and the theory of Bruck loops. There is a polynomial equivalence between the variety of latin quandles, and the variety of so called \emph{Belousov-Onoi modules}, which consist of a Belousov-Onoi loop and its automorphism (see \cite{BO}, or \cite[Section 5.1]{Sta-latin}). Polynomial equivalence preserves all properties defined by polynomial operations, such as congruences, the centralizing relation $C(\alpha,\beta;\delta)$, and subsequently the notions of abelianness, solvability, etc. (see \cite[Section 4.8]{Bergman} for details). Therefore, our Corollary \ref{Latin are solv} and Theorem \ref{t:nilpotent_prime_decomposition} imply that Belousov-Onoi loops are solvable in the stronger sense, and that a finite Belousov-Onoi loop is nilpotent if and only if it decomposes to a direct product of loops of prime power size. 

In the Belousov-Onoi correspondence, involutory latin quandles correspond to uniquely 2-divisible Bruck loops \cite[Theorem 5.9]{Sta-latin}. Equivalently, in the finite case, to Bruck loops of odd order. Therefore, our theory strengthens the Glauberman's solvability theorem, and provides an alternative and complete proof of the prime decomposition theorem.

\begin{corollary}\label{c:bruck}\
\begin{enumerate}
	\item Bruck loops of odd order are solvable (in the stronger sense of universal algebra).
	\item A Bruck loop of odd order is nilpotent if and only if it is isomorphic to a direct product of Bruck loops of prime power order.
\end{enumerate}
\end{corollary}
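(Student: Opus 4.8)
The plan is to transport the results already established for finite latin quandles across the Belousov--Onoi correspondence. The ingredients are: Corollary~\ref{Latin are solv} (finite latin quandles are solvable) together with the prime decomposition theorem of Section~\ref{s:nilpotent_prime_decomposition}, in particular Theorem~\ref{t:nilpotent_prime_decomposition} in the form valid for latin quandles; the polynomial equivalence between the variety of latin quandles and the variety of Belousov--Onoi modules, under which involutory latin quandles correspond exactly to uniquely $2$-divisible Bruck loops, hence in the finite case to Bruck loops of odd order \cite{Sta-latin,BO}; and the standard fact that polynomial equivalence preserves the congruence lattice, the centralizing relation $C(\alpha,\beta;\delta)$, and therefore abelianness, centrality, solvability and nilpotence \cite[Section~4.8]{Bergman}. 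One further remark lets us pass from a Belousov--Onoi module $(B,\cdot,\varphi)$ to its loop reduct $(B,\cdot)$: every congruence of the module is a congruence of the loop, and a centralizing relation holding for all module terms holds a fortiori for the fewer loop terms, so a central (resp.\ abelian) series of the module is automatically one of the loop; since on a finite Bruck loop of odd order the distinguished automorphism $\varphi$ is a polynomial operation of $(B,\cdot)$, the module and the loop are in fact polynomially equivalent, and the transfer of solvability and nilpotence is lossless in both directions.

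For part~(1), let $B$ be a Bruck loop of odd order and $Q$ the corresponding finite involutory latin quandle. By Corollary~\ref{Latin are solv}, $Q$ is solvable; by the polynomial equivalence the Belousov--Onoi module of $B$ is solvable, and hence so is $B$ itself. This is Glauberman's theorem \cite{G2}, now in the stronger universal-algebraic sense.

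For part~(2), the direction $(\Leftarrow)$ follows because the correspondence, being an equivalence of varieties, commutes with finite direct products: a product $\prod_i B_i$ of Bruck loops of prime power orders $p_i^{k_i}$ corresponds to the product quandle $\prod_i Q_i$, where each $Q_i$ is connected of size $p_i^{k_i}$ with $p_i$ odd, hence $\dis(Q_i)$ is a $p_i$-group by Proposition~\ref{p:p-group} and $Q_i$ is nilpotent by Theorem~\ref{t:nilpotent,solvable}; thus $\prod_i Q_i$ is nilpotent by Proposition~\ref{p:HSP}, and nilpotence transfers back to $\prod_i B_i$. For $(\Rightarrow)$, if $B$ is nilpotent then so are its module and $Q$; the prime decomposition theorem gives $Q\cong\prod_i Q_i$ with each $Q_i$ connected of prime power size, and translating this decomposition through the correspondence and taking loop reducts yields $B\cong\prod_i B_i$, where $B_i$ is the Bruck loop associated with $Q_i$ and has order $|Q_i|$, a prime power.

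The only genuine obstacle I expect is the bookkeeping around the correspondence: checking that the Belousov--Onoi functor commutes with finite direct products (so the quandle decomposition really descends to a loop decomposition) and that passing from a Belousov--Onoi module to its loop reduct preserves, and reflects, nilpotence and solvability --- both handled by the polynomial-equivalence remark in the first paragraph. No new structural input beyond Corollary~\ref{Latin are solv}, Theorems~\ref{t:nilpotent,solvable} and~\ref{t:nilpotent_prime_decomposition}, and Proposition~\ref{p:p-group} is required.
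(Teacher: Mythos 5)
Your proposal is correct and follows essentially the same route as the paper: the paper's proof is a one-line application of the polynomial equivalence of \cite[Theorem 5.9]{Sta-latin} (between involutory latin quandles and uniquely $2$-divisible Bruck loops) to Corollary \ref{Latin are solv} and Theorem \ref{t:nilpotent_prime_decomposition}, exactly as you describe. Your additional remarks on the loop reduct of the Belousov--Onoi module and on compatibility with finite direct products merely spell out details the paper leaves implicit in its discussion preceding the corollary.
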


\begin{proof}
Apply the polynomial equivalence of \cite[Theorem 5.9]{Sta-latin} to Corollary \ref{Latin are solv} and Theorem \ref{t:nilpotent_prime_decomposition}.
\end{proof}

A curious reader might ask: the original Glauberman's proof of solvability involved his famous Z$^*$-theorem, one of the key steps in the classification of finite simple groups, where is it hidden in our proof of the stronger theorem? The answer is, in Stein's proof that latin quandles have solvable left multiplication groups, which uses the classification of finite simple groups.


\end{document}